\theoremstyle{plain}
\newtheorem{exam}{Example}
\numberwithin{equation}{section}
\newcommand{\spec}{{\rm Spec}\,}
\newtheorem{thm}[subsection]{Theorem}
\newtheorem{prop}[subsection]{Proposition}
\newtheorem{lema}[subsection]{Lemma}
\newtheorem{coro}[subsection]{Corollary}
\theoremstyle{definition}
\newtheorem{rema}[subsection]{Remark}
\newtheorem{defe}[subsection]{Definition}
\newenvironment{claim}[1]{\par\noindent\underline{Claim:}\space#1}{}
\newenvironment{claimproof}[1]{\par\noindent\underline{Proof:}\space#1}{\leavevmode\unskip\penalty9999 \hbox{}\nobreak\hfill\quad\hbox{$\blacksquare$}}
\title{Relative Log-Symplectic structure on a semi-stable degeneration of moduli of Higgs bundles}
\author{Sourav Das}
\address{Department of Mathematics, University of Haifa, Israel}
\email{sdas6565@gmail.com}
\date{}
\begin{document}
\maketitle
\section*{Abstract}
In a recent paper \cite{3}, a semi-stable degeneration of moduli space of Higgs bundles on a curve has been constructed. In this paper, we show that there is a relative log-symplectic form on this degeneration, whose restriction to the generic fibre is the classical symplectic form discovered by Hitchin. We compute the Poisson ranks at every point and describe the symplectic foliation on the closed fibre. We also show that the closed fibre, which is a variety with normal crossing singularities, acquires a structure of an algebraically completely integrable system.

\section{\textbf{Introduction}}
Degenerations of moduli spaces of bundles on curves have had several interesting applications usually combined with the induction on the genus of the curve. For instance, proof of Newstead-Ramanan conjecture and proof of the factorisation theorem. It is, therefore, a natural question to ask whether there is a semi-stable degeneration of the moduli space of stable Higgs bundles of rank $n(\geq 2)$ on a smooth projective curve of genus $g(\geq 2)$. Such a degeneration has been recently constructed by Balaji et al. in \cite{3}, extending the techniques of Gieseker \cite{12} and Nagaraj-Seshadri \cite{25}. We recall that the construction begins with a choice of a degeneration of the smooth curve, i.e., a flat family of curves $\mathcal X$ over a complete discrete valuation ring $S$, whose generic fibre is a smooth projective curve of genus $\geq 2$ and the closed fibre is an irreducible nodal curve with a single node. Then one can construct a flat family of varieties over the discrete valuation ring $S$ such that
\begin{enumerate}
\item the generic fibre is isomorphic to the moduli of stable Higgs bundles (vector bundles) on the generic curve,
\item the total space of the family is regular over $\mathbb C$, and the closed fibre is a normal crossing divisor,
\item the closed fibre has a modular description; namely, the objects are certain admissible \eqref{Admi2021} Higgs bundles (admissible vector bundles) on certain semi-stable models of the nodal curve.
\end{enumerate}

Let us denote the degeneration of moduli of vector bundles by $\mathcal M_{GVB,S}$ and the closed fiber by $\mathcal M_{GVB}$. Let us denote the degeneration of moduli of Higgs bundles by $\mathcal M_{GHB,S}$ and the closed fibre by $\mathcal M_{GHB}$.

In \cite{14}, Hitchin proved that the moduli space of stable Higgs bundles on a compact Riemann surface has a natural holomorphic symplectic structure. The existence of the symplectic form can be seen in the following way. The cotangent bundle of the moduli space of vector bundles is a dense open subset of the moduli of Higgs bundles. Moreover, its complement has co-dimension two. Therefore the naturally occurring symplectic form on the cotangent bundle extends to the moduli space of Higgs bundles. Later in \cite{7}, Biswas and Ramanan and in \cite{8}, Bottacin studied the algebraic version of the symplectic form on the moduli of vector bundles on a smooth projective curve. 

Let $\mathcal M_S\rightarrow S$ be a semi-stable degeneration of a holomorphic-symplectic variety. We call it a \textbf{good degeneration} if there exists a relative log-symplectic form (definition \ref{RelLogSym}) on $\mathcal M_S$, whose restriction to the generic fibre is the given symplectic form.

We began with the following question. Is the semi-stable degeneration of moduli of Higgs bundles good in the above sense? To prove this, we compute the relative log-cotangent and log-tangent space of $\mathcal M_{GVB,S}$ at a given point in terms of the first-order infinitesimal logarithmic deformations of the objects of the moduli. Then we observe that the relative log-cotangent bundle $\Omega_{\mathcal M_{GVB,S/S}}(\mathcal M_{GVB})$ of $\mathcal M_{GVB, S}$ is a dense open subset of $\mathcal M_{GHB,S}$. By generality, $\Omega_{\mathcal M_{GVB,S/S}}(\mathcal M_{GVB})$ has a relative log symplectic form. Using the explicit description of the log-tangent space, we show that there is a skew-symmetric, non-degenerate bilinear form on the relative log-tangent space at any given point. We show that this form also coincides with the classical symplectic form on the generic fibre and also with the natural relative log symplectic form on $\Omega_{\mathcal M_{GVB,S/S}}(\mathcal M_{GVB})$. We can summarise the above discussion in the following theorem from \S 5.

\begin{thm}
There is a relative logarithmic-symplectic form on $\mathcal M_{GHB, S}$, whose restriction to the generic fibre is the classical symplectic form.
\end{thm}

For any variety $Z$, let us denote its singular locus by $\partial Z$. We see that the closed fibre of our degeneration has the following natural stratification.
\begin{equation}\label{Whit}
\mathcal M_{GHB}\supset \partial \mathcal M_{GHB}\supset \partial^2 \mathcal M_{GHB}\supset \dots
\end{equation}
 The log-symplectic form induces a Poisson structure on the closed fibre and every successive singular locus. In \S6 and \S7, we compute the Poisson rank at every point of $\mathcal M_{GHB}$ and show that the stratification by Poisson ranks coincides with the stratification given by the successive singular loci. To compute the Poisson rank, we first show that every smooth stratum is isomorphic, as a Poisson scheme, to a torus-quotient of a smooth variety equipped with an equivariant symplectic form (Corollary \ref{TQ1}). Then we compute the drop in the Poisson rank because of the torus-quotient (Lemma \ref{rank}). 

\begin{thm}\label{Foliation11}
The stratification of the Poisson variety $\mathcal M_{GHB}$ given by the successive degeneracy loci of the Poisson structure is the same as the stratification given by the successive singular loci. Moreover, $\partial^r \mathcal M_{GHB}\setminus \partial^{r+1}\mathcal M_{GHB}$ is a smooth Poisson sub-variety of dimension $2(n^2(g-1)+1)-r$ with constant Poisson rank $2(n^2(g-1)+1)-2r$. In particular, the most singular locus is a smooth Poisson variety of dimension $2(n^2(g-1)+1)-n$ with constant Poisson rank $2(n^2(g-1)+1)-2n$.
\end{thm}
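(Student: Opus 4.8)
The plan is to prove the two assertions in tandem, since the coincidence of the two stratifications will fall out once the Poisson rank has been computed on each stratum. Write $N := n^2(g-1)+1$, so that the generic, and hence every, fibre of the degeneration has dimension $2N$. I would first record the purely local input: since $\mathcal M_{GHB}$ is a variety with normal crossing singularities, its singular locus $\partial \mathcal M_{GHB}$ is the locus where at least two smooth branches meet, and inductively $\partial^r \mathcal M_{GHB}$ is the locus where at least $r+1$ branches meet. Consequently each difference $\partial^r \mathcal M_{GHB}\setminus \partial^{r+1}\mathcal M_{GHB}$, the locus where exactly $r+1$ branches meet transversally, is smooth of pure codimension $r$ in the closed fibre, hence of dimension $2N-r$. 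This already yields the dimension count in the statement and, for the maximal value $r=n$, identifies the most singular locus as a smooth variety of dimension $2N-n$.

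Next I would show each stratum is a Poisson subvariety and compute its rank. That it is Poisson is forced by the local normal form of a log-symplectic form along a normal crossing divisor: the induced bivector is tangent to every intersection of branches, so the Poisson structure restricts to each singular stratum. To compute the rank I would invoke Corollary \ref{TQ1}, which presents the smooth stratum $\partial^r \mathcal M_{GHB}\setminus \partial^{r+1}\mathcal M_{GHB}$, as a Poisson scheme, as a quotient $Y/T$ where $Y$ is a smooth variety with an equivariant symplectic form and $T$ is a torus. The two numerical inputs I need are $\dim Y = 2N$ and $\dim T = r$: the first because $Y$ is, up to the torus action, the Higgs moduli attached to the relevant semistable model and so inherits the ambient dimension $2N$; the second because the $r+1$ meeting branches are glued along $r$ nodes, each contributing one $\mathbb C^{*}$ of scaling and gluing automorphisms, so that $T\cong (\mathbb C^{*})^{r}$. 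Granting these, and that the Hamiltonian $T$-action is locally free with its moment map a submersion on a dense open set, Lemma \ref{rank} computes the drop in Poisson rank under the torus quotient as $2\dim T$; hence the rank of $Y/T$ is $\dim Y - 2\dim T = 2N-2r$, constant on the stratum. This is exactly the rank claimed, and for $r=n$ gives $2N-2n$.

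Finally I would assemble the coincidence of the two stratifications. Since the Poisson rank equals $2N-2r$ on $\partial^r \mathcal M_{GHB}\setminus \partial^{r+1}\mathcal M_{GHB}$ and this value is strictly decreasing in $r$, the $r$-th degeneracy locus of the Poisson structure, namely the closed set where the rank is at most $2N-2r$, is precisely $\partial^r \mathcal M_{GHB}$. Because the induced Poisson structure on each $\partial^r \mathcal M_{GHB}$ again has constant rank on its own smooth part with degeneracy locus $\partial^{r+1}\mathcal M_{GHB}$, the successive degeneracy loci reproduce the chain \eqref{Whit} term by term, giving the asserted equality of stratifications.

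The main obstacle I anticipate is the precise identification, for each stratum, of the torus $T$ together with the verification that the symplectic variety $Y$ of Corollary \ref{TQ1} has the full dimension $2N$ and carries a genuinely Hamiltonian, generically locally free $T$-action. Matching the combinatorial depth $r$ of a normal crossing stratum, that is, the number of glued nodes in the semistable model, with the exact dimension of the gluing and scaling torus, and checking that the moment map is a submersion so that every symplectic leaf of $Y/T$ has the full dimension $2N-2r$, which gives constant rather than merely generic rank, is where the real work lies; the dimension count and the coincidence of stratifications are then formal consequences.
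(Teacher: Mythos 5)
Your proposal follows essentially the same route as the paper: Proposition \ref{Strat} supplies the smooth Poisson strata of dimension $2(n^2(g-1)+1)-r$, Corollary \ref{TQ1} presents each stratum as a free quotient of the symplectic variety $\mathcal M^{\chi,n,\epsilon,ad}_{HB,X_r}$ (of dimension $2(n^2(g-1)+1)$) by the $r$-dimensional torus $A_r=Aut(X_r/X_0)$, and Lemma \ref{rank} (the vanishing $B_r=j\circ\omega^{\#}\circ i=0$ in diagram \eqref{B}) forces the rank to drop by exactly $2r$ at every point, after which the coincidence of the two stratifications is formal. The only cosmetic remark is that the constancy of the rank follows pointwise from $B_r=0$ and does not require the separate submersivity of the moment map that you flag as the remaining work.
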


Let $\widetilde{\mathcal M_{GHB}}$ denote the normalisation of the closed fibre $\mathcal M_{GHB}$ and $\partial \widetilde{\mathcal M_{GHB}}$ denote the inverse image of $\partial \mathcal M_{GHB}$. The pullback form equips $\widetilde{\mathcal M}_{GHB}$ with a log-symplectic structure. In \cite{24}, Matviichuk et al. showed that the log-cotangent bundle of a variety with a normal-crossing divisor has many natural log-symplectic forms other than the tautological one and any such form differs by a bi-residue, called the magnetic term. Moreover, any log-symplectic manifold is stably equivalent to the log-cotangent bundle of a normal-crossing divisor. We show that all the magnetic terms of the log-symplectic form on $\widetilde{\mathcal M}_{GHB}$ are zero. As a consequence, we obtain the local normal form of the Poisson structure on $\widetilde{\mathcal M}_{GHB}$.

In section \S 8, we describe the Casimir functions of the symplectic leaves of every strata $\partial^{r,o} \mathcal M_{GHB}:=\partial^r \mathcal M_{GHB}\setminus \partial^{r+1}\mathcal M_{GHB}$ of $\mathcal M_{GHB}$. For the notation, we refer to \S8. We show that every such strata $\partial^{r,o} \mathcal M_{GHB}$ is a free torus quotient of a smooth variety equipped with an equivarient symplectic form. Moreover, the latter variety has an equivariant momentum map. In this case, the momentum map descends to the stratum $\partial^{r,o} \mathcal M_{GHB}$ because the co-adjoint action of any torus is trivial. The Casimir functions of the strata $\partial^{r,o} \mathcal M_{GHB}$ are precisely the coordinate functions of the descended map. The following is the precise statement.

\begin{thm}\label{Casimir}
\begin{enumerate}
\item The map 
\begin{equation}
\mu_r: \mathcal M^{\chi, n,\epsilon, ad}_{HB, X_r}\rightarrow (T_{A_r,e})^{\vee} 
\end{equation}
defined by 
\begin{equation}
\mu_r(\mathcal E, \phi)(X_{\psi})=\lambda(\imath(X_{\psi}))=Trace(\phi\circ \imath(X_{\psi})), ~~\text{for}~~X_{\psi}\in H^0(X_r, T_{X_r})
\end{equation}
is a momentum map, where 
\begin{enumerate}
\item $\imath: H^0(X_r, T_{X_r})\hookrightarrow \mathbb H^1(\mathcal C_{\bullet})$ denotes the differential of the orbit map $A_r\rightarrow \mathcal M^{\chi, n,\epsilon, ad}_{HB, X_r}$ at the point $(\mathcal E,\phi)$.

\item $\lambda$ denotes the symplectic potential on $\mathcal M^{\chi, n,\epsilon, ad}_{HB, X_r}$ (\ref{Liou440} and remark \ref{Liou441}).
\end{enumerate} 

\item $\mu_r(\mathcal E,\phi)=(Trace ~~\phi|_{\mathcal O_{R[r]_1}(1)^{\oplus a_1}}, \dots, Trace ~~\phi|_{\mathcal O_{R[r]_r}(1)^{\oplus a_r}})$, where $\mathcal E|_{R[r]_i}\cong \mathcal O_{R[r]_i}(1)^{\oplus a_i}\oplus \mathcal O_{R[r]_i}^{\oplus b_i}$ for every $i=1,\dots, r$. 

\item The coordinate functions of $\mu_r$ are the Casimir functions of $\mathcal M^{a_{\bullet}}_{GHB}$ \eqref{admi445}. In particular, the variety $\mu^{-1}_r(0)\cap \mathcal M^{a_{\bullet}}_{GHB}$ is a symplectic leaf of $\mathcal M^{a_{\bullet}}_{GHB}$ containing $\Omega_{\mathcal M^{a_{\bullet}}_{GVB}}$. Moreover, it consists of triples $(X_r, \mathcal E, \phi)$ such that the trace of $\phi|_{\mathcal O_{R[r]_i} (1)^{\oplus a_i}}: \mathcal O_{R[r]_i} (1)^{\oplus a_i}\rightarrow \mathcal O_{R[r]_i} (1)^{\oplus a_i}$ is zero for all $i=1,\dots , r$.
\end{enumerate}
\end{thm}

In \cite{3}, Balaji et al. showed that there exists a proper Hitchin map $h: \mathcal M_{GHB}\rightarrow B:=\oplus_{i=1}^n H^0(X_0, \omega_{X_0}^{\otimes i})$ on the moduli space of stable Gieseker-Higgs bundles. In the final section of this article, we recall the definition of an algebraically completely integrable system structure (ACIS) on a variety with normal-crossing singularities. Following the strategy of \cite{23} and \cite{22}, we prove the following.

\begin{thm}
The general fibre $h^{-1} (\xi)$ corresponding to a spectral vine curve ramified outside the nodes is Lagrangian in a symplectic leaf for the log-symplectic structure on $\mathcal M_{GHB}$. Therefore the Hitchin map $h: \mathcal M_{GHB}\rightarrow B$ is an algebraically completely integrable system (\ref{ACIS}).
\end{thm}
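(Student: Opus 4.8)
The plan is to prove the statement in two stages: first identify the general Hitchin fibre with a Jacobian-type variety attached to the spectral vine curve, and then verify that it sits as a maximal isotropic (hence Lagrangian) subvariety of a single symplectic leaf of the log-symplectic structure. First I would set up the spectral correspondence for Gieseker--Higgs bundles. A generic point $\xi \in B = \bigoplus_{i=1}^n H^0(X_0, \omega_{X_0}^{\otimes i})$ determines a spectral vine curve $X_\xi \subset \operatorname{Tot}(\omega_{X_0})$, suitably interpreted on the semistable models, and the hypothesis that $X_\xi$ is ramified outside the nodes guarantees that the spectral cover $X_\xi \to X_0$ is unramified over each node, so $X_\xi$ carries only the nodes forced by those of $X_0$ and is itself a (reducible, nodal) vine curve. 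Following the strategy of \cite{23} and \cite{22}, the fibre $h^{-1}(\xi)$ is then identified, via pushforward, with a moduli space of rank-one torsion-free sheaves of the appropriate degree on $X_\xi$, i.e.\ a generalised Jacobian. Because the ramification avoids the nodes, this generalised Jacobian is a smooth commutative group variety, an extension of an abelian variety by a torus, which supplies the required abelian-variety (torsor) structure on the general fibre.

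Next I would pin down the symplectic leaf and carry out the dimension count. By Theorem \ref{Casimir}, the Casimir functions of the leaves are the traces $\operatorname{Trace}\phi|_{\mathcal O_{R[r]_i}(1)^{\oplus a_i}}$; these are among the data recorded by the characteristic coefficients, so they are locally constant along $h^{-1}(\xi)$, and hence the fibre is contained in a single symplectic leaf $L$ of $\mathcal M_{GHB}$. Using the dimension formula of Theorem \ref{Foliation11}, a leaf meeting the open stratum $\partial^{r,o}\mathcal M_{GHB}$ has Poisson rank $2(n^2(g-1)+1) - 2r$, so a Lagrangian in $L$ has dimension $(n^2(g-1)+1) - r$. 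I would match this against the computed dimension of the generalised Jacobian of $X_\xi$, the torus rank accounting precisely for the drop by $r$; in the generic ($r=0$) case this recovers the classical half-dimension $n^2(g-1)+1$.

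To show $h^{-1}(\xi)$ is Lagrangian in $L$ it then remains to prove it is isotropic of maximal dimension. Isotropy is equivalent to the involutivity of the pulled-back coordinate functions on $B$ with respect to the Poisson bracket on $L$; adapting Hitchin's classical involution argument to the present log-symplectic setting, I would show that the tangent space to the fibre is the annihilator of $dh$ inside the tangent space of the leaf and that the log-symplectic form pairs these annihilated directions trivially, which is where the explicit description of the log-tangent space from \S5 and the momentum-map picture of Theorem \ref{Casimir} enter. Combined with the equality of dimensions from the previous step, this yields that the fibre is maximal isotropic, i.e.\ Lagrangian in $L$. Since the Hitchin map $h$ is proper by \cite{3} and its general fibres are Lagrangian abelian-variety torsors inside the symplectic leaves, definition \ref{ACIS} applies and $h$ is an algebraically completely integrable system.

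The main obstacle I anticipate is the log-symplectic and normal-crossing geometry at the nodes. Unlike the smooth Hitchin system, here one must argue leaf-by-leaf and verify that involutivity and the dimension match survive the torus-quotient structure identified in Corollary \ref{TQ1} and Lemma \ref{rank}; in particular one has to be certain that the Casimirs of Theorem \ref{Casimir} exhaust the transverse directions to $L$, so that no part of $dh$ leaks into the Poisson-degenerate directions and the fibre genuinely lies inside, and is Lagrangian for, a single leaf rather than spreading across several. Controlling the spectral correspondence for the admissible Gieseker--Higgs bundles on the semistable models near the nodes — so that the generalised Jacobian structure and its dimension are exactly as claimed — is the technical heart of the argument.
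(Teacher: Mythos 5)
Your overall route---identify the general fibre with the Picard of the spectral vine curve via the spectral correspondence, then show it is isotropic of half the dimension of its symplectic leaf, following \cite{23} and \cite{22}---is the same as the paper's. But there are two problems. First, your middle step is both unnecessary and unjustified. Definition \ref{Lagrangian} is a \emph{generic} condition: one only needs that $h^{-1}(\xi)$ lies in the closure of a leaf $S$ and that $h^{-1}(\xi)\cap S$ is Lagrangian in $S$. Taking $S=\mathcal M_{GHB}\setminus \partial\mathcal M_{GHB}$, the dense open maximal leaf, the intersection is exactly $Pic^{\delta}_{X_{\xi}}=J_{\xi}$, and no leaf-by-leaf analysis across the strata is required. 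Moreover, your justification that the fibre sits in a single leaf---that the Casimirs $\mathrm{Trace}\,\phi|_{\mathcal O_{R[r]_i}(1)^{\oplus a_i}}$ are ``among the data recorded by the characteristic coefficients''---is not correct as stated: the Casimir is the trace of the block of $\phi$ acting on the positive summand of $\mathcal E|_{R[r]_i}$, not the full trace of $\phi$ on $R[r]_i$, and it is not a function of $\xi$.

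Second, the isotropy verification, which is the technical heart of the argument, is only gestured at (``adapting Hitchin's involution argument''). The paper carries it out by pushing forward the resolution $0\to L(-R)\to f^*\mathcal E\to f^*\mathcal E\otimes \pi^*\omega_{X_0}\to L\otimes f^*\omega_{X_0}\to 0$ on $X_{\xi}$ to obtain the four-term sequence $0\to \pi_*\mathcal O_{X_{\xi}}\to \mathcal End\,\mathcal E\to \mathcal End\,\mathcal E\otimes\omega_{X_0}\to \pi_*\omega_{X_{\xi}}\to 0$, whence the short exact sequence $0\to H^1(\pi_*\mathcal O_{X_{\xi}})\to \mathbb H^1(\mathcal C_{\bullet})\to H^0(\pi_*\omega_{X_{\xi}})\to 0$ identifying $T_pJ_{\xi}\cong H^1(\pi_*\mathcal O_{X_{\xi}})$ and the normal space with $H^0(\pi_*\omega_{X_{\xi}})$. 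Lagrangian-ness then reduces to the single assertion that $\omega^{\#}$ carries $H^0(\pi_*\omega_{X_{\xi}})^*$ into $H^1(\pi_*\mathcal O_{X_{\xi}})$, i.e.\ maps the conormal space onto the tangent space of the fibre; this is checked by exhibiting an explicitly vanishing composition of morphisms of complexes, and it subsumes both your isotropy claim and your dimension count. Without supplying this computation (or an equivalent involutivity argument for the Hitchin Hamiltonians on the open leaf), the proposal remains a plan rather than a proof.
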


\subsection{Acknowledgement} I am grateful to Vikraman Balaji for explaining the paper \cite{3} and for asking this question. I thank Najmuddin Fakhruddin, Rohith Varma and Suratno Basu for several discussions and comments, which helped write this article. 

\subsection{Notation and convention}
\begin{enumerate}\label{ConNot}
\item[$\bullet$] $k:=\mathbb C$= the field of complex numbers.
\item[$\bullet$] $k[\epsilon]$:= the ring of dual numbers over $k$.
\item[$\bullet$] $\mathbb N:=$ monoid of all positive integers with respect to "$+$",
\item[$\bullet$] $\overline{\mathbb N}= \mathbb N\cup \{0\}$, monoid with respect to "$+$"
\item[$\bullet$]{Standard Log Point:} A monoid structure on $\spec k$ given by a morphism of monoids $\overline{\mathbb N}\rightarrow k$ which maps $0\mapsto 1$ and $n\mapsto 0$ for all $n\neq 0$.
\item[$\bullet$] $S:=\{\eta, \eta_0\}$ denotes the spectrum of a complete discrete valuation ring, where $\eta$ denotes the generic point and $\eta_0$ denotes the closed point.
\item[$\bullet$] $r$:=rank, $d:=$ the degree and $\chi:=$ the Euler characteristic of the vector bundles. We will assume throughout that $(r,d)=1$ or equivalently $(r, \chi)=1$.
\item[$\bullet$] $\mathcal X$ denotes a flat family of curves whose generic fibre is smooth projective and the closed fibre is a nodal curve with a single node. We denote the nodal curve by $X_0$ and the node by $x$. We denote its normalisation by $q:\tilde X_0\rightarrow X_0$ and the two preimages of the node $x$ by $\{x^+,x^{-}\}$. 
\item[$\bullet$] for a category ${F}$ fibered in groupoids over the category of schemes $\underline{{Schs}}$ and for any scheme $U$ the notation ${F}_{_{U}}$ denotes the category whose objects are $U$-spaces and the morphisms are $U$-morphisms \cite[Definition 2.1]{21}
\item[$\bullet$] for an algebraic stack $\mathcal L$ we denote by $T\mathcal L$ the tangent stack of $\mathcal L$.
\item[$\bullet$] for any scheme/algebraic stack $\mathcal L$ we denote by $\mathcal L(k)$ the set of all $\spec k$-valued points of $\mathcal L$ and by $\mathcal L(k[\epsilon])$ the underlying vector space of all $\spec k[\epsilon]$-valued points of $\mathcal L$.

\item[$\bullet$] for any local ring $\mathcal O$ we will denote by $\mathcal O^{{h}}$ the Henselization at the maximal ideal.

\end{enumerate}

\section{\textbf{Preliminaries}}

\subsection{On Moduli of Gieseker-Higgs bundles}

Let $X_0$ be a projective irreducible nodal curve of genus $g\geq 2$ with a single node $x$. Let $q:\tilde{X}_0\rightarrow X_0$ be the normalisation and $q^{-1}(x)=\{x^+, x^-\}$. 

\begin{defe} The dualising sheaf of the nodal curve $X_0$ is the kernel of the following morphism of $\mathcal O_{X_0}$-modules.
\begin{equation}
q_*\Omega_{\tilde X_0}(x^++x^-)\rightarrow \mathbb C_x, 
\end{equation}
where 
\begin{enumerate}
\item $\mathbb C_x$ denotes the sky-scraper sheaf at the point $x$.
\item the map $q_*\Omega_{\tilde X_0}(x^++x^-)\rightarrow \mathbb C_x$ is given by 
\begin{equation}\label{res908}
s\mapsto Res(s; x^+)+Res(s;x^-)
\end{equation}
\end{enumerate}
We denote it by $\omega_{X_0}$. Here, $Res(s; x)$ denotes the residue of a form $s$ at a point $x$.
\end{defe}

\begin{rema}
Notice that the fibres $\omega_{\tilde X_0}(x^++x^-)_{x^+}$ and $\omega_{\tilde X_0}(x^++x^-)_{x^-}$ can be identified with $\mathbb C$, using Poincare adjunction formula. More precisely, for any coordinate function $z^+$ around $x^+$ with $z^+(x^+)=0$, the image of $\frac{dz^+}{z^+}$ in $\omega_{\tilde X_0}(x^++x^-)_{x^+}$ is independent of the choice of the coordinate function; the above identification between $\omega_{\tilde X_0}(x^++x^-)_{x^+}$ and $\mathbb C$ sends this independent image to $1\in \mathbb C$. Similarly, at $x^-$. Therefore the map \eqref{res908} makes sense. 
\end{rema}

\begin{rema}
The dualising sheaf can be defined for any nodal curve similarly. To be more precise, let $C$ be a nodal curve and $D$ denote the set of nodes. Let $q: \tilde{C}\rightarrow C$ denote the normalisation and $\tilde D$ denote the preimage $q^{-1}(D)$. Then the dualising sheaf $\omega_C$ is the kernel of the map 
\begin{equation}
q_*\Omega_{\tilde C}(\tilde D)\rightarrow \oplus_{x\in D} \mathbb C_x,
\end{equation}
where the map is constituted out of the maps \eqref{res908} at every point $x\in D$.  
\end{rema}

\begin{defe}\label{Higgs440}
Let $C$ be a nodal curve. A Higgs bundle on $C$ is a pair $(\mathcal E, \phi)$, where
\begin{enumerate}
\item $\mathcal E$ is a vector bundle on $C$, and 
\item $\phi: \mathcal E\rightarrow \mathcal E\otimes \omega_C$ any $\mathcal O_C$-module homomorphism.
\end{enumerate} 
\end{defe}

\begin{defe}\label{gis} Let $r$ be a positive integer.
\begin{enumerate}
\item A chain of projective lines is a scheme $R[r]$ of the form
$\cup_{i=1}^{^l} R[r]_{i}$ such that 
\begin{enumerate}
\item $R[r]_{i}\cong \mathbb{P}^{^1}$, 
\item for any $i<j$, $R[r]_{i}\cap R[r]_{j}$ consists of a single point $p_j$ if $j=i+1$ and empty otherwise. 
\end{enumerate}

We call $r$ the length of the chain $R[r]$. Let us choose and fix two smooth points $p_1$ and $p_{r+1}$ on $R[r]_1$ and $R[r]_{r}$, respectively. 

\vspace{1cm}
\begin{tikzpicture}[overlay, xshift= 1cm,scale=0.70]
\draw node[yshift=-1ex]{$p_1$}(0,0) -- (3,.5) node[yshift=1ex]{}; \draw (2,.5) -- (5,0)node[yshift=-1ex]{}; \draw (4,0) -- (7,.5); \draw[dotted] (7,.25) -- (8,.25);\draw (8,0) -- node[yshift=1.5ex]{}(11,.5); \draw (10,.5) -- (13,0)node[yshift=-1ex]{}; \draw (12,0) -- (15,.5) node[yshift=1ex]{$p_{r+1}$};
\end{tikzpicture}
\vspace{1cm}

\item A Gieseker curve $X_r$ is the categorical quotient of the disjoint union of the curves $\tilde{X}_0$ and $R[r]$ obtained by identifying $x^+$ with $p_1$ and $x^-$ with $p_{r+1}$.
\end{enumerate}
\end{defe}

\begin{rema}\label{Rem101} There is a natural morphism $\pi_r: X_r\rightarrow X_0$ that contracts the chain $R[r]$ to the node $x$ and that is isomorphism outside. It is easy to see that the pullback of the dualising sheaf $\omega_{X_0}$ to a Gieseker curve $X_r$ is isomorphic to the dualising sheaf $\omega_{X_r}$ of $X_r$. The sheaf $\omega_{X_r}$ can be constructed by gluing $\omega_{\tilde X_0}(x^++x^-)$ and $\omega_{X_r}|_{R[r]}\cong \mathcal O_{R[r]}$ by the following identifications

\begin{equation}\label{IDEN}
P_{x^+}:\omega_{\tilde X_0}(x^++x^-)_{x^+}\xrightarrow{\frac{dz^+}{z^+}\mapsto \mathbb 1} \mathcal O_{R[r], p_1}~~~\hspace{3cm}~~\text{and}~~~\hspace{3cm}~~P_{x^-}:\omega_{\tilde X_0}(x^++x^-)_{x^-}\xrightarrow{\frac{dz^-}{z^-}\mapsto -\mathbb 1} \mathcal O_{R[r], p_{r+1}}
\end{equation}

Such a curve is called a semi-stable model of the stable curve $X_0$. In the literature, a semi-stable curve is also referred to as a pre-stable curve. 
\end{rema}

\subsubsection{\textbf{Choice of a degeneration of curves.}}\label{DegeCourbes} Let us choose a flat family of projective curves $\mathcal X\rightarrow S$, such that 
\begin{enumerate}
\item the generic fibre $\mathcal X_{\eta}$ is a smooth curve of genus $g\geq 2$, 
\item the closed fibre is the nodal curve $X_0$, and
\item the total space $\mathcal X$ is regular over $\spec \mathbb C$. 
\end{enumerate}

The existence of such a family follows from \cite[Theorem B.2 and Corollary B.3, Appendix B]{22}. Let us denote the relative dualising sheaf by $\omega_{\mathcal X/S}$.

Moreover, it follows from \cite[17.16.3 (ii)]{Gro II} that there exists an etale neighbourhood $S'\rightarrow S$ of $\eta_0$ (the closed point of $S$) such that the morphism $\mathcal X': \mathcal X\times_S S'\rightarrow S'$ has a section $\sigma: S'\rightarrow \mathcal X'$ which passes through smooth points of the morphism $\mathcal X'\rightarrow S'$.

\begin{defe}For every $S$-scheme $T$, a modification is a commutative diagram
\begin{equation}
\begin{tikzcd}
\mathcal X^{mod}_T\arrow{dr}{p_T}\arrow{rr}{\pi_T}&& \mathcal X_T:=\mathcal X\times_S T\arrow{dl}\\
& T
\end{tikzcd}
\end{equation}
such that
\begin{enumerate}
\item $p_T : \mathcal X^{mod}_T \rightarrow T$ is flat,
\item the horizontal morphism is finitely presented which is an isomorphism when $(\mathcal X_T )_t$ is smooth,
\item over each closed point $t\in T$ over $\eta_0\in S$, we have $(\mathcal X^{mod}_T )_t \cong X_r$ for some integer $r$ and the horizontal morphism restricts to the morphism which contracts the $\mathbb P^1$’s on $X_r$.
\end{enumerate}
\end{defe}

We will also alternatively call such modifications as Gieseker curves. We call two such modifications $\mathcal X^{mod}_T$ and $\mathcal X'^{mod}_T$ isomorphic if there exists an isomorphism $\sigma_T: \mathcal X^{mod}_T\rightarrow \mathcal X'^{mod}_T$ such that the following diagram commutes
\begin{equation}
\begin{tikzcd}
\mathcal X^{mod}_T\arrow{dr}{\pi_T} \arrow{rr}{\sigma_T} && \mathcal X'^{mod}_T\arrow{dl}{\pi'_T}\\
& \mathcal X_T
\end{tikzcd}
\end{equation}

\begin{rema}\label{prestable}
By definition, a modification is a pre-stable curve over the base $T$. From remark \ref{Rem101}, it follows that the pullback of the relative dualising sheaf of $\mathcal X_T/T$ is isomorphic to the relative dualising sheaf of $\mathcal X^{mod}_T/T$. We denote it by $\omega_{\mathcal X^{mod}_T/T}$.
\end{rema}

\begin{defe}\label{SS440}\label{Admi2021} A vector bundle $\mathcal E$ of rank $n$ on $X_r$ with $r\geq 1$ is called a Gieseker vector bundle if
 \begin{enumerate}
\item $\mathcal E|_{R[r]}$ is a strictly standard vector bundle on $X_r$, i.e., for each $i=1,\dots, r $, $\exists$ non-negative integers $a_i$ and $b_i$ such that $\mathcal E|_{R[r]_i}\cong \mathcal O^{\oplus a_i}\oplus \mathcal O(1)^{\oplus b_i}$, and 
\item the direct image $(\pi_r)_*(E)$ is a torsion-free $\mathcal O_{X_0}$-module.
\end{enumerate}
Any vector bundle on $X_0$ is called a Gieseker vector bundle. In the literature, a Gieseker vector bundle is also called an admissible vector bundle.

A Gieseker vector bundle on a modification $\mathcal X^{mod}_T$ is a vector bundle such that its restriction to each $(\mathcal X^{mod}_{T})_t$ is a Gieseker vector bundle.
\end{defe}

\begin{defe} A Gieseker–Higgs bundle on $\mathcal X^{mod}_T$ is a pair
$(\mathcal E_T , \phi_T )$, where $\mathcal E_T$ is a vector bundle on $\mathcal X^{mod}_T$, and $\phi_T : \mathcal E_T \rightarrow \mathcal E_T \otimes \omega_{\mathcal X^{mod}_T/T}$ is an $\mathcal O_{\mathcal X^{mod}_T}$ -module homomorphism satisfying the following
\begin{enumerate}
\item $\mathcal E_T$ is a Gieseker vector bundle on $\mathcal X^{mod}_T$,
\item for each closed point $t \in T$ over $\eta_0 \in S$, the direct image $(\pi_t)_*(\mathcal E_t)$ is a torsion-free sheaf on $X_0$ and $(\pi_t)_*\phi_t: (\pi_t)_*(\mathcal E_t)\rightarrow (\pi_t)_*(\mathcal E_t)\otimes \omega_{X_0}$ is an $\mathcal O_{X_0}$-module homomorphism. We refer to such a pair $((\pi_t)_*(\mathcal E_t), (\pi_t)_*\phi_t)$ as a torsion-free Higgs pair on the nodal curve $X_0$.
\end{enumerate}
\end{defe}
\begin{defe}
A Gieseker–Higgs bundle $(\mathcal E_T, \phi_T )$ is called stable if the direct image $(\pi_T)_*(\mathcal E_T, \phi_T )$ is a family
of stable torsion-free Higgs pairs on $\mathcal X_T$ over $T$.
\end{defe}

We define
\begin{equation}\label{Aut2025}
Aut(X_r/X_0)=
\left\{
\begin{array}{@{}ll@{}}
\text{automorphisms of}~~ X_r,~~\text{which commute}\\
\text{with the projection morphism to} ~~X_0.
\end{array}\right\}
\end{equation}

Notice that $Aut(X_r/X_0)$ is also the subgroup of $Aut(X_r)$, which consists of all the automorphisms, which are the identity morphism on the sub curve $\tilde{X_0}$.

\begin{defe}\label{GisEq}
\begin{enumerate}
\item Two Gieseker vector bundles $(\mathcal X^{mod}_T, \mathcal E_T)$ and $(\mathcal X'^{mod}_T, \mathcal E'_T)$ are called equivalent if there exists an isomorphism $\sigma_T: \mathcal X^{mod}_T\rightarrow \mathcal X'^{mod}_T$ such that $\sigma_T$ commutes with the projection map $\pi_T$ and $\sigma^*_T\mathcal E'_T$ is isomorphic to $\mathcal E_T$ as vector bundles over $\mathcal X^{mod}_T$. 

\item Two Gieseker-Higgs bundles $(\mathcal X^{mod}_T, \mathcal E_T,\phi_T)$ and $(\mathcal X^{mod}_T, \mathcal E'_T, \phi'_T)$ are called equivalent if there exists an isomorphism $\sigma_T: \mathcal X^{mod}_T\rightarrow \mathcal X'^{mod}_T$ such that $\sigma_T$ commutes with the projection map $\pi_T$ and $(\sigma^*_T\mathcal E'_T, \sigma^*_T\phi'_T)$ is isomorphic to $(\mathcal E_T, \phi_T)$ as Higgs bundles over $\mathcal X^{mod}_T$. 
\end{enumerate}
\end{defe}

\begin{defe}\cite[Definition 3.4, 3.6, 3.8]{3}\hspace{2cm}\label{Definition101}
\begin{enumerate}
\item \textsf{Functor of Gieseker curves $F_{GC,S}$:}
We define the functor of Gieseker curves 
$$
F_{GC,S}: {Sch}/S\rightarrow {Sets} $$ 

\begin{equation}
T
\mapsto \left\{
\begin{array}{@{}ll@{}}
\text{Isomorphism classes of}\\
\text{modifications}~~\mathcal X^{mod}_T\rightarrow \mathcal X_T
\end{array}\right\}
\end{equation}

\item \textsf{Functor of Gieseker vector bundles $F_{GVB,S}$:}
We define the functor of Gieseker vector bundles 

$$F_{GVB,S}: {Sch}/S\rightarrow {Sets}$$  
\begin{equation}
T
\mapsto \left\{
\begin{array}{@{}ll@{}} 
\text{Gieseker-equivalent classes of families of Gieseker}\\
\text{ vector bundles i.e., pairs} ~~(\mathcal X^{mod}_T, \mathcal E_T), \text{where}~~ \mathcal X^{mod}_T\\
 \text{ is a family of Gieseker curves and} \mathcal E_T~~\text{ a family of }\\
\text{ Gieseker vector bundles on}~~ \mathcal X^{mod}_T
\end{array}\right\}
\end{equation}

\item \textsf{Functor of Gieseker-Higgs bundles $F_{GHB,S}$:}
We define the functor of Gieseker-Higgs bundles 

$$F_{GHB,S}: {Sch}/S\rightarrow {Sets}$$ 
\begin{equation}
T
\mapsto \left\{
\begin{array}{@{}ll@{}} 
\text{Gieseker-equivalent classes of families of Gieseker-Higgs bundles i.e., triples}\\
(\mathcal X^{mod}_T, \mathcal E_T, \phi_T: \mathcal E_T\rightarrow \mathcal E_T\otimes \omega_{\mathcal X^{mod}_T/T}), \text{where}~~ \mathcal X^{mod}_T~~ \text{ is a family of Gieseker curves} \\
\text{and}~~\mathcal E_T~~\text{ a family of Gieseker vector bundles on}~~\mathcal X^{mod}_T~~\text{ and }~~\phi_T: \mathcal E_T\rightarrow \mathcal E_T\otimes \omega_{\mathcal X^{mod}_T/T}\\
\text{any}~~ \mathcal O_{\mathcal X^{mod}_T}-\text{module homomorphism}
\end{array}\right\}
\end{equation}

\end{enumerate}
\end{defe}

Let us denote by $F^{st}_{GVB,S}$ and $F^{st}_{GHB,S}$ the open subfunctors of stable Gieseker vector bundles and stable Gieseker-Higgs bundles, respectively. Now we recall few results from \cite{3} and \cite{25}, which are necessary for further discussion.

\begin{enumerate}
\item \cite[Theorem 2]{25} Assume $(n,d)=1$. The functor of stable Gieseker vector bundles $F^{st}_{GVB,S}$ is represented by a scheme $\mathcal M_{GVB,S}$ which is projective and flat over $S$. Let us denote the closed fibre by $\mathcal M_{GVB}$. The variety $\mathcal M_{GVB,S}$ is regular as a scheme over $k$, and the closed fibre $\mathcal M_{GVB}$ is a normal crossing divisor.

\item \cite[Theorem 1.1]{3} Assume $(n,d)=1$. The functor of stable Gieseker-Higgs bundles $F^{st}_{GHB,S}$ is represented by a scheme $\mathcal M_{GHB,S}$ which is quasi-projective and flat over $S$. Let us denote the closed fibre by $\mathcal M_{GHB}$. The variety $\mathcal M_{GHB,S}$ is regular as a scheme over $k$, and the closed fibre $\mathcal M_{GHB}$ is a normal crossing divisor. Moreover, there is a Hitchin map $h_S: \mathcal M_{GHB,S}\rightarrow B_S$ to an affine space over $S$. Moreover, the map $h$ is proper. 
\end{enumerate}

\subsubsection{\textbf{Construction of the moduli Gieseker vector bundles and Gieseker-Higgs bundles}} Let us briefly recall the constructions of moduli of Gieseker vector bundles (Gieseker-Higgs bundles) from \cite[Section 3]{25} (\cite[Section 5.3]{3}).

Let us choose a relatively ample line bundle $\mathcal O_{\mathcal X/S}(1)$ for the family of curves $\mathcal X/S$. The set of all flat families of stable torsion-free sheaves (Higgs pairs) of degree $d$ and rank $n$ over $\mathcal X$ forms a bounded family. Therefore we can choose a large integer $m$ such that given any family of stable torsion-free Higgs pairs $(\mathcal F_S, \phi_S)$, the sheaf $\mathcal F_s\otimes \mathcal O_{\mathcal X_s}(m)$ is generated by global sections and $H^1(\mathcal X_s, \mathcal F_s\otimes \mathcal O_{\mathcal X_s}(m))=0$ for every geometric point $s\in S$. Set $N:=H^0(\mathcal X_s, \mathcal F_s\otimes \mathcal O_{\mathcal X_s}(m))$ for any geometric point $s\in S$. We denote by $Grass(N,n)$ the Grassmannian of $n$ dimensional quotient vector spaces of $\mathbb C^N$.

\begin{defe}
Let $\mathcal G_S:{Sch}/S\rightarrow {Sets}$ be the functor defined as follows:
\begin{equation}
\mathcal G_S(T)=\{(\Delta_T, V_T)\},
\end{equation}
where
\begin{equation}
\Delta_T\subset \mathcal X\times_S T\times {Grass}(N,n)
\end{equation}
is a closed subscheme and $V_T$ is a vector bundle on $\Delta_T$ such that
\begin{enumerate}
\item the projection $j: \Delta_T\rightarrow T\times {Grass}(N,n)$ is a closed immersion,
\item the projection $\Delta_T\rightarrow \mathcal X\times_S T$ is a modification,
\item the projection $p_T:\Delta_T\rightarrow T$ is a flat family of Gieseker curves,
\item Let $\mathcal V$ be the tautological quotient bundle of rank $n$ on ${Grass}(N,n)$ and $\mathcal V_T$
its pullback to $T\times {Grass}(N,n)$. Then
\begin{equation}
V_T:=j^*(\mathcal V_T)
\end{equation}
be such that $V_T$ is a Gieseker vector bundle on the modification $\Delta_T$ of rank $n$ and degree
$d':=N+n(g-1)$.
\item for eact $t\in T$, the quotient $\mathcal O_{\Delta_t}^N\rightarrow V_t$ induces an isomorphism
\begin{equation}
H^0(\Delta_t, \mathcal O_{\Delta_t}^N)\cong H^0(\Delta_t, V_t)
\end{equation}
and $H^1(\Delta_t, V_t)=0$.
\end{enumerate}
\end{defe}

We denote by $P$ the Hilbert polynomial of the closed subscheme $\Delta_s$ of $\mathcal X_s\times Grass(N,n)$ for any geometric point $s\in S$ with respect to the polarisation $\mathcal O_{\mathcal X_s}(1)\boxtimes \mathcal O_{Grass(N,n)}(1)$, where $\mathcal O_{Grass(N,n)}(1)$ is the line bundle $det~~\mathcal V$.

\begin{rema}\label{GIT420}
It is shown in \cite[Proposition 8]{25} that the functor $\mathcal G_S$ is represented by a $PGL(N)$-invariant open subscheme
$\mathcal Y_S$ of the Hilbert scheme $\mathcal H_S:={Hilb}^P(\mathcal X\times {Grass}(N,n))$. Moreover, the subfunctor $\mathcal G^{st}_S$
of stable Gieseker vector bundles is represented by an open subscheme $\mathcal Y^{st}_S$ of $\mathcal Y_S$. The moduli of Gieseker vector bundles 
\begin{equation}
\mathcal M_{GVB,S}:=\mathcal Y^{st}_S\parallelslant PGL(N)
\end{equation}

is the GIT quotient. Moreover, the
action of $PGL(N)$ is free; therefore $\mathcal Y^{st}_S\rightarrow \mathcal M_{GVB,S}$ is a principal $PGL(N)$-bundle.
\end{rema}

\begin{rema}\label{Cur300}
Let $\Delta_{\mathcal Y_S}$ be the universal object defining the functor $\mathcal G^{st}_S$. By definition, we have the following closed immersion
\begin{equation}
\begin{tikzcd}
\Delta_{\mathcal Y^{st}_S}\arrow[hook]{r}\arrow{dr} & \mathcal Y^{st}_S\times {Grass}(N,n)\arrow{d}\\
& \mathcal Y^{st}_S
\end{tikzcd}
\end{equation}
More precisely, $\Delta_{\mathcal Y^{st}_S}=\{(y, x)\in \mathcal Y^{st}_S\times Grass(N,n)| ~~y\in \mathcal Y^{st}_S ~~\&~~x\in \Delta_{y}\}$. Here $\Delta_y$
denotes the fiber of the morphism $\Delta_{\mathcal Y^{st}_S}\rightarrow \mathcal Y^{st}_S$ over the point $y\in \mathcal Y^{st}_S$. Using this description, it is clear that the action of $PGL(N)$ on $\mathcal Y^{st}_S\times Grass(N,n)$ restricts to an action on the subscheme $\Delta_{\mathcal Y^{st}_S}$ such that the morphism is equivariant under the action of $PGL(N)$. Since the action of $PGL(N)$ is free on $\mathcal Y^{st}_S$ the action is also free on $\Delta_{\mathcal Y^{st}_S}$.
\end{rema}

\begin{defe}
We define a functor
\begin{equation}
\mathcal G^{H}_S: {Sch}/\mathcal Y_S\rightarrow {Groups}
\end{equation}
which maps $$T\rightarrow H^0(T, (p_T)_*(\mathcal End~~\mathcal V_T\otimes \omega_{\Delta_T/T})),$$

where $p_T: \Delta_T:=\Delta_{_{\mathcal Y_S}}\times_{_{\mathcal Y_S}}
T\rightarrow T$ is the projection, and $\omega_{\Delta_T/T}$ denotes the relative dualising sheaf of the family of curves $p_T$.
\end{defe}

\begin{rema}\label{GIT421}
Since $\mathcal Y_S$ is a reduced scheme the functor $\mathcal G^{H}_S$ is representable i.e., there exists a linear $\mathcal Y_S$-
scheme $\mathcal Y^{H}_S$ which represents it. For a $S$-scheme $T$, a point in $\mathcal G^{H}_S(T)$ is given by
$(V_T , \phi_T)$, where
\begin{enumerate}
\item $V_T \in \mathcal G_S(T)$, and
\item $(V_T, \phi_T)$ is a Gieseker–Higgs bundle.
\end{enumerate}
The subfunctor $\mathcal G^{H,st}_S$ of stable Gieseker-Higgs bundles is represented by an open subscheme $\mathcal Y^{H,st}_S$ of
$\mathcal Y^{H}_S$. The moduli of Gieseker-Higgs bundles 
\begin{equation}
\mathcal M_{GHB,S}:=\mathcal Y^{H,st}_S\parallelslant PGL(N)
\end{equation}
is the GIT quotient. As before, the action of $PGL(N)$ is free and therefore $\mathcal Y^{H,st}_S\rightarrow \mathcal M_{GHB,S}$ is a principal $PGL(N)$-bundle. Let us pullback the universal curve $\Delta_{\mathcal Y_S}$ via the morphism $\mathcal Y^{H,st}_S \rightarrow \mathcal Y^{st}_S$ and denote it by $\Delta_{\mathcal Y^{H}_S}$. The action of $PGL(N)$ lifts to an action on $\Delta_{\mathcal Y^H_S}$. The morphism $\Delta_{\mathcal Y^{H}_S}\rightarrow \mathcal Y^{H,st}$ is equivariant under the action of $PGL(N)$.
\end{rema}

\subsection{On Poisson structures on schemes}
Let $X$ be a scheme over $\mathbb C$. For any positive integer $k$, we write $\mathcal X^k_{X}:=(\Omega^k_X)^{\vee}$, the dual of the $\mathcal O_X$-module $\Omega^k_X$. This is the $\mathcal O_X$-module of alternating $k$-multiliear forms on $\Omega^1_X$. The natural map $\wedge^k T_X\rightarrow \wedge^k \mathcal X^k_X$ is an isomorphism for $k=1$ but need not be isomorphism in the higher degrees. We refer to the sections of $\mathcal X^k_X$ as $k$-derivations.

\begin{defe}\cite[Definition 1]{13}
A Poisson scheme is a pair $(X,\sigma)$, where $X$ is a scheme and $\sigma\in H^0(X, \mathcal X^2_X)$ is a $2$-derivation such that the $\mathbb C$-bilinear morphism 
\begin{equation}
\{\cdot, \cdot\}: \mathcal O_X\times \mathcal O_X\rightarrow \mathcal O_X,
\end{equation}
$$(g,h)\mapsto \sigma(dg\wedge dh)$$
 defines a Lie algebra structure on $\mathcal O_X$. This Lie bracket is the Poisson bracket. 
 
Using the Hom-Tensor duality we get, $Hom(\Omega_X, T_X)\cong H^0(X, \mathcal X^2_X)$. Therefore, the $2$-derivation $\sigma$ induces an $\mathcal O_X$-linear map (the anchor map) $\sigma^{\flat}: \Omega^1_X\rightarrow T_X$ defined by 
\begin{equation}
\sigma^{\flat}(\alpha)(\beta)=\sigma(\alpha\wedge \beta)
\end{equation}
for all $\alpha, \beta\in \Omega^1_X$. We say that $(X,\sigma)$ is a smooth Poisson scheme if the underlying scheme $X$ is smooth.
\end{defe}

\begin{defe}\cite[Definition 2]{13}\label{PoissonMorphism}
Let $(X,\sigma)$ and $(Y, \eta)$ be Poisson schemes with corresponding brackets $\{\cdot, \cdot\}_X$ and $\{\cdot, \cdot\}_Y$. A morphism $f: X\rightarrow Y$ is a Poisson morphism if it preserves the Poisson brackets, i.e., the pull-back morphism $f^*: O_Y\rightarrow f_*\mathcal O_X$ satisfies 
\begin{equation}
f^*\{g,h\}_Y=\{f^*g, f^*h\}_X
\end{equation}
for all $g,h\in \mathcal O_Y$. Equivalently, $f$ is a Poisson morphism if the following diagram is commutative.
\begin{equation}
\begin{tikzcd}
\Omega_X \arrow{d}{\sigma^{\flat}_X} & f^*\Omega_Y\arrow{d}{f^*\sigma^{\flat}_Y}\arrow{l}{(df)^*}\\
T_X\arrow{r}{df}& f^*T_Y
\end{tikzcd}
\end{equation}
\end{defe}

\begin{defe}
Let $(X,\sigma_X)$ be a Poisson scheme. We say the Poisson scheme $(Y, \sigma_Y)$ is a Poisson subscheme of $(X,\sigma_X)$ if $Y$ is a subscheme of $X$ and the embedding $i: Y\rightarrow X$ is a Poisson morphism.  
\end{defe}

Here we recall, from \cite[section 3]{13}, few examples of natural Poisson subschemes of a Poisson scheme $(X, \sigma)$.

\begin{exam} An open embedding is a Poisson subscheme in a unique way. A closed subscheme $Y$ of $X$ admits the structure of a Poisson subscheme if and only if $\{I_Y, \mathcal O_X\}\subset I_Y$. Note that the condition is necessary and sufficient for $\{\cdot, \cdot\}$ to descent to a Poisson bracket on $\mathcal O_Y=\mathcal O_X/I_Y$. In this case, the induced Poisson structure on $Y$ is unique. We denote it by $\sigma|_Y$ \cite[Proposition 2]{13}.
\end{exam}

\begin{exam}\label{Ex2}
The irreducible components of $X$ are Poisson subvarieties. Similarly, the singular locus of $X$ is a Poisson subscheme \cite[Lemma 3]{13}.
\end{exam}

\begin{defe}\cite[Definition 5]{13} Let $(X, \sigma)$ be a Poisson scheme. The degeneracy loci $D_{2k}(\sigma)$ of $\sigma$ is the locus where the morphism $\sigma^{\flat}: \Omega^1_X\rightarrow T_X$ has rank at most $2k$. It is the closed subscheme whose ideal sheaf is the image of the morphism 
\begin{equation}
\Omega^{2k+1}_X\xrightarrow{\sigma^{k+1}} \mathcal O_X
\end{equation}
where 
\begin{equation}
\sigma^{k+1}:=\underbrace{\sigma\wedge\cdots\wedge \sigma}_{k+1~~\text{times}}\in H^0(X, \mathcal X^{2k+2}_X).
\end{equation}
\end{defe}
\begin{exam}\label{exam12}
From \cite[Proposition 6]{13}, it follows that for $0\leq 2k\leq dim~~ X$, the degeneracy loci $D_{2k}(\sigma)$ are Poisson subschemes of $X$. Notice that $D_{2k}(\sigma)\setminus D_{2k-2}(\sigma)$ is a subscheme of $X$ consisting of points where the rank of the morphism $\sigma^{\flat}$ is exactly equal to $2k$. From \cite[Lemma 5]{13}, it follows that, If $(X,\sigma)$ is a Poisson scheme, and $Y$ is a Poisson subscheme, then $D_{2k}(\sigma)\cap Y=D_{2k}(\sigma|_Y)$. In particular, it implies that if $Y$ is a Poisson subscheme of $X$ and $y$ is any point of $Y$, then the Poisson rank of $\sigma$ at $y$ is the same as the Poisson rank of $\sigma|_Y$ at $y$. We get a natural stratification of $X$ by closed Poisson subschemes
\begin{equation}\label{Pstrat}
D_{dim ~~X}(\sigma):=X\supseteq D_{dim ~~X-2}(\sigma)\supseteq \cdots\supseteq D_{2k}(\sigma)\supseteq D_{2k-2}(\sigma)\supseteq \cdots \supseteq D_{0}(\sigma).
\end{equation}

We refer to it as the stratification by Poisson ranks.
\end{exam}

\subsection{On log-symplectic and relative log-symplectic structure}

\begin{defe} Let $S$ be a discrete valuation ring and $f: Y_S\rightarrow S$ be a scheme over $S$. Let us denote the closed fibre by $Y$. We call $Y_S$ a flat degeneration over $S$ if it satisfies the following conditions
\begin{enumerate}
\item $Y_S$ is regular as a scheme over $k$,
\item the generic fibre of $f: Y_S\rightarrow S$ is smooth, and
\item $Y$ is a normal crossing divisor in $Y_S$.
\end{enumerate}  
\end{defe}

Let $t$ be a uniformising parameter of $S$. Then the divisor $Y$ is the vanishing locus of the function $t\circ f$ on $Y_S$. 

\begin{defe}\cite[Definition 1.1, 1.2]{30.5}
The sheaf of differentials on $Y_S$ with logarithmic poles along $Y$ is defined by
\begin{equation}
\Omega_{Y_S}(log~~Y):=\{\text{meromorphic forms}~~ \omega~~\text{on}~~Y_S~~|~~t\cdot \omega ~~\text{and}~~t\cdot d\omega~~\text{are both regular differential forms} \}
\end{equation}
\end{defe}

Since any two uniformising parameter of $S$ differs by an unit, the definition does not depend on the choice of the uniformising parameter. By a local calculation \cite[Properties 2.2, (c)]{10} it follows that in our case $\Omega_{Y_S}(log~~Y)$ is a locally free sheaf. We call it the log-cotangent bundle. We call the dual of this vector bundle the log-tangent bundle and denote it by $T_{Y_S}(-log~~Y)$. 

A similar local calculation also shows that $Coker(f^*\Omega_S(\eta_0)\rightarrow \Omega_{Y_S}(log~~Y))$ is a vector bundle on $Y_S$. We call it the relative log-cotangent bundle and denote it by $\Omega_{Y_S/S}(log~~Y)$. We call the dual vector bundle the relative log-tangent bundle and denote it by $T_{Y_S/S}(-log~~Y)$. We call the restriction of the vector bundle $\Omega_{Y_S/S}(log~~Y)$ to $Y$ the log-cotangent bundle of $Y$ and denote it by $\Omega_{Y}(log~~\partial Y)$ and we call its dual the log tanegnt bundle of $Y$ and denote it by $T_{Y}(-log~~\partial Y)$. Here we denote by $\partial Y$ the singular locus of $Y$.

Let $q: X\rightarrow Y$ be the normalization. We denote by $\partial X$ the preimage $q^{-1}(\partial Y)$. It follows that $(X, \partial X)$ is a normal crossing divisor. 

\begin{lema}\label{LogExt}
$q^* \Omega_{Y}(log~~\partial Y)\cong \Omega_{X}(log~~\partial X)$.
\end{lema}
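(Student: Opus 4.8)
The statement is local on $Y$, so the plan is to reduce to an explicit local model and then build a canonical comparison map. Since $Y_S$ is regular of dimension $N$ and $Y=(t\circ f)^{-1}(0)$ is a normal crossing divisor, around any point of $Y$ I can choose regular coordinates $z_1,\dots,z_N$ on $Y_S$ in which the uniformiser becomes $t=z_1\cdots z_k$, so that $Y=\{z_1\cdots z_k=0\}$ with local branches the smooth divisors $X_i=\{z_i=0\}$, $i=1,\dots,k$. In these coordinates $\Omega_{Y_S}(\log Y)$ is free on $\tfrac{dz_1}{z_1},\dots,\tfrac{dz_k}{z_k},dz_{k+1},\dots,dz_N$, the image of $f^*\Omega_S(\eta_0)$ is the line spanned by $\tfrac{dt}{t}=\sum_{j=1}^{k}\tfrac{dz_j}{z_j}$, and hence $\Omega_{Y_S/S}(\log Y)$ is the rank $N-1$ quotient by this single relation; restricting to $Y$ gives the local model of $\Omega_Y(\log \partial Y)$.

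First I would pin down the normalisation locally: $q^{-1}$ of this chart is $\bigsqcup_{i=1}^{k}X_i$, with $q|_{X_i}$ the closed immersion of the $i$-th branch and $\partial X_i=\{z_i=0\}\cap\bigcup_{j\ne i}\{z_j=0\}$ a normal crossing divisor in $X_i$ (coordinates $z_1,\dots,z_N$ with $z_i$ omitted). Because $\Omega_{Y_S/S}(\log Y)$ is locally free, tensoring the defining short exact sequence by $\mathcal O_{X_i}$ stays exact, so $q^*\Omega_Y(\log\partial Y)|_{X_i}$ is canonically $\Omega_{Y_S}(\log Y)|_{X_i}$ modulo the line spanned by $\tfrac{dt}{t}|_{X_i}=\sum_{j=1}^{k}\tfrac{dz_j}{z_j}|_{X_i}$. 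On the other side $\Omega_{X_i}(\log\partial X_i)$ is free on $\{\tfrac{dz_j}{z_j}|_{X_i}\}_{j\le k,\,j\ne i}\cup\{dz_j|_{X_i}\}_{j>k}$, also of rank $N-1$, so rank comparison will suffice once a natural map is exhibited.

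The comparison map I would use comes from the Poincar\'e residue/adjunction for the smooth divisor $X_i\subset Y_S$. Restriction of log forms gives the residue sequence $0\to \mathcal O_{X_i}\cdot\tfrac{dz_i}{z_i}\to \Omega_{Y_S}(\log Y)|_{X_i}\xrightarrow{\rho}\Omega_{X_i}(\log\partial X_i)\to 0$. The subtlety is that $\rho$ does \emph{not} kill $\tfrac{dt}{t}|_{X_i}$: its image is $\sum_{j\ne i,\,j\le k}\tfrac{dz_j}{z_j}$, so $\rho$ by itself does not descend to the relative quotient. I would correct $\rho$ using the canonical residue $\mathrm{res}_{X_i}$ along $X_i$ (extraction of the $\tfrac{dz_i}{z_i}$-coefficient, which is coordinate-independent since $z_i$ is a local equation of $X_i$), setting $\psi:=\rho-\rho(\tfrac{dt}{t})\cdot \mathrm{res}_{X_i}(-)$. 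A direct check on the basis shows $\psi$ is surjective with kernel exactly the line spanned by $\tfrac{dt}{t}|_{X_i}$, hence it descends to an isomorphism $q^*\Omega_Y(\log\partial Y)|_{X_i}\xrightarrow{\ \sim\ }\Omega_{X_i}(\log\partial X_i)$. Since $\psi$ is assembled only from the canonical datum $\tfrac{dt}{t}$ and the Poincar\'e residues along the branches, it is independent of the coordinate choice, so it glues across charts and across the branches of $q$ to the desired global isomorphism.

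The main obstacle is precisely this descent issue: unlike for the absolute (tautological) log-cotangent bundle, the naive restriction-of-forms map does not respect the relative quotient by $\tfrac{dt}{t}$, and one must verify that the residue-corrected map $\psi$ is simultaneously canonical (so that it glues) and a fibrewise isomorphism. Everything else is routine bookkeeping with the explicit local bases, using that all sheaves involved are locally free of the predicted rank, so that pullback is exact and a rank count closes the argument.
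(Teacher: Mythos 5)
Your proof is correct in substance but follows a genuinely different route from the paper's. The paper does not construct the isomorphism by hand: it invokes Friedman's description of the log-cotangent sheaf of a normal crossing variety (\cite[Theorem 3.2]{11}) to obtain a natural inclusion $\Omega_{Y}(log~~\partial Y)\hookrightarrow q_*\Omega_{X}(log~~\partial X)$, hence a morphism $q^*\Omega_{Y}(log~~\partial Y)\to \Omega_{X}(log~~\partial X)$ between vector bundles of the same rank; it then checks from Friedman's local formulae that this is an isomorphism off $\partial^2 X$ and concludes by the standard determinant/codimension-two argument. Your approach instead builds the isomorphism explicitly branch by branch in adapted coordinates and glues; this is more self-contained and makes the map completely explicit, at the price of having to verify coordinate-independence.

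That verification is the one place where your write-up is too quick. The map $\rho$ in your ``residue sequence'' is \emph{not} canonical: the subsheaf $\mathcal O_{X_i}\cdot \tfrac{dz_i}{z_i}\subset \Omega_{Y_S}(log~~Y)|_{X_i}$ depends on the choice of $z_i$, since $\tfrac{d(uz_i)}{uz_i}=\tfrac{dz_i}{z_i}+\tfrac{du}{u}$ and $\tfrac{du}{u}\big|_{X_i}$ need not vanish; so the assertion that $\psi$ is ``assembled only from canonical data'' is not literally true. The corrected map is nevertheless canonical, for the following reason which you should supply: one has $\psi(\alpha)=\rho\bigl(\alpha-\mathrm{res}_{X_i}(\alpha)\cdot \tfrac{dt}{t}\bigr)$, and $\alpha-\mathrm{res}_{X_i}(\alpha)\tfrac{dt}{t}$ has vanishing residue along $X_i$, i.e.\ lies in the image of $\Omega_{Y_S}\bigl(log~~(Y\setminus X_i)\bigr)\big|_{X_i}$, on which $\rho$ coincides with the canonical functorial restriction of logarithmic forms to $X_i$ (which maps $\partial X_i$ into the union of the remaining branches). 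With that observation in place your argument closes, and the resulting isomorphism agrees with the one the paper obtains from Friedman's inclusion.
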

\begin{proof}
From \cite[Theorem 3.2]{11}, we have the following inclusion of sheaves
\begin{equation}
\Omega_{Y}(log~~\partial Y)\hookrightarrow q_*\Omega_{X}(log~~\partial X)
\end{equation}

It induces the following inclusion.

\begin{equation}
q^* \Omega_{Y}(log~~\partial Y)\hookrightarrow \Omega_{X}(log~~\partial X)
\end{equation}

The support of the cokernel is $\partial X$. Let $\partial^2 X:=\text{singular locus of}~~\partial X$. We claim that the morphism is an isomorphism over $X\setminus \partial^2 X$.

Assuming the claim, we see that the morphism of two vector bundles is isomorphic outside co-dimension $2$. Therefore the map must be an isomorphism.

The proof of the claim follows from the description \cite[Equation 3.1.1, (3.1.2)']{11}.

\end{proof}

\begin{defe}\label{RelLogSym}
A relative log-symplectic form on $Y_S$ is a relative non-degenerate $2$-form $\omega_S\in H^0(Y_S, \Omega^2_{Y_S/S}(log~~Y))$ such that $\omega$ is non-degenerate over $T_{Y_S/S}(-log~~Y)$ and $d\omega_S=0$, where $d$ is the relative exterior derivative. 
\end{defe}

\begin{thm}\label{log-sym1}
There is a natural relative log-symplectic structure on the relative log-cotangent bundle $\Omega_{_{Y_S/S}} ({log}~~Y)$.
\end{thm}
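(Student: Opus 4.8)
The plan is to construct $\omega_S$ as the relative logarithmic analogue of the canonical (Liouville) symplectic form on a cotangent bundle. Write $W := \Omega_{Y_S/S}(\log Y)$ and let $\pi \colon \mathbb{E} \to Y_S$ denote the total space of the vector bundle $W$, regarded as a scheme over $S$ via $f \circ \pi$. First I would check that $\mathbb{E}$ is again a flat degeneration over $S$ in the sense of the definition preceding Definition \ref{RelLogSym}: since $\pi$ is a vector bundle, hence smooth, the scheme $\mathbb{E}$ is regular over $k$; its generic fibre is the total space of the cotangent bundle of the smooth fibre $Y_\eta$ and is therefore smooth; and the preimage $\pi^{*}Y := \pi^{-1}(Y)$ is a normal crossing divisor because $Y$ is one in $Y_S$ and $\pi$ is smooth. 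Consequently the relative log-cotangent and log-tangent bundles $\Omega_{\mathbb{E}/S}(\log \pi^{*}Y)$ and $T_{\mathbb{E}/S}(-\log \pi^{*}Y)$ are defined, and Definition \ref{RelLogSym} applies on $\mathbb{E}$.

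Next I would build the tautological relative log $1$-form. The total space $\mathbb{E}$ carries a tautological section $\tau \in H^0(\mathbb{E}, \pi^{*}W)$, whose value at a point $(y,\xi)$ with $y \in Y_S$ and $\xi \in W_y$ is $\xi$ itself. Because $\pi$ is log-smooth and $\pi^{*}Y$ is the reduced preimage of $Y$, the relative log-cotangent sequence provides a canonical inclusion $\pi^{*}W \hookrightarrow \Omega_{\mathbb{E}/S}(\log \pi^{*}Y)$; composing with $\tau$ yields $\lambda \in H^0(\mathbb{E}, \Omega_{\mathbb{E}/S}(\log \pi^{*}Y))$. I then set $\omega_S := d\lambda$, the relative logarithmic exterior derivative, so that $\omega_S \in H^0(\mathbb{E}, \Omega^2_{\mathbb{E}/S}(\log \pi^{*}Y))$ and $d\omega_S = d^2\lambda = 0$ automatically. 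Thus closedness is free, and the entire content of the theorem is the non-degeneracy of $\omega_S$ on $T_{\mathbb{E}/S}(-\log \pi^{*}Y)$.

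For non-degeneracy I would argue in a local frame. Fix a local frame $e_1, \dots, e_n$ of $W$ (étale-locally on $Y_S$ these are of the form $dz_i/z_i$ and $dw_j$, subject to the single relative relation coming from quotienting by $f^{*}\Omega_S(\eta_0)$), let $\xi_1, \dots, \xi_n$ be the corresponding fibre coordinates on $\mathbb{E}$, and let $\theta_1, \dots, \theta_n$ be the dual frame of $T_{Y_S/S}(-\log Y)$. Then $\lambda = \sum_a \xi_a\, \pi^{*}e_a$ and
\begin{equation}
\omega_S = \sum_{a} d\xi_a \wedge \pi^{*}e_a + \sum_{a} \xi_a\, \pi^{*}(de_a).
\end{equation}
In the frame of $T_{\mathbb{E}/S}(-\log \pi^{*}Y)$ consisting of the vertical fields $\partial/\partial \xi_a$ together with horizontal log-lifts $\widetilde{\theta}_a$ of $\theta_a$, the matrix of $\omega_S$ has the block form $\left(\begin{smallmatrix} 0 & I \\ -I & C \end{smallmatrix}\right)$, where the antisymmetric block $C_{ab} = \sum_c \xi_c\, (de_c)(\theta_a,\theta_b)$ records the correction term. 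Such a matrix has determinant $\pm 1$, hence is invertible at every point of $\mathbb{E}$ — crucially including the points lying over the normal crossing divisor $\pi^{*}Y$, since the log-frames remain regular there. This establishes non-degeneracy. Restricting to the generic fibre, where the log structure is trivial, recovers the standard symplectic form on the cotangent bundle of the smooth fibre $Y_\eta$, as expected.

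The step I expect to require the most care is the compatibility of the construction with the logarithmic structure: namely, confirming that $\pi^{*}e_a$ is genuinely a section of $\Omega_{\mathbb{E}/S}(\log \pi^{*}Y)$ and that $\pi^{*}(de_a)$ is a genuine relative log $2$-form, so that $\lambda$ and $\omega_S$ have at worst logarithmic poles along $\pi^{*}Y$ and no worse. This rests on the fact that pullback along the log-smooth projection $\pi$ preserves the relative logarithmic de Rham complex, which I would verify through the explicit local description of the relative log-cotangent sequence rather than abstractly; the non-degeneracy computation is then uniform across the closed fibre precisely because these log-frames stay regular and the block matrix above remains invertible independently of $C$.
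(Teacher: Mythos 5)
Your proposal is correct and follows essentially the same route as the paper: both construct the tautological (Liouville) relative logarithmic $1$-form via the canonical inclusion of the pulled-back bundle into the log-cotangent bundle of the total space, take its exterior derivative, and verify non-degeneracy by a local frame computation (the paper merely asserts this last step, which you carry out explicitly with the block matrix $\bigl(\begin{smallmatrix} 0 & I \\ -I & C \end{smallmatrix}\bigr)$). The only cosmetic difference is the sign convention $\omega = d\lambda$ versus the paper's $\omega = -d\lambda$, which does not affect the statement.
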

\begin{proof}
Let $\tilde f:\Omega_{Y_S/S} ({log}~~Y)\rightarrow Y_S$ denote the projection map. The vector bundle $\tilde f^*\Omega_{Y_S/S} ({log}~~Y)\cong \Omega_{Y_S/S} ({log}~~Y)\times_{Y_S} \Omega_{Y_S/S} ({log}~~Y)$ has a diagonal section $\lambda:\Omega_{Y_S/S} ({log}~~Y)\rightarrow \Omega_{Y_S/S} ({log}~~Y)\times_{Y_S} \Omega_{Y_S/S} ({log}~~Y)$. But $\tilde f^*\Omega_{Y_S/S} ({log}~~Y)$ is a sub-bundle of the log cotangent bundle of $\Omega_{Y_S/S} ({log}~~Y)$, where the polar divisor of $\Omega_{Y_S/S} ({log}~~Y)$ is the inverse image $\tilde f^{-1}(Y)$. Therefore we have a logarithmic $1$-form $\lambda$ over $\Omega_{Y_S/S} ({log}~~Y)$. We now define a two form $\omega:=-d\lambda$ by taking the exterior derivative. It is clearly a closed two form. By a local calculation, it follows that $\omega$ is non-degenerate on $T_{Y_S/S}(-log~~Y)$.
\end{proof}

\begin{defe}
 A log-symplectic form on $Y$ is a $2$-form $\omega\in H^0(Y, \Omega^2_{Y}(log~~\partial Y))$ such that $\omega$ is non-degenerate over $T_Y(-log~~\partial Y)$ and $d\omega=0$.
\end{defe}

Inverting $\omega$, we obtain a Poisson bivector 
\begin{equation}
\sigma\in H^0(Y, \mathcal X^2_Y(-log ~~\partial Y)).
\end{equation}

\begin{rema}Given a log symplectic form $\omega$ on $Y$ the pullback $\tilde \omega:=q^*\omega$ is a log symplectic form on the normal crossing divisor $(X, \partial X)$. 
\end{rema}

Let us now discuss a prototype example of a variety with a log-symplectic form.

\begin{exam}\label{cotex}
Consider the smooth variety $\mathbb C^k$ with coordinates $y_1,\dots, y_k$. Consider the normal crossing divisor given by the equation $y_1\cdots y_k=0$. Then the coordinates of the log-cotangent bundle are $\{y_1,\dots, y_k, p_1, \dots, p_k\}$, where $p_j:=y_j\partial_{y_j}$, for every $j=1,\dots, k$. Notice that the log cotangent bundle is a smooth variety isomorphic to $\mathbb C^{2k}$ which has a natural normal crossing divisor given by the equation $y_1\cdots y_k=0$. There is a tautological logarithmic one form (Liouville $1$-form) on the log-cotangent bundle which is given by 
\begin{equation}
\lambda:=\sum^k_{j=1} p_j\cdot \frac{dy_j}{y_j}
\end{equation}
The exterior derivative 
\begin{equation}
\omega:=-d\lambda=\sum^k_{j=1} dp_j\wedge \frac{dy_j}{y_j}
\end{equation}
is a logarithmic symplectic form. The corresponding Poisson bivector is 
\begin{equation}
\sigma:=\sum^k_{j=1} y_j\partial_{y_j}\wedge \partial_{p_j}
\end{equation} 
Now, for any skew-symmetric matrix $(B_{ij})\in \mathbb C^{k\times k}$, consider the $2$-form 
\begin{equation}
B=\sum_{1\leq i<j\leq k} B_{ij} \frac{dy_i}{y_i}\wedge\frac{dy_j}{y_j}
\end{equation}

We can define a new $2$-form 
\begin{equation}\label{Magnetic1}
\omega':=\omega+B
\end{equation}

The form $\omega'$ is again a log-symplectic form with the same polar divisor as $\omega$. The corresponding Poisson bivector
\begin{equation}\label{Magnetic2}
\sigma':=\sigma+\sum_{1\leq i<j\leq k} B_{ij} \partial_{p_i}\wedge \partial_{p_j}
\end{equation}

\end{exam}

\subsection{On the symplectic structure on the moduli of Higgs bundles on a curve}\label{SymCur} Let $X$ be a smooth projective curve. Let $\mathcal M_{VB} (\mathcal M_{HB})$ denote the moduli space of stable vector bundles (Higgs bundles) of rank $n$ and degree $d$, $(n,d)=1$. In this subsection, we recall few results from \cite{7} and \cite{8} about the symplectic structure on $\mathcal M_{HB}$. The following results will be used in the subsequent sections of this paper.

\subsubsection{\textbf{Symplectic form on a cotangent bundle}} Let $Z$ be a smooth variety. Let us denote by $f$ the projection map $\Omega_Z\rightarrow Z$. We have a natural morphism 
\begin{equation}\label{Lou}
\Omega_Z\xrightarrow{\Delta} \Omega_Z\times_Z \Omega_Z\cong f^*\Omega_Z\rightarrow \Omega_{\Omega_Z},
\end{equation}

where $\Delta$ denotes the diagonal map. The above section induces a $1-$form on the cotangent bundle $\Omega_Z$, known as the tautological $1$-form or the Liouville $1$-form. We denote it by $\lambda$. 

The negative of the exterior derivative i.e., $-d\lambda$ is a symplectic form on $\Omega_Z$. We denote it by $\omega$. Let $(z, w)$ be an element of $\Omega_Z$ over a point $z$. Let $v\in T_{\Omega_Z,(z,w)}$. Then by definition \eqref{Lou} we have 
\begin{equation}\label{lamb}
\lambda(v)=w(df(v)).
\end{equation} 
and for two elements $v_1,v_2\in T_{\Omega_Z,(z,w)}$, 
\begin{equation}
\omega(v_1,v_2)=-d\lambda(v_1, v_2).
\end{equation}

\subsubsection{\textbf{The tangent space and cotangent space of $\mathcal M_{VB}$ and $\mathcal M_{HB}$}} The tangent space of $\mathcal M_{VB}$ at a point $\mathcal E$ is naturally isomorphic to the space of first-order infinitesimal deformations of the vector bundle $\mathcal E$. It is well-known that the latter space is isomorphic to $H^1(X, \mathcal End \mathcal E)$. The cotangent space of $\mathcal M_{VB}$ at a point $\mathcal E$ is isomorphic to $H^1(X, \mathcal End \mathcal E)^{\vee}\cong Hom (\mathcal E, \mathcal E\otimes \Omega_X)$. It follows that $\Omega_{\mathcal M_{VB}}$ is an open subset of $\mathcal M_{HB}$ whose complement has codimension $2$. Therefore the natural Liouville form $\lambda$ and the symplectic form 
$\omega$ on $\Omega_{\mathcal M_{VB}}$ extends over $\mathcal M_{HB}$. We will now describe the forms $\lambda$ and $\omega$ on the tangent space of $\mathcal M_{HB}$.

Given a Higgs bundle $(\mathcal E, \phi)$, we denote by $\mathcal C_{\bullet}$ the following complex.

\begin{equation}
0\rightarrow \mathcal End \mathcal E\xrightarrow{[\bullet, \phi]} \mathcal End \mathcal E\otimes \Omega_X\rightarrow 0,
\end{equation}

where $[\bullet, \phi]$ is the morphism of $\mathcal O_X$-modules which maps $s\mapsto (s\otimes \mathbb 1)\circ \phi-\phi\circ s$.

We denote by $\mathcal C^{\vee}_{\bullet}$ the dual complex.

\begin{equation}
0\rightarrow \mathcal End \mathcal E\xrightarrow{[\phi, \bullet]} \mathcal End \mathcal E\otimes \Omega_X\rightarrow 0,
\end{equation}

where $[\phi, \bullet ]:=-[\bullet, \phi]$. 

Let $X=\cup_{i\in \Lambda} U_i$ be an open cover such that $\mathcal E$ and $\Omega_X$ are trivial over $U_i$ for each $i\in \Lambda$. For any $i,j,k\in \Lambda$ (all distinct), we set $U_{ij}:=U_i\cap U_j$ and $U_{ijk}:=U_i\cap U_j\cap U_k$. Let $\{A_{ij}\}_{i,j\in \Lambda}$ denote the transition functions of $\mathcal E$ with respect to the open cover $\{U_i\}_{i\in \Lambda}$. Then we have $A_{ij}A_{jk}=A_{ik}$. Let $\phi_i\in \Gamma(U_i, \mathcal End \mathcal E\otimes \Omega_X)$ denote the Higgs fields on each open sets in the open cover which glue to give the Higgs field $\phi$. In other words, $A_{ij}\circ \phi_j\circ A_{ij}^{-1}=\phi_i$ for all $i,j\in \Lambda$.

\begin{prop}\label{prop6565}
\begin{enumerate}
\item The tangent space of $\mathcal M_{HB}$ at $(\mathcal E, \phi)$ is isomorphic to $\mathbb H^1(X, \mathcal C_{\bullet})$,

\item The cotangent space is isomorphic to $\mathbb H^1(X, \mathcal C^{\vee}_{\bullet})$. 
\end{enumerate}
\end{prop}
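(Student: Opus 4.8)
The plan is to handle the two parts by first realizing the tangent space as a space of first-order infinitesimal deformations of the pair $(\mathcal E,\phi)$, and then obtaining the cotangent space from it by Serre duality. Since we assume $(n,d)=1$, every stable Higgs bundle is simple, so $\mathcal M_{HB}$ is a fine moduli space and the tangent space at $(\mathcal E,\phi)$ is canonically the set of isomorphism classes of deformations of $(\mathcal E,\phi)$ over $\spec k[\epsilon]$ restricting to $(\mathcal E,\phi)$ over the closed point. I would compute this space directly via the Čech double complex associated with $\mathcal C_{\bullet}$ and the cover $\{U_i\}_{i\in\Lambda}$, reusing the transition functions $A_{ij}$ and the local Higgs fields $\phi_i$ already fixed in the excerpt.

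Concretely, a first-order deformation is recorded by deformed transition data $\tilde A_{ij}=A_{ij}(I+\epsilon a_{ij})$ with $a_{ij}\in\Gamma(U_{ij},\mathcal End\,\mathcal E)$, together with deformed local Higgs fields $\tilde\phi_i=\phi_i+\epsilon\psi_i$ where $\psi_i\in\Gamma(U_i,\mathcal End\,\mathcal E\otimes\Omega_X)$. First I would expand the bundle cocycle condition $\tilde A_{ij}\tilde A_{jk}=\tilde A_{ik}$ modulo $\epsilon^2$ to obtain $\delta\{a_{ij}\}=0$, and then expand the Higgs gluing relation $\tilde A_{ij}\tilde\phi_j\tilde A_{ij}^{-1}=\tilde\phi_i$ modulo $\epsilon^2$ to obtain the mixed relation $\psi_j-\psi_i=\pm[a_{ij},\phi]$ on $U_{ij}$. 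These are precisely the two equations saying that $(\{a_{ij}\},\{\psi_i\})\in C^1(\mathcal End\,\mathcal E)\oplus C^0(\mathcal End\,\mathcal E\otimes\Omega_X)$ is a $1$-cocycle in the total complex of the Čech–hypercohomology double complex of $\mathcal C_{\bullet}$, the map $[\bullet,\phi]$ being the complex differential. I would then verify that two such deformations are isomorphic through an automorphism $I+\epsilon s_i$ of the central fibre exactly when their data differ by $(\{s_j-s_i\},\{[s_i,\phi]\})$, i.e. by a total $1$-coboundary. This gives the canonical isomorphism of the tangent space with $\mathbb H^1(X,\mathcal C_{\bullet})$, proving part (1).

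For part (2), the cotangent space is by definition the linear dual $\mathbb H^1(X,\mathcal C_{\bullet})^{\vee}$, so I would invoke Serre duality for the hypercohomology of a two-term complex of locally free sheaves on the smooth projective curve $X$, in the form $\mathbb H^1(X,\mathcal C_{\bullet})^{\vee}\cong\mathbb H^0\!\big(X, R\mathcal{H}om(\mathcal C_{\bullet},\omega_X)\big)$. Computing $R\mathcal{H}om(\mathcal C_{\bullet},\omega_X)$ term by term and using the self-duality $(\mathcal End\,\mathcal E)^{\vee}\cong\mathcal End\,\mathcal E$ furnished by the trace pairing together with $\omega_X=\Omega_X$, I would identify this complex, in degrees $-1,0$, with $\big[\mathcal End\,\mathcal E\to\mathcal End\,\mathcal E\otimes\Omega_X\big]$. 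The one calculation to carry out is that the adjoint of $s\mapsto[s,\phi]$ for the pairing $\langle A,B\rangle=\operatorname{Tr}(AB)$ is $t\mapsto[\phi,t]=-[t,\phi]$, which follows from $\operatorname{Tr}([s,\phi]\,t)=-\operatorname{Tr}(s[t,\phi])$. Thus the Serre-dual complex is exactly $\mathcal C^{\vee}_{\bullet}$ shifted to degrees $-1,0$, whence $\mathbb H^0\!\big(X,R\mathcal{H}om(\mathcal C_{\bullet},\omega_X)\big)\cong\mathbb H^1(X,\mathcal C^{\vee}_{\bullet})$, which is the claim.

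The main obstacle I anticipate is bookkeeping rather than conceptual: aligning the two deformation-theoretic equations with the total differential of the double complex under consistent sign conventions, and tracking the shift and signs through Serre duality so that the dual differential emerges as $[\phi,\bullet]$ exactly as in the definition of $\mathcal C^{\vee}_{\bullet}$. Everything else is routine, and this computation is the Higgs-bundle analogue of the Biswas–Ramanan and Bottacin descriptions of the (co)tangent spaces recalled above, so I would cross-check against the vector-bundle statements in \cite{7} and \cite{8}.
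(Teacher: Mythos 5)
Your proposal is correct and follows essentially the same route as the paper: the tangent space is identified with first-order deformations computed via Čech cocycles for the total complex of $\mathcal C_{\bullet}$ (the paper delegates this to the arguments of \cite{7} and \cite{8}), and the cotangent space is obtained by duality of hypercohomologies, with the trace pairing turning $[\bullet,\phi]$ into $[\phi,\bullet]$. Your write-up merely makes explicit the sign and shift bookkeeping that the paper leaves to the cited references.
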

\begin{proof}
By Kodaira-Spencer theory, the tangent space of $\mathcal M_{HB}$ at a point $(\mathcal E,\phi)$ is the space of infinitesimal deformations of the Higgs bundle. The elements of this space can be expressed as pairs $(s_{ij}, t_i)\in \Gamma(U_{ij}, \mathcal End \mathcal E)\times \Gamma(U_i, \mathcal End \mathcal E\otimes \Omega_X)$ such that 
\begin{enumerate}
\item $s_{ij}A_{jk} +A_{ij}s_{jk}=s_{ik}$,
\item $t_iA_{ij}-A_{ij}t_j=s_{ij}\phi_j-\phi_i s_{ij}$.
\end{enumerate}

Therefore from \cite[proof of theorem 2.3]{7} and \cite[Proposition 3.1.2]{8}, it follows that $(\{s_{ij}\}, \{t_i\})$ defines an element of $\mathbb H^1(\mathcal C_{\bullet})$ and the space of infinitesimal first-order deformations of the Higgs bundle is isomorphic to $\mathbb H^1(\mathcal C_{\bullet})$. 
  
Using duality of hypercohomologies, one can show that the cotangent space of $\mathcal M_{HB}$ is isomorphic to $\mathbb H^1(X, \mathcal C^{\vee}_{\bullet}(\mathcal E, \phi))$ whose elements can be expressed as pairs $(s_{ij}, t_i)\in \Gamma(U_{ij}, \mathcal End \mathcal E)\times \Gamma(U_i, \mathcal End \mathcal E\otimes \Omega_X)$ such that 
such that 
\begin{enumerate}
\item $s_{ij}A_{jk}+A_{ij}s_{jk}=s_{ik}$ as elements of $\Gamma(U_{ijk}, \mathcal End \mathcal E)$,
\item $t_iA_{ij}-A_{ij}t_j=-s_{ij}\phi_j+\phi_i s_{ij}$ as elements of $\Gamma(U_{ij}, \mathcal End \mathcal E\otimes \Omega_X)$.

\end{enumerate}
\end{proof}

\begin{rema}\label{co}
By a different interpretation, as in \cite[proof of Theorem 2.3]{7} and \cite[Proposition 3.1.2]{8}, the elements of the tangent space $\mathbb H^1(\mathcal C_{\bullet})$ can be equivalently described as pairs of co-cycles $(s_{ij}, t_i)$ satisfying the following condition
\begin{enumerate}
\item $s_{ij}+s_{jk}=s_{ik}$,
\item $t_i-t_j=s_{ij}\phi-\phi s_{ij}$.
\end{enumerate}
\end{rema}

Now, consider the short exact sequence of complexes
\begin{equation}
0\rightarrow \mathcal End \mathcal E\otimes \Omega_X[-1] \rightarrow \mathcal C_{\bullet}\rightarrow \mathcal End \mathcal E\rightarrow 0
\end{equation}

The following is the long exact sequence of hypercohomologies of the above short exact sequence.
\begin{equation}\label{LongEx}
0\rightarrow \mathbb H^0(\mathcal C_{\bullet})\rightarrow H^0(\mathcal End \mathcal E)\rightarrow H^0(\mathcal End \mathcal E\otimes \Omega_X)\rightarrow \mathbb H^1(\mathcal C_{\bullet})\rightarrow H^1(\mathcal End \mathcal E)\rightarrow H^1(\mathcal End \mathcal E\otimes \Omega_X)\rightarrow \mathbb H^2(\mathcal C_{\bullet})\rightarrow 0
\end{equation}

There is a natural forgetful morphism from the functor of Higgs bundles on $X$ to the functor of the vector bundles on $X$. Let us denote the morphism by $f$. In the long exact sequence \eqref{LongEx}, the map 

\begin{equation}\label{pro123}
\mathbb H^1(\mathcal C_{\bullet})\rightarrow H^1(\mathcal End \mathcal E)
\end{equation}

is the differential $df$ of the forgetful map $f$. 

\subsubsection{\textbf{Description of the symplectic potential}} \label{Liou440}Consider the morphism of vector spaces 

\begin{equation}\label{Liou1}
\lambda: \mathbb H^1(\mathcal C_{\bullet})\rightarrow \mathbb C
\end{equation}
given by 
$$v\mapsto \phi(df(v)).$$ 
where $df$ is the morphism $\eqref{pro123}$.

If $(s_{ij}, t_i)$ represents the tangent vector $v$ (Proposition \ref{prop6565} and remark \ref{co}), then 
\begin{equation}\label{eq109}
\phi(df(v))=Trace(\phi\circ s_{ij})
\end{equation} 

Notice that $\{Trace(\phi\circ s_{ij})\}_{i\in \Lambda}$ is an $1$-cocycle of $\Omega_X$ and hence an element of $H^1(X, \Omega_X)\cong \mathbb C$. The equality on the right in $\eqref{eq109}$ can be seen using the Trace paring on the sheaf $\mathcal End \mathcal E$. By definition \eqref{lamb}, $\lambda$ is the extension of the Liouville $1$-form $\Omega_{\mathcal M_{VB}}$.

\subsubsection{\textbf{Description of the symplectic form}} 
Consider the morphism of complexes $\mathcal C^{\vee}_{\bullet}\rightarrow \mathcal C_{\bullet}$.

\begin{equation}\label{Pair}
\begin{tikzcd}
\mathcal End \mathcal E \arrow{d}{[\phi, \bullet]}\arrow[" \mathbb 1"]{r} & \mathcal End \mathcal E\arrow{d}{-[\phi, \bullet]}\\
\mathcal End \mathcal E\otimes \Omega_X \arrow["-\mathbb 1\otimes \mathbb 1"]{r} & \mathcal End \mathcal E\otimes \Omega_X
\end{tikzcd}
\end{equation}

It induces a skew-symmetric pairing 
\begin{equation}
\mathbb H^1(\mathcal C^{\vee}_{\bullet})\rightarrow \mathbb H^1(\mathcal C_{\bullet})
\end{equation}

From the diagram \ref{Pair}, it follows that the above morphism can be expressed as 
\begin{equation}\label{omEga12}
(s_{ij}, t_i)\mapsto (s_{ij}, -t_i)
\end{equation} 
in terms of the co-cycle descriptions (Proposition \ref{prop6565} and remark \ref{co}). It induces a bilinear skew-symmetric pairing 

\begin{equation}
\omega: \mathbb H^1(\mathcal C_{\bullet})\times \mathbb H^1(\mathcal C_{\bullet})\rightarrow \mathbb C
\end{equation} 

Alternatively, the pairing can be described as the following composition
\begin{equation}\label{Pyaar1}
\mathbb H^1(\mathcal C_{\bullet})\times \mathbb H^1(\mathcal C_{\bullet})\rightarrow \mathbb H^2(\mathcal C_{\bullet}\otimes \mathcal C_{\bullet})\xrightarrow{Trace} \mathbb H^2(\Omega_X[-1])\cong H^1(X, \Omega_X)\cong \mathbb C
\end{equation}
given by 
\begin{equation}
((s_{ij},t_i),(s'_{ij},t'_i))\mapsto s_{ij}\otimes t'_j-t_i\otimes s'_{ij}\mapsto Trace(s_{ij}\circ t'_j-t_i\circ s'_{ij})
\end{equation}
Therefore, in terms of co-cycle, the pairing $\omega$ is given by 
\begin{equation}\label{Pyaar2}
\omega((s_{ij},t_i),(s'_{ij}, t'_i))=Trace(s_{ij}\circ t'_j-t_i\circ s'_{ij}).
\end{equation}

Using this descriptions, one can show that 
\begin{equation}
\omega=-d\lambda
\end{equation}
For further detail, we refer to \cite{7} and \cite{8}.

\begin{rema}\label{Liou441}
The results discussed in this subsection hold for the moduli of Higgs bundles (or more generally Hitchin pairs) on any curve (not necessarily smooth). This is because the arguments in the proof of \cite[Theorem 2.3]{7} does not require the curve to be smooth. In \S 6, we will show that the moduli of Higgs bundles on a fixed Gieseker curve has a natural symplectic potential and a symplectic form. Moreover, the symplectic potential and the corresponding symplectic form can be described in terms of the co-cycles similarly as in the case of a smooth curve.
\end{rema}
 

\section{\textbf{Functorial log structures on the moduli spaces}}

This section aims to define two natural logarithmic structures on the moduli space $\mathcal M_{GHB, S}$ and to show that they are isomorphic. We refer to \cite{18, 19} for basic definitions and results on log-geometry. 

\subsection{\textbf{Existence of an universal family}} To begin with, we show that there is a universal family over the moduli spaces $\mathcal M_{GVB,S}$ and $\mathcal M_{GHB, S}$. The proof is an easy adaptation of \cite[proof of Theorem 3.2.1]{31}. 

\begin{prop}\label{Universal}
Let $\mathcal X\rightarrow S$ be a family of curves with a section $\sigma$, as in \ref{DegeCourbes}. There exists a universal family of Gieseker vector bundles (Higgs bundles) on the moduli space $\mathcal M_{GVB, S}$ ($\mathcal M_{GHB, S}$). The varieties $\mathcal M_{GVB, S}$ and $\mathcal M_{GHB, S}$ are fine moduli spaces.
\end{prop}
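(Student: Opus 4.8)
The plan is to obtain the universal families by descending the tautological data living on the parameter schemes $\mathcal Y^{st}_S$ and $\mathcal Y^{H,st}_S$ along the free $PGL(N)$-actions to their GIT quotients. Recall from Remark \ref{GIT420} and Remark \ref{GIT421} that both quotient maps $\mathcal Y^{st}_S\to \mathcal M_{GVB,S}$ and $\mathcal Y^{H,st}_S\to \mathcal M_{GHB,S}$ are principal $PGL(N)$-bundles, so by fppf (indeed étale) descent an object on the source descends to the quotient precisely when it carries a $PGL(N)$-equivariant structure. By Remark \ref{Cur300} the universal curve $\Delta_{\mathcal Y^{st}_S}$ is $PGL(N)$-equivariant over $\mathcal Y^{st}_S$, and the action lifts to $\Delta_{\mathcal Y^H_S}$; hence the family of Gieseker curves descends to a flat universal family over $\mathcal M_{GVB,S}$ (and over $\mathcal M_{GHB,S}$) with no difficulty. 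The whole content is therefore in descending the universal bundle.

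The essential point is the tautological bundle $V=j^*\mathcal V_{\mathcal Y^{st}_S}$ on $\Delta_{\mathcal Y^{st}_S}$. The tautological quotient $\mathcal V$ on $Grass(N,n)$ carries only a natural $GL(N)$-linearisation, and the central torus $\mathbb G_m\subset GL(N)$ acts on $V$ by its scaling character of weight one; thus $V$ is $GL(N)$-equivariant but not $PGL(N)$-equivariant, and it does not descend on the nose. I would remove this discrepancy by twisting $V$ by the pullback of a line bundle on $\mathcal Y^{st}_S$ of central weight one. Such line bundles are produced by the determinant-of-cohomology construction: writing $p$ for the projection $\Delta_{\mathcal Y^{st}_S}\to \mathcal Y^{st}_S$, the bundle $\det Rp_*V$ has central weight $N=\chi(V)$, while $\det Rp_*\big(V\otimes \mathcal O_{\mathcal X/S}(-1)\big)$ has central weight $N-n$; their difference $\mathcal L_1$ has weight $n$, and $\mathcal L_2:=\det Rp_*V$ has weight $N$. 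Here the standing hypothesis $(n,d)=1$ is decisive: since $N\equiv \chi\pmod n$ we have $\gcd(n,N)=\gcd(n,\chi)=1$, so choosing integers $a,b$ with $an+bN=1$ the line bundle $\mathcal L:=\mathcal L_1^{\otimes a}\otimes \mathcal L_2^{\otimes b}$ has central weight one. Then $V\otimes p^*\mathcal L^{-1}$ has trivial central $\mathbb G_m$-action, its $GL(N)$-structure factors through $PGL(N)$, and it descends to a Gieseker vector bundle on the universal curve over $\mathcal M_{GVB,S}$.

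For the Higgs case it remains only to descend the Higgs field once the bundle is in hand. The universal field $\phi$ is a $PGL(N)$-invariant section of $p_*(\mathcal End\,\mathcal V\otimes \omega_{\mathcal X^{mod}/ \mathcal Y^H_S})$ (Remark \ref{GIT421}); because it is valued in the endomorphism sheaf, the central scalar acts by conjugation and hence trivially, so $\phi$ already has central weight zero and needs no twisting. Moreover $\mathcal End(V\otimes p^*\mathcal L^{-1})=\mathcal End(V)$, so $\phi$ is still a legitimate Higgs field for the twisted bundle and descends compatibly, giving a universal Gieseker--Higgs bundle over $\mathcal M_{GHB,S}$. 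Finally, the descended families restrict on each geometric fibre to the object parametrised by the corresponding point, hence induce the identity transformation on $S$-valued points; by Yoneda the functors $F^{st}_{GVB,S}$ and $F^{st}_{GHB,S}$ are represented by these families, so the moduli spaces are fine.

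The main obstacle is the second step, the descent of the universal bundle past the central $\mathbb G_m$, and it is exactly there that the coprimality assumption enters in an essential way: the obstruction to lifting the $GL(N)$-structure to $PGL(N)$ is the Brauer class of the gerbe of liftings, and without $(n,d)=1$ this class need not vanish, so no universal bundle exists. Everything else — the descent of the curve family, the descent of the Higgs field, and the Yoneda conclusion — is formal once the weight-one twisting line bundle $\mathcal L$ has been constructed.
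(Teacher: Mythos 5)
Your proposal is correct and follows essentially the same route as the paper: descend the equivariant universal curve along the free $PGL(N)$-action, then kill the central $\mathbb G_m$-weight of the tautological bundle by twisting with a determinant-of-cohomology line bundle built from a relative degree-one polarisation, using $\gcd(n,N)=\gcd(n,\chi)=1$, and finally descend the Higgs field for free since it lives in $\mathcal End$. The paper delegates exactly this weight-one twisting step to Newstead's Lemma 5.11 (after constructing $\mathcal O_{\mathcal X/S}(1)$ by the double-dual argument), so your write-up simply makes explicit what the paper cites.
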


\begin{proof}
First let us recall that the moduli space $\mathcal M_{GVB,S}$ is a GIT quotient of $\mathcal Y_S$ by the action of $PGL(N)$ (remark \ref{GIT420}). There is a universal curve $\Delta_S\subset \mathcal Y_S\times {Grass}(N,n)$ (remark \ref{Cur300}). More precisely, 
\begin{equation}
\Delta_S=\big\{(h, x)|~~h\in \mathcal Y_S, x\in \Delta_h\big\}.
\end{equation}
For an element $h\in \mathcal Y_S$, let $\Delta_h$ denote the fibre of $\Delta_S\rightarrow \mathcal Y_S$ over the point $h$. It is the image of the morphism $X_r\hookrightarrow {Grass}(N,n)$ corresponding to the element $h$. From this description of the universal curve, $\Delta_S$ it follows that it is stable under the action of $PGL(N)$. Consider the $PGL(N)$-equivariant polarisation 
\begin{equation}
\mathcal O_{\mathcal Y_S}(s)\otimes \mathcal O_{Grass(N,n)}(t) 
\end{equation}
over $\mathcal Y_S\times Grass(N,n)$, where $\mathcal O_{\mathcal Y_S}(1)$ and $\mathcal O_{Grass(N,n)}(1)$ are the natural polarizations on $\mathcal Y_S$ (remark \ref{GIT420}) and ${Grass}(N,n)$, respectively and $s/t$ is sufficiently large. We denote by $\mathcal O_{\Delta_S}(s/t)$ the restriction of this polarisation to $\Delta_S$. Using this polarisation, we construct the GIT quotient $\Delta^{ss}_{S}\parallelslant PGL(N)\rightarrow \mathcal Y^{st}_S\parallelslant PGL(N)$. Because of the assumption $g.c.d (\text{rank}, \text{deg})=1$, we have stable$=$semistable on $\mathcal Y_S$. Since $s/t$ is sufficiently large, therefore on $\Delta_S$ we also have stable $=$ semi-stable. In fact, the pre-image of $\mathcal Y^{st}_S$ under the morphism $\Delta_S\rightarrow \mathcal Y_S$ is precisely the set of semistable points in $\Delta_S$. We denote by $\mathcal X^{univ}_S$ the GIT quotient $\Delta^{st}_{S}\parallelslant PGL(N)$ and refer to it as the universal curve over $\mathcal M_{GVB,S}$. Since the action of $PGL(N)$ is free on $\mathcal Y^{st}_S$, from the description of the universal curve $\Delta_S$ it follows that the action of $PGL(N)$ is also free on $\Delta^{st}_S$. Therefore we have the following cartesian square
\begin{equation}
\begin{tikzcd}
\Delta^{st}_S\arrow{r}\arrow{d} & \mathcal Y^{st}_S\arrow{d}\\
\mathcal X^{univ}_S \arrow{r} & \mathcal M_{GVB,S}
\end{tikzcd}
\end{equation}
where the vertical morphisms are $PGL(N)$- principal bundles. 

Now let us discuss the descent of the universal vector bundle. Notice that there exists a universal bundle $U$ over $\Delta^{st}_S$, which is the pullback of the universal bundle over ${Grass}(N,n)$. Let us choose a line bundle over $\mathcal X$ such that the restriction of the line bundle on each fibre of $\mathcal X\rightarrow S$ is of degree one, e.g. the section $\sigma$ gives such a line bundle. Let us denote this line bundle by $\mathcal O_{\mathcal X/S}(1)$. With a choice of such a line bundle of relative degree one,  the rest of the proof follows from similar arguments from \cite[Lemma 5.11]{26}.

The relative moduli of Gieseker-Higgs bundles is a GIT quotient of $\mathcal Y^{H}_S$ by the action of $PGL(N)$ (remark \ref{GIT421}). Over $\mathcal Y^{H}_S$, we can pull back the family of curves $\Delta_S$ and the universal vector bundle $U$ by the forgetful morphism $\mathcal Y^{H}_S\rightarrow \mathcal Y_S$. Let us denote the curve by $\Delta^{H}_S$ and the vector bundle by $U'$. By similar arguments, we can show that the curve and the vector bundle descend to the moduli space $\mathcal M_{GHB, S}$. Notice that we have a tautological section $\phi$ of the vector bundle $\Gamma(\Delta^{H,st}_S, \mathcal End ~~U'\otimes p^*_{\mathcal X} \omega_{\mathcal X/S})$, where $p_{\mathcal X}$ denotes the composite morphism $\Delta_S\rightarrow \mathcal X\times_S \mathcal Y_S\rightarrow \mathcal X$. Also, $\phi$ is $PGL(N)$-equivariant. Therefore it descends to the GIT quotient.
\end{proof}

\begin{rema}\label{Formal1}
The functors $F^{st}_{GVB,S}$ and $F^{st}_{GHB,S}$ are represented by the varieties $\mathcal M_{GVB,S}$ and $\mathcal M_{GHB,S}$. We have natural forgetful morphisms
\begin{equation}
F^{st}_{GHB,S}\rightarrow F^{st}_{GVB,S}\rightarrow F_{GC,S}.
\end{equation}
The natural transformations $F_{GHB,S}\rightarrow F_{GC,S}$ and $F_{GVB,S}\rightarrow F_{GC,S}$ are formally smooth.  (\cite[Appendix: Local theory, I]{25} and \cite[Proposition 5.12]{3}).
\end{rema}

\subsection{\textbf{Log structures on $\mathcal M_{GVB,S}$ and $\mathcal M_{GHB,S}$}} Any normal crossing divisor of a smooth variety induces a natural log structure on the variety. Let us consider the logarithmic structure on the discrete valuation ring $S$ induced by the closed point $\eta_0$. For any log scheme $S$, we denote by $\mathcal Log_S$ the algebraic stack classifying fine log-structures on schemes over the log scheme $S$ (\cite[section 4]{27}). We have the following two natural log structures on $\mathcal M_{GVB,S}$.

\begin{enumerate}
\item{$(\mathcal M_{GVB,S}, \mathcal X^{univ}_S)$:} the curve $\mathcal X^{univ}_S\rightarrow \mathcal M_{GVB,S}$ is a prestable curve (remark \ref{prestable}). So it induces a log-structure on $\mathcal X^{univ}_S$ and $\mathcal M_{GVB,S}$ such that the projection morphism is a morphism of logarithmic schemes \cite[Global construction]{17}. By \cite[section 4]{27}, this log structure induces a morphism $f_{Cur}:\mathcal M_{GVB,S}\rightarrow \mathcal Log_{S}$.

\item{$(\mathcal M_{GVB,S}, \mathcal M_{GVB})$:} the normal crossing divisor $\mathcal M_{GVB}\subset \mathcal M_{GVB,S}$ induces a log structure on $\mathcal M_{GVB,S}$. Similarly, this also induces a morphism $f_{Div}: \mathcal M_{GVB,S}\rightarrow \mathcal Log_S$.
\end{enumerate}

Similarly, we have two log-structures on $\mathcal M_{GHB, S}$.

\begin{rema}
Notice that for the first log-structure to exist we need the universal curve, the existence of which is ensured by the assumption that the curve $\mathcal X\rightarrow S$ has a section $\sigma$, as in \ref{DegeCourbes} and Proposition \ref{Universal}. From now on in this section we will work with this assumption.
\end{rema}

\begin{prop}\label{logs}
The two log-structures on $\mathcal M_{GVB,S}$ ($\mathcal M_{GHB,S}$) are isomorphic.
\end{prop}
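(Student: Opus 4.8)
The plan is to prove the isomorphism locally, by comparing the two log structures stalk by stalk. A morphism of fine log structures over $S$ that induces an isomorphism of characteristic monoids and is compatible with the structure maps to $\mathcal O$ is automatically an isomorphism, and this can be checked étale- or formally-locally on $\mathcal M_{GVB,S}$. Since both $f_{Cur}$ and $f_{Div}$ are maps to $\mathcal Log_S$, it suffices to identify the underlying charts. The first reduction is to strip away the directions that do not affect either log structure: by Remark \ref{Formal1} the forgetful morphism $\mathcal M_{GVB,S}\rightarrow F_{GC,S}$ is formally smooth, so formally locally at a point $[X_r,\mathcal E]$ the space $\mathcal M_{GVB,S}$ splits as a versal deformation of the Gieseker curve $X_r$ over $S$ times a smooth factor coming from the deformations of $\mathcal E$. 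Both log structures are pulled back from the Gieseker-curve data (the divisor $\mathcal M_{GVB}$ being the preimage of the closed point of $S$), so the bundle directions are log-trivial and the comparison reduces to the curve factor.

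Next I would write down the explicit local model on that factor. The Gieseker curve $X_r$ has exactly $r+1$ nodes: the two gluing nodes $x^+=p_1$ and $x^-=p_{r+1}$, together with the $r-1$ internal nodes $p_2,\dots,p_r$ of the chain $R[r]$. Using the deformation theory of Nagaraj–Seshadri together with the regularity of $\mathcal M_{GVB,S}$ and the normal-crossing property of $\mathcal M_{GVB}$ recalled from \cite{25}, the curve factor is formally $\spec k[[z_0,\dots,z_r]]$ with structure morphism to $S=\spec k[[t]]$ given by $t\mapsto z_0z_1\cdots z_r$, where $z_i$ is the smoothing parameter of the $i$-th node. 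In these coordinates the closed fibre $\mathcal M_{GVB}=\{t=0\}$ is cut out by $z_0z_1\cdots z_r=0$, a normal crossing divisor whose $r+1$ branches correspond bijectively to the $r+1$ nodes of $X_r$ (the branch $z_i=0$ being the locus where the $i$-th node is not smoothed). The case $r=0$, where the single node gives a smooth branch $t=z_0$, and the case $r=1$, with two branches, serve as consistency checks.

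Finally I would match the two charts. The divisorial log structure underlying $f_{Div}$ has characteristic monoid $\overline{\mathbb N}^{\,r+1}$ with chart $e_i\mapsto z_i$. By the description of the canonical log structure of a prestable curve (the global construction of \cite{17}, viewed through \cite{27}), the curve log structure underlying $f_{Cur}$ likewise has characteristic monoid $\overline{\mathbb N}^{\,r+1}$ with one generator per node, each $e_i$ mapping to the smoothing parameter $z_i$; moreover the map from the log structure $\mathbb N$ of $S$ is the diagonal $1\mapsto e_0+\cdots+e_r$, which reproduces exactly the relation $t=z_0\cdots z_r$. The two charts therefore agree on the nose, so the induced local isomorphism of log structures is canonical and the local isomorphisms glue to a global one compatible over $S$. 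The same argument applies verbatim to $\mathcal M_{GHB,S}$, since the Higgs field contributes only further smooth, log-trivial deformation directions.

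The main obstacle is establishing the local model of the second paragraph, namely verifying that the node-smoothing parameters coincide with the local branch equations of $\mathcal M_{GVB}$ and that their product is the uniformiser $t$. This is precisely where the regularity of the total space and the explicit Nagaraj–Seshadri deformation theory of Gieseker curves must be invoked; once this identification of $t$ with $\prod_i z_i$ is in hand, the coincidence of the two charts is immediate.
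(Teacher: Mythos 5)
Your proposal is correct and follows essentially the same route as the paper: both reduce to the (formal/Henselian) local ring at a point $[X_r,\mathcal E]$, use the versal deformation space $\spec k[[z_1,\dots,z_{r+1}]]$ of the Gieseker curve with $t\mapsto z_1\cdots z_{r+1}$ to identify the node-smoothing parameters with the local branch equations of the divisor $\mathcal M_{GVB}$, and conclude that the two charts coincide. The paper carries this out by cutting out the nodes of the universal curve with the first Fitting ideal of $\Omega_{\mathcal X^{univ}_S/S}$ and citing Gieseker's Proposition 4.5 for the identity $t=t_1\cdots t_{r+1}$, which is exactly the "main obstacle" you correctly isolate at the end.
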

\begin{proof}
Let $(X_r, \mathcal E)$ be any $\spec k$-valued point of $\mathcal M_{GVB, S}$, where $X_r$ is a Gieseker curve with a chain of rational curves of length $r$ and $\mathcal E$ is a Gieseker vector bundle on $X_r$. Let us denote by $A$ the Henselian local ring of $\mathcal M_{GVB, S}$ at the point $(X_r, \mathcal E)$. Denote the maximal ideal by $m_{_A}$. We have the following diagram
\begin{equation}
\begin{tikzcd}
X_r\arrow[hook]{r}\arrow{d} & \mathcal X^{univ}_{A} \arrow{r} \arrow{d} & \mathcal X^{univ}_S\arrow{d}\\
\spec \frac{A}{m_A}\arrow[hook]{r} & \spec A \arrow{r} & \mathcal M_{GVB,S} 
\end{tikzcd}
\end{equation}

Both the squares are cartesian. Let $D$ be the closed subscheme of $\mathcal X^{univ}_{S}$ defined by the first Fitting ideal ${Fitt}^1 (\Omega_{\mathcal X^{univ}_S/\spec S})$.

We claim the following:

\begin{enumerate}
\item $V({Fitt}^1 \Omega_{\mathcal X^{univ}_A/\spec A})=D_A=\coprod_{i=1}^{r+1} D_{A, i}$, where $D_A:=D\times_{\mathcal M_{GVB, S}} \spec A$ and $D_{A, i}$ are the connected components of $D_A$.
\item around $D_i$,
\begin{equation}
R_i:=\mathcal O^h_{\mathcal X^{univ}_A,D_i}\cong \frac{A[x,y]}{xy-t_i}, ~~\text{for some}~~ t_i\in m_{A},
\end{equation}
\item Set $A_0:=A\otimes \frac{\mathcal O_S}{m_S}$, where $m_S$ denotes the maximal ideal of $\mathcal O_S$. Then $A_0=A/(t_1\cdots t_{r+1})$, i.e., in the Henselian local ring, the normal crossing divisor is the vanishing locus of $(t_1\cdots t_{r+1})$.
\end{enumerate}

The first claim follows from the functoriality of the construction of the Fitting ideals. Second claim follows from the definition of a family of pre-stable curves. To prove the third claim note that the map
\begin{equation}
A\rightarrow \frac{A[x,y]}{xy-t_i}
\end{equation}
is not smooth only over $V(t_i)$. Therefore the map $\mathcal X^{univ}_A\rightarrow \spec A$ is not smooth exactly over $V(t_1\cdots t_{r+1})$.

Let us denote the point $(X_r, \mathcal E)$ of $\mathcal M_{GVB,S}$ by $p$. For simplicity, let us denote $\mathcal M_{GVB,S}$ by $\mathcal M_S$. So $A={\mathcal O}^h_{\mathcal M_{S},p}$. We have the following inclusions
\begin{equation}
\mathcal O_{S,s}\rightarrow \mathcal O_{\mathcal M_S,p}\rightarrow \mathcal O^{h}_{\mathcal M_S,p}\rightarrow \hat{\mathcal O}_{\mathcal M_S,p}
\end{equation}

For our purpose, we can assume that $S=\spec k[|t|]$. The functor of Artin rings $F_{GC,S}$ has a versal deformation space given by $W:=\spec k[|z_1,\dots, z_{r+1}|]$ and a versal family of Gieseker curves $B$ over $W$ (\cite[Lemma 4.2]{12} and \cite["Appendix: Local Theory ", II. (a),(b), (c),(d)]{25}). There is a morphism $W\rightarrow S$ given by $t\mapsto z_1\cdots z_{r+1}$ and the fiber over $t=0$ is the versal space for the absolute functor $F_{GC}$. From the construction of $B$ and $W$, it follows that the fiber over $t=0$ is precisely the locus in $W$ over which the morphism $B\rightarrow W$ is not smooth.

Now the restriction of the universal modification $\mathcal X^{univ}_S$ is a modification on $\spec \hat{\mathcal O}_{\mathcal M_S,p}$. By the versality property, there exists a formally smooth morphism $v:\spec \hat{\mathcal O}_{\mathcal M_S,p}\rightarrow W$ such that $\mathcal X^{univ}_S\cong v^*B$. It follows from \cite[Proposition 4.5]{12} that $t\cdot \hat{\mathcal O}_{\mathcal M_S,p}=(t_1\cdots t_{r+1})\hat{\mathcal O}_{\mathcal M_S,p}$.
Since ${\mathcal O}_{\mathcal M_S,p}^{h}\rightarrow \hat{\mathcal O}_{\mathcal M_S,p}$ is faithfully flat, therefore $t\cdot {\mathcal O}_{\mathcal M_S,p}^h=(t_1\cdots t_{r+1}){\mathcal O}_{\mathcal M_S,p}^h$. This proves the claim $(3)$. Since the local equations of the divisor coincide with the local equation of the nodes of the universal curve, the two log structures are isomorphic. The proof for $\mathcal M_{GHB,S}$ is similar.

\end{proof}

\begin{rema}
Let us consider the logarithmic structure on the discrete valuation ring $S$ induced by the closed point $\eta_0$. Since the morphisms $\mathcal M_{GVB, S}\rightarrow S$ and $\mathcal M_{GHB, S}\rightarrow S$ are semistable degenerations; therefore, they are log-smooth morphisms. In other words, the induced morphisms $\mathcal M_{GVB, S}\rightarrow \mathcal Log_S$ and $\mathcal M_{GHB, S}\rightarrow \mathcal Log_S$ are smooth morphisms of algebraic stacks \cite[3.7 (Verification of (1.1 (iii))).]{28}.
\end{rema}

\section{\textbf{Relative Log-tangent space}}

\subsubsection{\textbf{Relative Log-tangent space}}
In this subsection we want to compute the relative tangent space and relative log-tangent space of $\mathcal M_{GVB,S}\rightarrow S$ and $\mathcal M_{GHB,S}\rightarrow S$ using log-deformation theory \cite{18},\cite{27}. 

\begin{rema}
For this purpose, it is enough to concentrate on the special fibres $\mathcal M_{GVB}\rightarrow \spec k$ and $\mathcal M_{GHB}\rightarrow \spec k$ instead of the relative case over $S$. Therefore, we see that to compute the relative log-tangent space we can replace $S$ by a suitable etale neighbourhood of the closed point of $S$ as in \ref{DegeCourbes}.
\end{rema}

Let $\pi_r: X_r\rightarrow X_0$ be a $\spec k$-valued point of $F_{GC}$. From \cite[ " Global Construction", Proposition 2.1]{17}, it follows that there are canonical induced log-structures on $X_r$ and $X_0$ and as well as on $\spec k$ such that the arrows $X_r\rightarrow \spec k$ and $X_0\rightarrow \spec k$ are log-smooth. It is straightforward to check that the log structure on $\spec k$ induced by the curve $X_r\rightarrow \spec k$ is the same as the log-structure induced by the following pre-log structure.

\begin{equation}\label{Log1001}
\oplus_{i=1}^{i=r+1} \overline{\mathbb N}\rightarrow k  
\end{equation}
which sends
\begin{center}
$e_i\mapsto 0$,
\end{center}
where $e_i$ is the $i$-th basis element of $\oplus_{i=1}^{i=r+1} \overline{\mathbb N}$.

\begin{lema}\label{Log101}
Let $\mathcal X$ be a family of Gieseker curves over $\spec k[\epsilon]$ such that the fiber over the closed subscheme $\spec k$ is the Gieseker curve $\pi_r: X_r\rightarrow X_0$. If the induced log structure (\cite[Proposition 2.1]{17}) on $\spec k[\epsilon]$ is isomorphic to the pull back of the log structure of $\spec k$, defined above in \eqref{Log1001} under the natural projection map $\spec k[\epsilon]\rightarrow \spec k$, then the deformation is trivial i.e., $\mathcal X\cong X_r\times \spec k[\epsilon]$.
\end{lema}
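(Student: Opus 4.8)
The plan is to combine the local deformation theory of the nodes of $X_r$ with a direct computation of the log structure that $\mathcal X$ induces on the base $\spec k[\epsilon]$. First I would invoke the local theory of Gieseker curves recalled in the proof of Proposition~\ref{logs} (see \cite[Lemma 4.2]{12} and \cite[Appendix: Local Theory]{25}): the deformation functor of $\pi_r\colon X_r\to X_0$ has a smooth versal base $W=\spec k[|z_1,\dots,z_{r+1}|]$ whose coordinates $z_1,\dots,z_{r+1}$ are exactly the smoothing parameters of the $r+1$ nodes $p_1=x^+, p_2,\dots, p_r, p_{r+1}=x^-$. In particular the tangent space to this functor at $[X_r]$ is the $(r+1)$-dimensional space of node-smoothing directions and has no equisingular directions, since $\tilde X_0$ is held fixed and a chain of $\mathbb P^1$'s is rigid. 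Thus $\mathcal X$ is classified by a tuple $(a_1,\dots,a_{r+1})\in k^{r+1}$, and \'etale-locally near the $i$-th node it is cut out by $x_iy_i=a_i\epsilon$ inside $\spec k[\epsilon][x_i,y_i]$; the parameter has no constant term precisely because the closed fibre is the nodal curve $X_r$ rather than a partial smoothing of it.

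Next I would read off the canonical log structure on $\spec k[\epsilon]$ attached to $\mathcal X$ via the global construction of \cite[Global construction, Proposition 2.1]{17}: each node contributes one chart $\overline{\mathbb N}\to k[\epsilon]$ sending the generator to the corresponding smoothing parameter, so the induced pre-log structure is
\begin{equation}
\bigoplus_{i=1}^{r+1}\overline{\mathbb N}\longrightarrow k[\epsilon],\qquad e_i\mapsto a_i\epsilon.
\end{equation}
This is to be compared with the pullback along $\spec k[\epsilon]\to \spec k$ of the log structure \eqref{Log1001}, namely the pre-log structure $e_i\mapsto 0$.

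The heart of the argument is a monoid computation, carried out one summand at a time, showing that these two log structures are isomorphic if and only if every $a_i$ vanishes. Since $a\epsilon$ is nilpotent it is never a unit, so the log structure associated to $\overline{\mathbb N}\xrightarrow{1\mapsto a\epsilon}k[\epsilon]$ has underlying monoid $\overline{\mathbb N}\oplus k[\epsilon]^{\times}$, exactly as for the trivial structure $1\mapsto 0$. Any isomorphism of log structures covering $\mathrm{id}_{k[\epsilon]}$ restricts to the identity on $k[\epsilon]^{\times}$ and induces the identity on the characteristic monoid $\overline{\mathbb N}$, so it must send the generator $(1,1)$ to some $(1,v)$ with $v\in k[\epsilon]^{\times}$; compatibility with the two structure maps then forces $a\epsilon\, v=0$, and writing $v=v_0+v_1\epsilon$ with $v_0\neq 0$ this reads $a v_0\,\epsilon=0$, i.e. $a=0$. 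Hence the hypothesis that the induced log structure be isomorphic to the pullback of \eqref{Log1001} forces $a_1=\dots=a_{r+1}=0$.

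Feeding this back into the first step, the deformation class $(a_1,\dots,a_{r+1})$ is zero, so $\mathcal X$ is the trivial first-order deformation and $\mathcal X\cong X_r\times\spec k[\epsilon]$. I expect the main obstacle to be the monoid computation of the third paragraph: one has to check that an infinitesimal smoothing $a\epsilon$ which is invisible on the underlying scheme is nonetheless detected by the log structure, which is exactly the bookkeeping the logarithmic formalism is designed to provide. The remaining inputs---the local normal form of a node and the rigidity of Gieseker curves away from their nodes---are already available from \cite{12} and \cite{25}.
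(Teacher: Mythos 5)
Your proposal is correct, and its core — reading off the induced log structure on $\spec k[\epsilon]$ as $e_i\mapsto a_i\epsilon$ via the charts at the nodes, and then running a monoid computation to show this is isomorphic to the pullback of \eqref{Log1001} only if every $a_i=0$ — is the same as the paper's. (Your "one summand at a time" reduction is harmless: an automorphism of the characteristic monoid $\overline{\mathbb N}^{r+1}$ can at worst permute the generators, and the conclusion $a_i=0$ survives.) Where you genuinely diverge is in converting the vanishing of the smoothing parameters into triviality of the deformation. You front-load this into the versality statement, taking from \cite{12} and \cite{25} that the deformation functor of the modification has versal base $\spec k[|z_1,\dots,z_{r+1}|]$ with coordinates exactly the node-smoothing parameters, so that the class of $\mathcal X$ \emph{is} the tuple $(a_1,\dots,a_{r+1})$ and nothing more. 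The paper instead derives this: once all $\lambda_i=0$ the deformation is locally trivial, hence lies in $H^1(X_r,\mathcal Hom(\Omega_{X_r},\mathcal O_{X_r}))$ by the local-to-global Ext sequence, and this group injects into $H^1(X_0,\mathcal Hom(\Omega_{X_0},\mathcal O_{X_0}))$ (Gieseker, Corollary 4.4), where the image of the class is the obstruction to extending the contraction to $X_0\times\spec k[\epsilon]$ — an obstruction that vanishes because a modification comes equipped with that map. Be aware that your parenthetical justification for the absence of equisingular directions ("$\tilde X_0$ is held fixed and a chain of $\mathbb P^1$'s is rigid") is not by itself a proof: $H^1(X_r,T_{X_r})\neq 0$, so locally trivial deformations of the abstract curve abound, and it is precisely compatibility with the map to $X_0$, made quantitative by Gieseker's injectivity statement, that kills them. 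Since the versality result you cite contains this, your argument is complete; what each route buys is clear — yours is shorter and leans on the cited local theory, while the paper's makes the key rigidity mechanism explicit and self-contained.
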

\begin{proof}
The vanishing locus of the first Fitting ideal of the relative cotangent sheaf of the morphism $\mathcal X\rightarrow \spec k[\epsilon]$ has $r+1$ components. Let us denote them by $D_1, \dots, D_{r+1}$. Let us denote by $U^{-}_i$ the complement of the node $D_i$ and by $U^{+}_i$ the complement of the closed subset $\coprod_{_{j\neq i}} D_j$. Then $\{U^{+}_i, U^{-}_i\}$ is a Zariski-open covering of $\mathcal X$. Etale locally, around $D_i$ we have
\begin{equation}\label{logform}
\mathcal O_{U^+_i}\cong \frac{k[x_i, y_i, \epsilon]}{x_iy_i-\lambda_i\epsilon}. 
\end{equation}
We attach a log structure defined by a pre-log structure $\overline{\mathbb N}^2\rightarrow \frac{k[x_i, y_i, \epsilon]}{x_iy_i-\lambda_i\epsilon}$ which sends $(1,0)$ to $x_i$ and $(0,1)$ to $y_i$. On $U^{-}_i$ we consider the log structure defined by the pre-log structure $\overline{\mathbb N}\rightarrow \mathcal O_{_{U^{-}_i}}$ which maps $1\rightarrow \lambda_i\epsilon$; these log structures can be glued along the intersection by using the diagonal homomorphism $\overline{\mathbb N}\rightarrow \overline{\mathbb N}^2$. We denote the resulting log structure by $\mathcal M_i$. The induced log structure on $\spec k[\epsilon]$ is the log structure defined by the pre-log structure $\alpha_i:\overline{\mathbb N}\rightarrow k[\epsilon]$ given by $1\rightarrow \lambda_i\epsilon$. Let us denote the log structure by $\mathcal L_i$. Then we see that $\mathcal L_i\cong \overline{\mathbb N}\otimes_{\alpha_i^{-1}(k[\epsilon]^*)} k[\epsilon]^*\cong \overline{\mathbb N}\oplus k[\epsilon]^*$.

Finally the induced log structure on $\spec k[\epsilon]$ is the amulgumated sum
\begin{equation}
\mathcal L_{k[\epsilon]}:=\mathcal L_1\otimes_{k[\epsilon]^*} \cdots \otimes_{k[\epsilon]^*} \mathcal L_{r+1}
\end{equation}

It is isomorphic to the log structure associated with the prelog structure
\begin{equation}
\oplus_{i=1}^{r+1} \overline{\mathbb N}\rightarrow k[\epsilon]
\end{equation}
given by
\begin{center}
$e_i:=(0,\dots, \underbrace{1}_{i-\text{th position}} ,\dots,0)\mapsto \lambda_i\epsilon$
\end{center}
Therefore it is isomorphic to the pull back of the log structure on $\spec k$, defined above in eq.\ref{Log1001} under the natural projection map $\spec k[\epsilon]\rightarrow \spec k$ if and only if $\lambda_i=0$ for all $i=1,\dots,r+1$.

The space of infinitesimal deformations of the nodal curve $X_r$ is isomorphic to $Ext^1(\Omega_{X_r}, \mathcal O_{X_r})$. Using Local-to-global spectral sequence \cite[eq. 1.2, page 169]{1}, we get

\begin{equation}
0\rightarrow H^1(X_r, \mathcal Hom(\Omega_{X_r}, \mathcal O_{X_r})) \rightarrow Ext^1(\Omega_{X_r}, \mathcal O_{X_r})\rightarrow H^0(X_r, \mathcal Ext^1(\Omega_{X_r}, \mathcal O_{X_r}))\cong \oplus_{i=1}^{i=r+1} Ext^1(\Omega_{X_r,p_i}, \mathcal O_{X_r,p_i})\rightarrow 0
\end{equation}

Since $\lambda_i=0$ for all $i=1,\dots,r+1$, from \eqref{logform}, it follows that the infinitesimal deformation $\mathcal X$ is an element of $H^1(X_r, \mathcal Hom(\Omega_{X_r}, \mathcal O_{X_r}))$. Now from \cite[Corollary 4.4]{12} we have the following inclusion
\begin{equation}
H^1(X_r, \mathcal Hom(\Omega_{X_r}, \mathcal O_{X_r}))\hookrightarrow H^1(X_0, \mathcal Hom(\Omega_{X_0}, \mathcal O_{X_0}))
\end{equation}
Moreover, the image of any $1$-cocycle/infinitesimal first order deformation $\mathcal X'$ of $X_r$ under this inclusion is the obstruction to extending the map $X_r\rightarrow X_0$ to a map $\mathcal X'\rightarrow X_0\times \spec k[\epsilon]$. But since the deformation $\mathcal X$, by definition comes with a morphism to $X_0\times \spec k[\epsilon]$, therefore the image under the inclusion is $0$. Therefore $\mathcal X\cong X_r\times \spec k[\epsilon]$.
\end{proof}

\begin{rema}\cite[Example 2.5,(2)]{19}\label{Log102}
Since our base field $k$ is algebraically closed, there is a bijection between the following two sets:
\begin{equation}
\left\{
\begin{array}{@{}ll@{}}
\text{Isomorphism classes of integral}\\
\text{ log-structures on}~~\spec k
\end{array}\right\}
\leftrightarrow\left\{
\begin{array}{@{}ll@{}}
\text{Isomorphism classes of integral monoids}\\
\text{having no invertible elements other than }~~0
\end{array}\right\}
\end{equation}
given by the following:

\begin{enumerate}
\item Given an integral monoid $P$ such that $P^{*}=\{0\}$, the corresponding log structure is $k^*\oplus P$ with
\begin{equation}
k^*\oplus P\rightarrow k
\end{equation}
$$
(\lambda, p)\mapsto
\begin{cases}
\lambda ~~\text{if}~~p=0,\\
0~~\text{otherwise}
\end{cases}
$$
\item Given an integral log-structure $\alpha:\mathcal P\rightarrow k$, the correspoding integral monoid is $P:=\mathcal P/\alpha^{-1}(k^*)$. Consider the zero map
\begin{equation}
P\rightarrow k
\end{equation}
Then the associated log structure $P\oplus k^*\cong \mathcal P$.
\end{enumerate}
\end{rema}
\begin{lema}\label{Log103}
Let $Q_{_0}$ be an integral monoid with $0$ as its only unit. Let $\mathcal X\rightarrow \spec k[\epsilon]$ be a family such that the fiber over $\spec k$ is isomorphic to $X_r$. The log structure on $\spec k[\epsilon]$ induced by the family $\mathcal X$ is isomorphic to the log structure associated with the pre-log structure
\begin{equation}
\beta: Q_{_0}\rightarrow k[\epsilon]
\end{equation}
$$
q\mapsto
\begin{cases}
1 ~~\text{if}~~q=0,\\
0~~\text{otherwise}
\end{cases}
$$
if and only if $Q_{_0}\cong \overline{\mathbb N}^{r+1}$ and the pre-log structure
\begin{equation}
\beta: \overline{\mathbb N}^{r+1}\rightarrow k[\epsilon]
\end{equation}
is given by
$$
e_i\mapsto 0 ~~\hspace{3cm}~~ \text{for all} ~~i=1,\dots, r+1
$$
\end{lema}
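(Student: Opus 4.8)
The plan is to reduce the statement to the explicit description of the induced log structure that is already obtained in the proof of Lemma \ref{Log101}. There, using the nodes $D_1,\dots,D_{r+1}$ of the central fibre and the local models \eqref{logform}, one sees that the log structure on $\spec k[\epsilon]$ induced by $\mathcal X$ is the one associated to a pre-log structure
\[
\beta_{\mathcal X}: \oplus_{i=1}^{r+1}\overline{\mathbb N}\longrightarrow k[\epsilon],\qquad e_i\mapsto \lambda_i\epsilon,
\]
for suitable scalars $\lambda_i\in k$. The proof will then compare $\beta_{\mathcal X}$ with the given $\beta: Q_{_0}\to k[\epsilon]$ on two levels: that of the associated characteristic monoid, and that of the structure map into $k[\epsilon]$.

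For the forward implication I would first pass to characteristic monoids. Since every non-zero element of $\oplus_i\overline{\mathbb N}$ is sent by $\beta_{\mathcal X}$ into the nilpotent ideal $\epsilon k[\epsilon]$, hence to a non-unit, we have $\beta_{\mathcal X}^{-1}(k[\epsilon]^*)=\{0\}$; therefore the associated log structure is the split monoid $(\oplus_i\overline{\mathbb N})\oplus k[\epsilon]^*$, whose characteristic monoid is $\overline{\mathbb N}^{r+1}$. The same computation applied to $\beta$ shows that the characteristic monoid of the $\beta$-log structure is exactly $Q_{_0}$. As the characteristic monoid is invariant under isomorphism of log structures, any isomorphism forces $Q_{_0}\cong\overline{\mathbb N}^{r+1}$, which is the first assertion of the right-hand side. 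Comparing structure maps then pins $\beta$ down: the $\beta$-log structure sends every non-zero element of the monoid to $0\in k[\epsilon]$, while $\beta_{\mathcal X}$ sends $e_i$ to $\lambda_i\epsilon$, so the isomorphism can exist only if $\lambda_i=0$ for every $i$.

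For the converse, once $Q_{_0}\cong\overline{\mathbb N}^{r+1}$ and $\beta$ is the map $e_i\mapsto 0$, I would use Remark \ref{Log102} to identify the $\beta$-log structure with the pullback to $\spec k[\epsilon]$ of the canonical log structure \eqref{Log1001} on $\spec k$. By Lemma \ref{Log101} the induced log structure of $\mathcal X$ coincides with this pullback precisely when all $\lambda_i$ vanish, that is when $\mathcal X\cong X_r\times\spec k[\epsilon]$, and in that case the two log structures agree. The main obstacle, and the point requiring care, is exactly this last matching: isomorphism of log structures is strictly finer than isomorphism of the underlying characteristic monoids, so the coincidence of the two copies of $\overline{\mathbb N}^{r+1}$ does not by itself suffice, and one must track the scalars $\lambda_i$ through the structure map. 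The role of Lemma \ref{Log101} is precisely to convert the algebraic condition $\lambda_i=0$ into the geometric triviality of the family.
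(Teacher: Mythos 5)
The paper gives no argument for this lemma (the proof is ``left to the reader''), so there is nothing to compare your proposal against; judged on its own it is correct and is the natural way to fill the gap. The reduction to the pre-log structure $e_i\mapsto\lambda_i\epsilon$ computed inside the proof of Lemma \ref{Log101}, the observation that $\beta_{\mathcal X}^{-1}(k[\epsilon]^*)=\{0\}$ so that the associated log structure splits as $\overline{\mathbb N}^{r+1}\oplus k[\epsilon]^*$ with characteristic monoid $\overline{\mathbb N}^{r+1}$, and the identification of the characteristic monoid of the $\beta$-structure with $Q_0$ (using $Q_0^*=\{0\}$) together force $Q_0\cong\overline{\mathbb N}^{r+1}$, exactly as you say. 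Your structure-map argument is also sound and worth writing out: an isomorphism of log structures must send $(e_i,1)$ to some $(q_i,u_i)$ with $q_i\neq 0$, because its image under the structure map is the non-unit $\lambda_i\epsilon$ whereas $(0,u_i)$ maps to the unit $u_i$; then $\beta(q_i)u_i=0$ forces $\lambda_i=0$. One point you handle implicitly but should state plainly: read literally, the \emph{if} direction of the lemma is false for a non-trivial deformation $\mathcal X$, since the right-hand condition on $(Q_0,\beta)$ says nothing about $\mathcal X$ while the left-hand side fails whenever some $\lambda_i\neq 0$. The lemma is only invoked in the paper in the \emph{only if} direction (to pin down which log structures pulled back from $\spec k$ can occur), and your proof correctly isolates the vanishing of all $\lambda_i$ --- i.e.\ the triviality of the family, Lemma \ref{Log101} --- as the extra ingredient the converse actually requires.
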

\begin{proof}
We leave it to the reader.
\end{proof}

Consider the following morphism of pre-log structures
\begin{equation}
\begin{tikzcd}
& e_i\arrow{r} & 0 ~~\text{for all}~~i=1,\cdots, r+1\\
e_1+..+e_{r+1} & \overline{\mathbb N}^{r+1}\arrow{r} & k[\epsilon]\\
e\arrow{u} & \overline{\mathbb N} \arrow{u}\arrow{r} & k\arrow{u}\\
& e\arrow{r} & 0
\end{tikzcd}
\end{equation}

Let us denote the log structure induced on $\spec k[\epsilon]$ by $M$. We denote the log scheme $(\spec k[\epsilon], M)$ by $X$, the log scheme $(\spec k, \overline{\mathbb N})$ by $Y$ and the log morphism above by $f:X\rightarrow Y$.
\begin{lema}\label{Auto101}
The group of the automorphisms $\phi$ of the log structure $M$ on $\spec k[\epsilon]$ satisfying the following conditions
\begin{enumerate}
\item the automorphism of the underlying scheme $\spec k[\epsilon]$ is identity,
\item the restriction $\phi_0$ of $\phi$ on the closed log-subscheme $(\spec k, j^*M)$ is the identity morphism, where $j$ is the closed immersion $\spec k\hookrightarrow \spec k[\epsilon]$,
\item the automorphism $\phi$ commutes with the log morphism $f$
\end{enumerate}
has the structure of a $k$-vector space of dimension $r$. We denote this group by ${Aut}^{^{{Inf}}}_{Y}(M)$.
\end{lema}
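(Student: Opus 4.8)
The plan is to make the log structure $M$ fully explicit, parametrise all automorphisms lying over $\mathrm{id}_{\spec k[\epsilon]}$ by a permutation together with one unit of $k[\epsilon]$ per branch, and then to check that conditions (2) and (3) collapse this datum onto the hyperplane $\sum_i c_i=0$ inside $k^{r+1}$.

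First I would record the explicit shape of $M$. By Remark \ref{Log102}, $M$ is the log structure associated to the pre-log structure $\overline{\mathbb N}^{r+1}\to k[\epsilon]$, $e_i\mapsto 0$ of \eqref{Log1001}. Since every non-zero element of $\overline{\mathbb N}^{r+1}$ maps to the non-unit $0$, the preimage of $k[\epsilon]^*$ under the chart is just $\{0\}$, and hence $M\cong\overline{\mathbb N}^{r+1}\oplus k[\epsilon]^*$ as a sheaf of monoids, with structure map $\alpha(m,v)=\alpha_0(m)\,v$, where $\alpha_0(m)=1$ if $m=0$ and $\alpha_0(m)=0$ otherwise. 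Write $\tilde e_i:=(e_i,1)$ for the canonical chart.

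Next I would parametrise the automorphisms. An automorphism $\phi$ satisfying (1) is an isomorphism of the sheaf of monoids $M$ with $\alpha\circ\phi=\alpha$; the log-structure axiom $\alpha^{-1}(k[\epsilon]^*)\xrightarrow{\sim}k[\epsilon]^*$ forces $\phi$ to be the identity on the unit part $k[\epsilon]^*$. Therefore $\phi$ is determined by its values on the chart, say $\phi(\tilde e_i)=(e_{\pi(i)},u_i)$ for a permutation $\pi$ of $\{1,\dots,r+1\}$ and units $u_i\in k[\epsilon]^*$; moreover $\alpha\circ\phi=\alpha$ holds automatically because $\alpha(\tilde e_i)=0=\alpha(\phi(\tilde e_i))$. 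I would then impose the two remaining conditions. Condition (2), that the restriction $\phi_0$ to the closed fibre $\epsilon=0$ be the identity, forces the induced map on the characteristic monoid to be trivial, so $\pi=\mathrm{id}$, and forces each $u_i$ to reduce to $1\in k^*$, i.e. $u_i=1+c_i\epsilon$ with $c_i\in k$. Condition (3), compatibility with $f$, amounts to $\phi$ fixing the image $\prod_i\tilde e_i$ of the generator of the log structure on $Y$ read off from the defining diagram; since $\phi(\prod_i\tilde e_i)=\big(e_1+\cdots+e_{r+1},\ \prod_i u_i\big)$, this is equivalent to $\prod_i u_i=1$, that is $\sum_{i=1}^{r+1}c_i=0$.

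Finally I would identify the group with the hyperplane $\{(c_1,\dots,c_{r+1})\in k^{r+1}:\sum_i c_i=0\}$, which is a $k$-vector space of dimension $r$. Because $\epsilon^2=0$, composing two such automorphisms multiplies the units coordinatewise, $(1+c_i\epsilon)(1+c_i'\epsilon)=1+(c_i+c_i')\epsilon$, so composition corresponds to addition of the tuples $(c_i)$, and scalar rescaling $(c_i)\mapsto(\lambda c_i)$ is realised by $u_i\mapsto 1+\lambda c_i\epsilon$; this transports the $k$-vector space structure. The main point to get right is the bookkeeping in the third step: verifying that fixing the units is genuinely forced, that the structure-map compatibility imposes no further constraint (this is exactly where $\alpha(e_i)=0$ is used), and that condition (2) really detects and kills the permutation $\pi$ while condition (3) contributes precisely the single linear relation $\sum_i c_i=0$.
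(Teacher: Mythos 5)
Your proposal is correct and follows essentially the same route as the paper: identify $M$ with $\overline{\mathbb N}^{r+1}\oplus k[\epsilon]^*$, observe that an automorphism is determined by its values on the generators $(e_i,1)$ and the units, use condition (2) to pin the monoid part and force $u_i=1+c_i\epsilon$, and use condition (3) to extract the single relation $\sum_i c_i=0$, yielding the hyperplane $k^r$ with addition given by composition. Your treatment is marginally more careful in two spots (explicitly ruling out a nontrivial permutation of the $e_i$ via condition (2), and deriving triviality on the unit part from the log-structure axiom $\alpha^{-1}(k[\epsilon]^*)\xrightarrow{\sim}k[\epsilon]^*$ rather than just from $\phi$ fixing the identity element), but the argument is the same.
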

\begin{proof}
Notice that the log structure $M$ is isomorphic to $\overline{\mathbb N}^{r+1}\oplus k[\epsilon]^*$, where the monoid product is given by addition in the first component and multiplication in the second component. The monoid morphism $M\rightarrow k[\epsilon]$ is given by
$(0, a\epsilon+b)\mapsto (0, a\epsilon+b)$ and $(e_i, 1)\mapsto 0$ for all $i=1,\cdots, r+1$.
Similarly, the log structure on $\spec k$ associated to the prelog structure $\overline{\mathbb N}\xrightarrow{0} k$ is isomorphic to $\overline{\mathbb N}\oplus k^*$ and the monoid morphism is the morphism $\overline{\mathbb N}\oplus k^*\rightarrow k$ which sends $(0, \lambda)\mapsto \lambda$ and $(e, 1)\mapsto 0$.

Since the restriction $\phi_0$ of the automorphism

$$\overline{\mathbb N}^{r+1}\oplus k[\epsilon]^*\xrightarrow{\phi}\overline{\mathbb N}^{r+1}\oplus k[\epsilon]^*$$

to the reduced log subsceme $(\text{Spec}~~k, \overline{\mathbb N}^{r+1}\oplus k^*)$ is the identity morphism. Therefore $\phi_0(e_i,1)=(e_i,1)$ for $i=1,\cdots, r+1$. Therefore the first factor of $\phi(e_i, a+b\epsilon)$ also must be $e_i$ for every $i=1,\cdots,r+1$.

Since $\phi$ is a monoid isomorphism therefore $\phi((0,1))=(0,1)$, because $(0,1)$ is the identity element in this monoid. Therefore $\phi(0, a+b\epsilon)=a+b\epsilon$.

Now notice that the monoid $\overline{\mathbb N}^{r+1}\oplus (k[\epsilon])^*$ is generated by the elements of the following forms $\{(e_i, 1)|~~i=1,\cdots, r+1\}$ and $\{(0, a+b\epsilon)|~~a+b\epsilon\in k[\epsilon]^*\}$. Therefore the images of these generators under $\phi$ determine the automorphism $\phi$. Since the first factor of $\phi(e_i, 1)$ must be $e_i$, therefore $\phi(e_i, 1)=(e_i, c_i\cdot (1+\lambda_i\epsilon))$ for some $c_i\in k^*$ and $\lambda_i\in k$. But since $\phi_0$ is the identity morphism therefore we see that $c_i=1$ for all $i=1,\cdots,r+1$. Therefore $\phi(e_i, 1)=(e_i, 1+\lambda_i\epsilon)$, for some $\lambda_i\in k$.

Since the isomorphism $\phi$ commutes with the log morphism $f$, we must have $\phi(e_1+\cdots+e_{r+1}, 1)=(e_1\cdots+e_{r+1}, 1)$. Since $\phi$ is automorphism of a monoid we have $\phi(e_1+\cdots+e_{r+1}, 1)=(e_1+\cdots+e_{r+1}, (1+\lambda_1\epsilon)\cdots(1+\lambda_{r+1}\epsilon))=(e_1+\cdots+e_{r+1}, 1+(\lambda_1+\cdots+\lambda_{r+1})\epsilon)$. Therefore the sum $\lambda_1+\cdots+\lambda_{r+1}=0$.

Therefore the group of such automorphism is isomorphic to the underlying additive group of the vector space $k^{r}$. If $\phi\in Aut^{{Inf}}_Y(M)$ such that $\phi(e_1,1)=(e_1, 1+\lambda_i\epsilon )$ and $\lambda \in k$ any scalar then we define $(\lambda\star \phi) (e_i,1):=(e_i, 1+\lambda\cdot \lambda_i\cdot \epsilon)$ for all $i=1,\cdots , r+1$. Therefore the group has the structure of a vector space and is isomorphic to $k^{r}$.
\end{proof}

The following two lemmas follow from \cite[Lemma 4.6, 4.8]{12}. Nevertheless, we include slightly different proofs, which are more suitable for our purpose.

\begin{lema} \label{Equ1011}
\begin{enumerate}
\item Let $\mathcal E$ be a stable Gieseker vector bundle on the curve $X_r$ and $\psi\in {Aut}_{X_0}(X_r)$. Then $\mathcal E\ncong \psi^*\mathcal E$.
\item Let $(\mathcal E,\phi)$ be a stable Gieseker-Higgs bundle on the curve $X_r$ and $\psi\in {Aut}_{X_0}(X_r)$. Then $\mathcal E\ncong \psi^*\mathcal E$.
\end{enumerate}
\end{lema}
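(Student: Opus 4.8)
The plan is to argue by contradiction, exploiting that a nontrivial $\psi \in \mathrm{Aut}_{X_0}(X_r)$ must scale some component of the chain and that stable Gieseker(-Higgs) objects push forward to \emph{simple} objects on $X_0$. First I would record the shape of the automorphism group. Since $\psi$ is the identity on $\tilde X_0$, it fixes the attaching points $x^+ = p_1$ and $x^- = p_{r+1}$, hence every node $p_j$ and every component $R[r]_i$; on each $R[r]_i \cong \mathbb P^1$ it is a scaling fixing the two special points, so $\mathrm{Aut}_{X_0}(X_r) \cong \mathbb{G}_m^r$. Assuming $\psi \neq \mathrm{id}$, after relabelling there is a component $R[r]_{i_0}$ on which $\psi$ acts by $z \mapsto tz$ with $t \neq 1$, and the torsion-freeness of $(\pi_r)_*\mathcal E$ forces the restriction $\mathcal E|_{R[r]_{i_0}} \cong \mathcal O^{\oplus a_{i_0}} \oplus \mathcal O(1)^{\oplus b_{i_0}}$ to carry a positive part, i.e. $b_{i_0} \geq 1$ (a short auxiliary claim I would isolate from the definition of a Gieseker bundle).

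Suppose now $f : \mathcal E \xrightarrow{\sim} \psi^*\mathcal E$. Iterating via $\psi^* f$ produces isomorphisms $\mathcal E \cong (\psi^k)^*\mathcal E$ for all $k$, and the key step is to promote $f$ to an honest linearization of $\mathcal E$ for the one-parameter (or finite cyclic) subgroup $\langle\psi\rangle \subset \mathbb{G}_m^r$, so that $\mathcal E$ acquires a weight decomposition $\mathcal E = \bigoplus_w \mathcal E_w$. Here I would use stability: the pushforward $F := (\pi_r)_*\mathcal E$ is a \emph{stable} torsion-free sheaf, hence simple, and since $\pi_r \circ \psi = \pi_r$ gives a canonical identification $(\pi_r)_*\psi^*\mathcal E \cong F$, the pushed-forward automorphism $(\pi_r)_* f$ is a scalar; rescaling $f$ by this scalar lets the cocycle close up and yields the equivariant structure. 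Because $\psi$ acts trivially on $X_0$, the decomposition descends under $\pi_r$ to $F = \bigoplus_w (\pi_r)_*\mathcal E_w$. Indecomposability of the simple sheaf $F$ forces all but one summand to vanish, so $\mathbb{G}_m$ acts on $\mathcal E$ through a single weight. But a $\mathbb{G}_m$-linearized $\mathcal O(1)$ on $R[r]_{i_0}$ has \emph{distinct} weights on its fibres over the two fixed points $p_{i_0}, p_{i_0+1}$ (they differ by $\deg\mathcal O(1) = 1$), so $\mathcal E|_{R[r]_{i_0}}$ cannot be isotypic once $b_{i_0} \geq 1$ — a contradiction, and hence $\mathcal E \ncong \psi^*\mathcal E$.

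For the Higgs statement (2), the object being rigidified is the pair, so I would prove the corresponding assertion $(\mathcal E,\phi)\not\cong(\psi^*\mathcal E,\psi^*\phi)$, which is the form the freeness of the action actually needs, by running the identical argument with the pair in place of the bundle. The relevant simple object is now the stable torsion-free Higgs pair $(\pi_r)_*(\mathcal E,\phi) = (F,\bar\phi)$, and the crucial compatibility is that $\omega_{X_r} \cong \pi_r^*\omega_{X_0}$ (remark \ref{Rem101}) carries the \emph{trivial} $\psi$-linearization; consequently the weight decomposition is compatible with $\phi : \mathcal E \to \mathcal E \otimes \omega_{X_r}$ and descends to a $\bar\phi$-invariant splitting of $(F,\bar\phi)$. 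Simplicity of the stable pair again leaves a single weight, and the same $\mathcal O(1)$-weight obstruction on $R[r]_{i_0}$ closes the argument.

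The main obstacle is the middle step — upgrading a single isomorphism $f$ to a genuine grading. The bundle $\mathcal E$ on $X_r$ is \emph{not} itself simple in general, since nilpotent endomorphisms supported along the chain can occur (the blocks $\mathrm{Hom}(\mathcal O^{\oplus a_i},\mathcal O(1)^{\oplus b_i}) = H^0(\mathcal O(1))^{\oplus a_i b_i}$ are nonzero), so one cannot diagonalize $f$ naively. The point to get right is that simplicity of the pushforward $F$ (respectively of the Higgs pair $(F,\bar\phi)$) is exactly what pins down $f$ modulo these harmless chain-endomorphisms and makes the $\langle\psi\rangle$-action well defined. I would also need to nail down the auxiliary claim that torsion-freeness of the pushforward forces $b_i \geq 1$ on every genuinely present component, since that is what guarantees the final weight discrepancy on the moved $\mathbb P^1$.
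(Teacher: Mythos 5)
Your overall skeleton --- use stability of the pushforward to rigidify the isomorphism $f:\mathcal E\to\psi^*\mathcal E$, then derive a contradiction from an $\mathcal O(1)$-summand on a component that $\psi$ actually moves --- matches the paper's, and your final fixed-point computation (an equivariant degree-$1$ line bundle on $\mathbb P^1$ has weights at the two fixed points differing by $1$, so it cannot carry a single-scalar action) is exactly the contradiction the paper extracts from the total space of $\mathcal O(-1)$. You are also right that what part (2) really needs, and what the argument delivers, is $(\mathcal E,\phi)\ncong(\psi^*\mathcal E,\psi^*\phi)$. But your middle step is a genuine gap. A single isomorphism $f:\mathcal E\xrightarrow{\sim}\psi^*\mathcal E$ generates only an action of $\mathbb Z$ (or of a finite cyclic group), and such an action carries no weight decomposition: the induced automorphisms $f_p$ of the fibres over the fixed points need not be semisimple, precisely because $\mathrm{Aut}(\mathcal E)$ contains the unipotent chain-endomorphisms you yourself flag. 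To get $\mathcal E=\bigoplus_w\mathcal E_w$ you must linearize the action of a diagonalizable group ($\mathbb G_m$ or $\mu_n$), which requires (i) showing the whole one-parameter subgroup stabilizes the isomorphism class of $\mathcal E$ and (ii) correcting the resulting descent datum so the cocycle condition holds on the nose --- and it is exactly the non-simplicity of $\mathcal E$ that obstructs (ii). Saying that simplicity of $F=(\pi_r)_*\mathcal E$ ``makes the $\langle\psi\rangle$-action well defined'' does not address this: well-definedness of the cyclic action is automatic and useless; diagonalizability is what is missing. Separately, your auxiliary claim is misattributed: torsion-freeness of $(\pi_r)_*\mathcal E$ alone does not force a positive summand on every component (for $r=2$, $\mathcal E|_{R_1}\cong\mathcal O(1)\oplus\mathcal O$ and $\mathcal E|_{R_2}\cong\mathcal O^{\oplus 2}$ still gives $H^0(\mathcal E|_R(-p_1-p_3))=0$); positivity on each component comes from strict standardness/admissibility of the stable Gieseker bundle, not from torsion-freeness.

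The paper closes this gap without any group action. From $(\pi_r)_*\tilde\psi=\lambda\cdot\mathrm{Id}$ on the stable (hence simple) pushforward, $\tilde\psi$ is the scalar $\lambda$ on all of $\tilde X_0$, in particular on the fibres at the attaching points $p_1,p_{r+1}$. For any $\sigma\in H^0(R,\mathcal E|_R)$ the section $\tilde\psi^{-1}\circ\sigma\circ\psi-\lambda^{-1}\sigma$ vanishes at $p_1$ and $p_{r+1}$, hence lies in $H^0\bigl(R,\mathcal E|_R(-p_1-p_{r+1})\bigr)=0$ (this is where torsion-freeness actually enters); so $\tilde\psi$ acts on $H^0(R,\mathcal E|_R)$ by a scalar, and since $\mathcal E|_R$ is globally generated, $\tilde\psi|_R$ is that scalar times the identity everywhere on $R$. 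This elementary propagation replaces your linearization step, after which your weight/fixed-point contradiction on the $\mathcal O(1)$-summand of $\mathcal E|_{R_{i_0}}$ finishes the proof; for (2) one runs the same argument with the stable torsion-free Higgs pair in place of $F$.
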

\begin{proof}
\textsf{proof of (1)} Suppose that there exists a Gieseker vector bundle $\mathcal E$ on $X_r$and $\psi \in {Aut}_{X_0}(X_r)$ such that $\psi^*\mathcal E\cong \mathcal E$. Then there exists an automorphism $\tilde{\psi}: \mathcal E\rightarrow \mathcal E$ such that the following diagram commutes
\begin{equation}
\begin{tikzcd}
\mathcal E\arrow{r}{\tilde{\psi}}\arrow{d} &\mathcal E\arrow{d}\\
X_r\arrow{r}{\psi} & X_r
\end{tikzcd}
\end{equation}
Notice that $X_r=\tilde{X_0}\cup R$, where $\tilde{X_0}\rightarrow X_0$ is the normalization, $R$ is the rational chain of length $r$ and $\tilde{X_0}\cap R=\{p_1,p_{r+1}\}$. Consider the push-forward $\pi_*\mathcal E\xrightarrow{\pi_*\tilde{\psi}} \pi_*\mathcal E$. Since $\pi_*\mathcal E$ is a stable torsion-free sheaf the morphism $\pi_*\tilde{\psi}=\lambda\cdot \text{Identity},$ where $\lambda$ is a non-zero scalar.

Restricting the above diagram on $R$ we get
\begin{equation}
\begin{tikzcd}
\mathcal E|_{R}\arrow{r}{\tilde{\psi}}\arrow{d} &\mathcal E|_{R}\arrow{d}\\
R\arrow{r}{\psi} & R
\end{tikzcd}
\end{equation}
The restriction of $\tilde{\psi}$ at the two points are $\lambda\cdot\text{Identity}$. Now notice that the vector bundle $\mathcal E|_{R}$ is globally generated. Given a global section $\sigma\in \Gamma(R, \mathcal E|_{R})$, we get a new section $\tilde{\psi}^{-1}\circ \sigma\circ \psi$. Notice that $(\tilde{\psi}^{-1}\circ \sigma\circ \psi-\lambda\cdot \sigma)(p_i)=0$ for $i=1$ and $i=r+1$. Since $\mathcal E|_{R}$ is strictly standard (definition \ref{SS440}), we conclude that $\tilde{\psi}^{-1}\circ \sigma\circ \psi=\lambda\cdot \sigma$. Therefore the induced morphism $H^0(R, \mathcal E|_{R})\rightarrow H^0(R, \mathcal E|_{R})$ is multiplication by $\lambda^{-1}$. Since $\mathcal E|_{R}$ is globally generated therefore the morphism $\tilde{\psi}=\lambda^{-1}\cdot \text{Identity}$.

This is not possible because $\mathcal E|_{R}$ is strictly standard. To see this first notice that we can decompose $\mathcal E|_{R}\cong L_1\oplus \cdots\oplus L_n$, such that $L_i|_{R_i}\cong \mathcal O(1)$ and $L_i|_{R_j}\cong \mathcal O$ for all $i\neq j$. For $i\neq j$, the induced morphism $L_i\xrightarrow{\tilde{\psi}} L_j$ cannot be multiplication by a scalar becuase there is no $\psi-$equivariant homomorphism from $\mathcal O(1)\rightarrow \mathcal O$. Therefore the induced morphism $L_i\rightarrow L_j$ is $0$ for any $i\neq j$. Therefore $\tilde{\psi}(L_i)\cong L_i$ for all $i=1,\cdots, n$.

Since $\psi$ is a nontrivial automorphism of the curve $X_r$, it is nontrivial on at least one rational curve in the chain $R$. Without loss of generality, let us assume that $\psi|_{R_i}\neq 1$. Moreover, let $\psi|_{R_i}$ is given by the multiplication by a scalar $\mu\notin \{1,-1\}$ i.e., ${\psi}([x:y])=[\mu\cdot x: \frac{1}{\mu}y]$. Notice that the morphism $L_i\xrightarrow{\tilde{\psi}} L_i$ given by the multiplication by a scalar $\lambda^{-1}$ cannot be $\psi$-equivariant for any scalar $\lambda^{-1}$. To see this let us focus on the rational curve $R_i$. We have $L_i|_{R_i}\cong \mathcal O(1)$ and therefore $L^*_i|_{R_i}\cong \mathcal O(-1)$. We have the following commutative square
\begin{equation}
\begin{tikzcd}
L^*\arrow{d}\arrow{r}{\tilde{\psi}^*} & L^*\arrow{d}\\
R_i\arrow{r}{\psi^{-1}} & R_i
\end{tikzcd}
\end{equation}
Notice that the map $\tilde{\psi}^*$ is multiplication by the scalar $\lambda$.

The total space of $\mathcal O(-1)$ is the following subvariety of $R_i\times \mathbb C^2$

\begin{equation}
\big\{([x:y], (\gamma\cdot x, \gamma \cdot y)|~~\gamma\in \mathbb C\big\}
\end{equation}

Consider the diagram
\begin{equation}
\begin{tikzcd}
([x:y], (\gamma\cdot x, \gamma \cdot y))\arrow{rr}&& ([ \frac{1}{\mu}\cdot x:\mu\cdot y],(\lambda\cdot \gamma\cdot x, \lambda \cdot \gamma \cdot y)), ~~\forall \gamma\in \mathbb C\\
\mathcal O(-1)\arrow{rr}\arrow{d} && \mathcal O(-1)\arrow{d}\\
R_i\arrow{rr} && R_i \\
{[x:y]}\arrow{rr} && {[ \frac{1}{\mu}\cdot x :\mu \cdot y]}
\end{tikzcd}
\end{equation}

But if $\mathcal O(-1)$ has to be equivariant then $\lambda\cdot \gamma=\frac{1}{\mu} \cdot \nu$ and $\lambda\cdot \gamma=\mu\cdot \nu$ for some $\nu\in \mathbb C$. But this implies that $\mu^2\cdot \nu=\nu$. Since $\lambda\neq 0$ the scalar $\nu \neq 0$. Therefore $\mu^2=1$ i.e, $\mu\in \{1,-1\}$. But the multiplications by $\pm 1$ induce the identity morphism on $R_i$, which is a contradiction.

\textsf{proof of (2)} The proof of the second statement follows similarly using the fact that induced torsion-free Higgs pair $(\pi_*\mathcal E, \pi_*\phi)$ is stable and therefore the automorphism $\pi_*\tilde \psi =\lambda\cdot \text{Identity},$ where $\lambda$ is a non-zero scalar.
\end{proof}

The vector space $H^0(X_r, T_{X_r})$ parametrises the automorphisms of the variety $X_r\times \spec k[\epsilon]$, which commute with the projection to $\spec k[\epsilon]$ and whose restriction on the closed fiber is the identity morphism. Let us denote $\spec k[\epsilon]$ by $\mathbb D$. Now notice given an infinitesimal automorphism $X_r\times \mathbb D\xrightarrow{\psi} X_r\times \mathbb D$ and a vector bundle $\mathcal E_{\mathbb D}$ over $X_r\times \mathbb D$ such that the restriction to the closed fiber is a Gieseker vector bundle $\mathcal E$, we can pull back the vector bundle $\mathcal E_{\mathbb D}$ by the morphism $\psi$. We define the following action
\begin{equation}
H^0(X_r, T_{X_r})\times H^1(X_r, \mathcal End \mathcal E)\rightarrow H^1(X_r, \mathcal End \mathcal E)
\end{equation}
given by $(\psi, \mathcal E_{\mathbb D})\mapsto \psi^*\mathcal E_{\mathbb D}$.

Similarly, given an infinitesimal automorphism $X_r\times \spec k[\epsilon]\xrightarrow{\psi} X_r\times \spec k[\epsilon]$ and a Higgs bundle $(\mathcal E_{\mathbb D},\phi_{\mathbb D})$ over $X_r\times \spec k[\epsilon]$ such that the restriction to the closed fiber is the Higgs bundle $(\mathcal E, \phi)$, we can pullback the Higgs field $\phi_{\mathbb D}$ by the morphism $\psi$. We define the following action
\begin{equation}
H^0(X_r, T_{X_r})\times \mathbb H^1(X_r, \mathcal C_{\bullet})\rightarrow \mathbb H^1(X_r, \mathcal C_{\bullet})
\end{equation}
given by $(\psi, \mathcal E_{\mathbb D}, \phi_{\mathbb D})\mapsto (\psi^*\mathcal E_{\mathbb D}, \psi^*\phi_{\mathbb D})$.

\begin{lema}\label{InfAct}\label{Tesimal}
\begin{enumerate}
\item Let $\mathcal E$ be a stable Gieseker vector bundle on $X_r$. The action of the group $H^0(X_r, T_{X_r})$ of infinitesimal automorphisms of $X_r$ on the space $H^1(X_r, \mathcal End \mathcal E)$ of all first order infinitesimal deformations of the vector bundle $\mathcal E$ is free.

\item Let $(\mathcal E, \phi)$ be a stable Gieseker-Higgs bundle on $X_r$. The action of the group $H^0(X_r, T_{X_r})$ of infinitesimal automorphisms of $X_r$ on the space $\mathbb H^1(X_r, \mathcal C_{\bullet})$ of all first order infinitesimal deformations of the Higgs bundle $(\mathcal E, \phi)$ is free.
\end{enumerate}
\end{lema}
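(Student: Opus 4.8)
My plan is to reduce freeness to the injectivity of a single linear map, and then to extract that injectivity from Lemma \ref{Equ1011}.

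First I would make the action explicit in \v{C}ech terms. Fix transition matrices $A_{ij}$ for $\mathcal E$ on a trivializing cover $\{U_i\}$ of $X_r$; a first-order deformation $\mathcal E_{\mathbb D}$ is encoded by a cocycle $\{s_{ij}\}$ with class $\theta=[\{s_{ij}\}]\in H^1(X_r,\mathcal End \mathcal E)$, its transition data being $A_{ij}(1+\epsilon s_{ij})$. An infinitesimal automorphism $\psi=\mathrm{id}+\epsilon v$, with $v\in H^0(X_r,T_{X_r})$, acts by pulling back the $A_{ij}$, which to first order adds the term $\epsilon\, v(A_{ij})$. Since each $s_{ij}$ already carries a factor of $\epsilon$, the cross term is $O(\epsilon^2)$ and drops out, so $\psi\cdot\theta=\theta+\rho(v)$, where $\rho(v)$ is the class represented by the cocycle $\{A_{ij}^{-1}v(A_{ij})\}\in H^1(X_r,\mathcal End \mathcal E)$ and is \emph{independent} of $\theta$. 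Hence the additive group $H^0(X_r,T_{X_r})$ acts by translations, the stabilizer of every point is $\ker\rho$, and freeness is equivalent to injectivity of $\rho$.

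Next I would identify $\rho$ geometrically: $\rho(v)$ is precisely the Kodaira--Spencer class of the deformation of $\mathcal E$ obtained by moving the base along $v$, so $\rho$ is the differential at the identity of the orbit map $o:\mathrm{Aut}(X_r/X_0)\to \mathcal M_{GVB}$, $\psi\mapsto[\psi^*\mathcal E]$, under the identification $H^0(X_r,T_{X_r})=\operatorname{Lie}(\mathrm{Aut}(X_r/X_0))$ (recall $\mathrm{Aut}(X_r/X_0)\cong\mathbb G_m^r$ rescales the $r$ components of the chain $R[r]$, while $\tilde X_0$ of genus $\geq 2$ carries no vector fields). Standard deformation theory then gives $\ker\rho=\ker(do_e)=\operatorname{Lie}(\mathrm{Stab}\,\mathcal E)$. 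Now I invoke Lemma \ref{Equ1011}(1): for every nontrivial $\psi$ we have $\psi^*\mathcal E\ncong\mathcal E$, so the set-theoretic stabilizer of $\mathcal E$ is trivial; as $k=\mathbb C$ has characteristic zero the stabilizer group scheme is reduced (Cartier), hence trivial, so $\operatorname{Lie}(\mathrm{Stab}\,\mathcal E)=0$, i.e. $\ker\rho=0$. Therefore $\rho$ is injective and the action is free. Part (2) is word-for-word the same, replacing $\mathcal End \mathcal E$ and $H^1$ by $\mathcal C_\bullet$ and $\mathbb H^1(\mathcal C_\bullet)$, the orbit map by $\psi\mapsto[(\psi^*\mathcal E,\psi^*\phi)]$, and citing Lemma \ref{Equ1011}(2).

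The main obstacle is this last passage, from the group-element statement of Lemma \ref{Equ1011} to the infinitesimal statement we need: a priori a nonzero $v$ might fix $\mathcal E$ only to first order without integrating to a fixing one-parameter subgroup. The characteristic-zero reducedness of stabilizers (equivalently, the fact that a free $\mathbb G_m^r$-action has orbits of full dimension $r$ with injective orbit-map differential) is exactly what closes this gap. An alternative, more hands-on route that avoids this input is to repeat the $\mathcal O(1)$-versus-$\mathcal O$ weight computation in the proof of Lemma \ref{Equ1011} directly at first order on the component $R[r]_i$ where $v$ is nonzero, where the obstruction relation $\mu^2=1$ degenerates to $2\epsilon w=0$ and forces $v|_{R[r]_i}=0$.
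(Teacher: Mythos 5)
Your reduction is sound and in fact reappears verbatim in the paper later on: the translation formula $\psi\cdot\theta=\theta+\rho(v)$ with $\rho(v)$ represented by the cocycle $(\{[X_\psi,\phi_i]\},\{[X_\psi,A_{ij}]\})$ is exactly the content of Lemma \ref{rank}(1), and the paper itself remarks there that this gives an alternative proof of Lemma \ref{Tesimal}. Where you diverge is in how injectivity of $\rho$ is obtained. The paper argues directly at first order: given $\tilde\psi:\mathcal E_{\mathbb D}\xrightarrow{\sim}\psi^*\mathcal E_{\mathbb D}$, stability of $\pi_*\mathcal E$ forces the induced automorphism to be $1+\epsilon\lambda$, strict standardness propagates $\Psi=\lambda\cdot I$ across the chain $R[r]$, and an explicit computation in a $\psi$-equivariant trivialization yields $X_\psi(df)=\lambda$ for all local $f$, hence $\lambda=0$ and $X_\psi=0$. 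You instead deduce the infinitesimal statement from the group-level Lemma \ref{Equ1011} via smoothness of stabilizers in characteristic zero. That is a legitimate and arguably cleaner route, and you correctly identify (and close) the one real gap in it, namely that a vector field could a priori fix $\mathcal E$ to first order without any group element doing so. What your approach buys is economy; what the paper's buys is self-containedness at this point of the text, since the direct computation needs no ambient scheme carrying the $A_r$-action.

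Two points need repair. First, the target of your orbit map cannot be $\mathcal M_{GVB}$: its points are Gieseker-equivalence classes (Definition \ref{GisEq}), which by construction identify $\mathcal E$ with $\psi^*\mathcal E$ for $\psi\in\mathrm{Aut}(X_r/X_0)$, so the orbit map into $\mathcal M_{GVB}$ is constant and its differential is zero. The correct target is the moduli of $\epsilon$-stable bundles (resp.\ Higgs bundles) on the \emph{fixed} curve $X_r$ up to bundle isomorphism, i.e.\ $\mathcal M^{\chi,n,\epsilon,ad}_{VB,X_r}$ resp.\ $\mathcal M^{\chi,n,\epsilon,ad}_{HB,X_r}$, which the paper only constructs in \S 6. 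Second, and relatedly, to apply Cartier you need the scheme-theoretic stabilizer $o^{-1}([\mathcal E])\subset A_r$ together with the identification $T_e\,o^{-1}([\mathcal E])=\ker\rho$; this again presupposes that fine moduli space (or an $\mathrm{Isom}$-scheme argument exploiting simplicity of $\mathcal E$). Since those constructions do not depend on the present lemma, there is no circularity, but as written your proof relies on material the paper introduces only two sections later; your ``hands-on'' fallback at the end, which is essentially the paper's actual argument, avoids this dependence.
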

\begin{proof}
\textsf{proof of (1)} Notice that if there exists $\psi\in H^0(X_r, T_{X_r})$ and $\mathcal E_{\mathbb D}\in H^1(X_r, \mathcal End \mathcal E)$ such that there exists an isomorphism $\tilde{\psi}: \mathcal E_{\mathbb D}\rightarrow \psi^*\mathcal E_{\mathbb D}$, then we have a following cartesian square
\begin{equation}
\begin{tikzcd}
\mathcal E_{\mathbb D}\arrow{rr}{\tilde{\psi}}\arrow{d}&& \mathcal E_{\mathbb D}\arrow{d}\\
X_r\times \spec k[\epsilon] \arrow{rr}{\psi} && X_r\times \spec k[\epsilon]
\end{tikzcd}
\end{equation}
In other words, $\mathcal E_{\mathbb D}$ is a $\psi$-equivariant bundle on $X_r\times \spec k[\epsilon]$. Let us denote by $\pi$ the morphism $X_r\rightarrow X_0$. Then we have an induced automorphism $X_0\times \spec k[\epsilon]\xrightarrow{(\pi\times \mathbb 1)_*\psi} X_0\times \spec k[\epsilon]$ such that it commutes with the projection to $\spec k[\epsilon]$ and the induced automorphism on the special fiber is the identity. But since $X_0$ is a stable curve therefore $(\pi\times \mathbb 1)_*\tilde \psi=\text{Identity}$. We also have an induced automorphism of the torsion-free sheaf $(\pi\times \mathbb 1)_*\tilde{\psi}: (\pi\times \mathbb 1)_*\mathcal E_{\mathbb D}\xrightarrow{} (\pi\times \mathbb 1)_*\mathcal E_{\mathbb D}$ such that induced morphism on the closed fiber $\pi_*\mathcal E\rightarrow \pi_*\mathcal E$ is the identity. Since the morphism $(\pi\times \mathbb 1)_*\tilde{\psi}$ is an $\mathcal O_{X_0}[\epsilon]$ module homomorphism and it is the Identity morphism modulo $\epsilon$, therefore the morphism is multiplication(on the left) by $I+\epsilon \Psi_0$, where $\Psi_0: \pi_*\mathcal E\rightarrow \pi_*\mathcal E$ an $\mathcal O_{X_0}$ module homomorphism. Therefore if $\sigma_1+\epsilon\sigma_2$ is a local section of $(\pi\times \mathbb 1)_*\mathcal E_{\mathbb D}$ then $(\pi\times \mathbb 1)_*\tilde{\psi}(\sigma_1+\epsilon\sigma_2)=\sigma_1+\epsilon\sigma_2+\epsilon \Psi_0(\sigma_1)$. Since the torsion free sheaf $\pi_*\mathcal E$ is stable therefore the morphism $\Psi_0$ must be multiplication by some scalar $\lambda$. Therefore on $\tilde{X_0}\times \spec k[\epsilon]$ also the restriction of $\tilde{\psi}$ is given by
\begin{equation}
\tilde{\psi}(\sigma_1+\epsilon\sigma_2)=\sigma_1+\epsilon\cdot\sigma_2+\epsilon \lambda \sigma_1
\end{equation}
By restricting the morphism $\tilde{\psi}$ over $R[\epsilon]:=R\times \spec k[\epsilon]$ we get
\begin{equation}
(\mathcal E_{\mathbb D})|_{R[\epsilon]}\xrightarrow{\tilde{\psi}} (\mathcal E_{\mathbb D})|_{R[\epsilon]}
\end{equation}
But $\tilde{\psi}|_{p_i\times \spec k[\epsilon]}(\sigma_1+\epsilon \sigma_2)=\sigma_1+\epsilon \sigma_2+\epsilon \lambda \sigma_1$ for $i=1, r+1$. Since the morphism $\tilde{\psi}$ is $\psi$-equivariant $\mathcal O_{R}[\epsilon]$-module homomorphism and is the Identity morphism modulo $\epsilon$, therefore $\tilde{\psi}$ is multiplication(on the left) by $I+\epsilon \Psi$, where $\Psi: \mathcal E|_{R}\rightarrow \mathcal E|_R$ is $\mathcal O_R$-module. Therefore $\tilde{\psi}(\sigma_1+\epsilon \sigma_2)=\sigma_1+\epsilon \sigma_2+\epsilon \Psi(\sigma_1)$. Now notice that at the two extremal points $p_1$ and $p_{r+1}$, the morphism $\Psi$ is multiplication by the scalar $\lambda$. Since the vector bundle $\mathcal E|_R$ is a strictly standard vector bundle, therefore $\Psi=\lambda\cdot I$.

But this is not possible unless the infinitesimal automorphism $\psi$ is trivial. To see this, notice that $\psi$ is given by
\begin{equation}
\mathcal O_R[\epsilon]\rightarrow \mathcal O_R[\epsilon]
\end{equation}
which maps $f+\epsilon \cdot g\mapsto f+\epsilon (g+X_{\psi}(df))$, where $X_{\psi}$ is the vector field on $R$ corresponding to the infinitesimal automorphism $\psi$. Over a $\psi$-equivariant trivialization $U[\epsilon]$ of $\mathcal E_{\mathbb D}$, where $U$ is an open subset of $R$, we have
\begin{equation}
(\mathcal O_{U})^{\oplus 2}[\epsilon]\xrightarrow{\tilde{\psi}} (\mathcal O_{U})^{\oplus 2}[\epsilon]
\end{equation}
such that $\tilde{\psi}(f_1+\epsilon g_1, f_2+\epsilon g_2)=\tilde{\psi}((f_1,f_2)+\epsilon(g_1,g_2))=\tilde{\psi}(f_1\cdot (1,0))+\tilde{\psi}(f_2\cdot(1,0))+\tilde{\psi}(\epsilon g_1\cdot(0,1))+\tilde{\psi}(\epsilon g_2\cdot(0, 1))=\psi(f_1)\cdot(1,0)+\psi(f_2)\cdot(1,0)+\psi(\epsilon g_1)(0,1)+\psi(\epsilon g_2)(0,1)=(f_1+\epsilon X_{\psi}(f_1))(1,0)+(f_2+\epsilon X_{\psi}(f_2))(1,0)+(\epsilon g_1)(0,1)+(\epsilon g_2)(0,1)=(f_1+\epsilon(g_1+X_{\psi}(df_1)), f_2+\epsilon(g_2+X_{\psi}(df_2)))$.
But then we must have $X_{\psi}(df_1)=\lambda$ and $X_{\psi}(df_2)=\lambda$ for all local functions $f_1$ and $f_2$, which is only possible when $\lambda=0$ and the vector field $X_{\psi}$ is trivial, i.e., the infinitesimal automorphism $\psi$ is trivial.

\textsf{proof of (2)} The proof of the second statement follows similarly using the fact that the induced infinitesimal torsion free Higgs pair $(\pi_*\mathcal E_{\mathbb D}, \pi_{\mathbb D}*\phi)$ is stable and therefore the $\Psi_0=\lambda\cdot \text{Identity}$, where $\lambda$ is a scalar.
\end{proof}
\begin{rema}\label{Free}
From the above lemma it follows that if $0\neq \psi\in H^0(X_r, T_{X_r})$ and $\mathcal E_{\mathbb D}$ is the trivial infinitesimal deformation of $\mathcal E$ over $X_r\times \spec k[\epsilon]$, then $\psi^* \mathcal E_{\mathbb D}\ncong \mathcal E_{\mathbb D}$. Therefore we conclude that $H^0(X_r, T_{X_r})$ is a subspace of $H^1(X_r, \mathcal End \mathcal E)$. Similarly, we can show that $H^0(X_r, T_{X_r})$ is a subspace of $\mathbb H^1(\mathcal C_{\bullet})$.
\end{rema}
\begin{prop}\label{RelTan}
\begin{enumerate}
\item The relative tangent space of $f_{Cur}: \mathcal M_{GVB}\rightarrow \mathcal Log_{(\spec k, \overline{\mathbb N})}$ at a point $(\pi_r: X_r\rightarrow X_0, \mathcal E)$ is isomorphic to $H^1(X_r, \mathcal End \mathcal E)$.
\item The relative tangent space of $f_{Cur}: \mathcal M_{GHB}\rightarrow \mathcal Log_{(\spec k, \overline{\mathbb N})}$ at a point $(\pi_r: X_r\rightarrow X_0, \mathcal E,\phi: \mathcal E\rightarrow \mathcal E\otimes \pi^*_r\omega_{X_0})$ is isomorphic to $\mathbb H^1(\mathcal C_{\bullet})$, where $\mathcal C_{\bullet}$ is the complex
\begin{equation}\label{complex1}
0\rightarrow {\mathcal E}nd~~\mathcal E\xrightarrow{-[\phi,\bullet]} {\mathcal E}nd~~\mathcal E\otimes \pi^*_r\omega_{X_0}\rightarrow 0,
\end{equation}
and the map $[\phi,\bullet](s)=\phi\circ s-(\mathbb 1\otimes s)\circ \phi$.
\end{enumerate}
\end{prop}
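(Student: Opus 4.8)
The plan is to read off the relative tangent space of $f_{Cur}$ as a relative deformation functor over the Olsson stack $\mathcal Log_{(\spec k, \overline{\mathbb N})}$ and to show it is governed entirely by the deformations of the bundle (resp. Higgs pair) on the fixed curve. Since $f_{Cur}$ is smooth (remark following Proposition \ref{logs}), its relative tangent space at a point $(\pi_r: X_r \to X_0, \mathcal E)$ is, in Olsson's description \cite{27}, the set of isomorphism classes of first-order deformations $(\mathcal X_{\mathbb D}, \mathcal E_{\mathbb D})$ over $\mathbb D := \spec k[\epsilon]$ equipped with a trivialization $\tau$ of the induced log structure on $\mathbb D$, that is, an isomorphism of $f_{Cur}(\mathcal X_{\mathbb D}, \mathcal E_{\mathbb D})$ with the pullback of the base log structure; two such pairs are identified when an isomorphism of deformations carries one trivialization to the other.

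First I would pin down the curve direction. By Lemma \ref{Log101}, the existence of the trivialization $\tau$ forces the underlying family of curves to be trivial, $\mathcal X_{\mathbb D} \cong X_r \times \mathbb D$. Hence the deformation reduces to a deformation of the vector bundle $\mathcal E$ on the fixed curve $X_r$, which is classified by $H^1(X_r, \mathcal End \mathcal E)$ by the standard deformation theory of bundles; in the Higgs case it is classified by the hypercohomology $\mathbb H^1(\mathcal C_{\bullet})$ of the complex \eqref{complex1}, the computation of Proposition \ref{prop6565} being valid verbatim on the singular curve $X_r$ (remark \ref{Liou441}).

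Next I would account for the trivializations and the automorphisms, which is the heart of the matter. The set of trivializations $\tau$ is a torsor under the group $Aut^{Inf}_Y(M)$ of infinitesimal log-automorphisms of the base, a $k$-vector space of dimension $r$ by Lemma \ref{Auto101}. The isomorphisms of the deformation $(X_r \times \mathbb D, \mathcal E_{\mathbb D})$ that restrict to the identity on the closed fibre and commute with the projection to $X_0$ are exactly the infinitesimal curve automorphisms $H^0(X_r, T_{X_r}) \cong k^r$; such a $\psi$ sends $\mathcal E_{\mathbb D} \mapsto \psi^* \mathcal E_{\mathbb D}$ and simultaneously acts on the trivialization through the natural homomorphism $\rho : H^0(X_r, T_{X_r}) \to Aut^{Inf}_Y(M)$ sending an infinitesimal automorphism of the chain to the log-automorphism it induces on $\spec k[\epsilon]$. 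The key point to verify is that $\rho$ is an isomorphism, so that $H^0(X_r, T_{X_r})$ acts simply transitively on the torsor of trivializations. Granting this, each isomorphism class contains a unique representative with a fixed trivialization $\tau_0$, and the assignment $(\mathcal E_{\mathbb D}, \tau) \mapsto \psi^* \mathcal E_{\mathbb D}$, where $\psi$ is the unique element normalizing $\tau$ to $\tau_0$, furnishes a linear isomorphism of the relative tangent space with $H^1(X_r, \mathcal End \mathcal E)$. Here the freeness of the $H^0(X_r, T_{X_r})$-action (Lemma \ref{InfAct}) and the inclusion $H^0(X_r, T_{X_r}) \hookrightarrow H^1(X_r, \mathcal End \mathcal E)$ of remark \ref{Free} ensure that the action on the product of $H^1(X_r, \mathcal End \mathcal E)$ with the torsor of trivializations is free, so the quotient is a $k$-vector space of the expected dimension and the isomorphism is well defined.

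For part (2) the argument is identical, replacing $H^1(X_r, \mathcal End \mathcal E)$ by $\mathbb H^1(\mathcal C_{\bullet})$ throughout and invoking the Higgs versions of Lemma \ref{Equ1011}(2), Lemma \ref{InfAct}(2) and remark \ref{Free}. I expect the main obstacle to be the identification $\rho : H^0(X_r, T_{X_r}) \xrightarrow{\sim} Aut^{Inf}_Y(M)$ together with the verification that the two actions of this group, on the bundle (resp. Higgs) deformations by pullback and on the torsor of trivializations, are compatible and simply transitive; it is this compatibility, rather than any cohomological input, that forces the relative tangent space to be exactly $H^1(X_r, \mathcal End \mathcal E)$ (resp. $\mathbb H^1(\mathcal C_{\bullet})$) and not a proper subspace or a nontrivial extension.
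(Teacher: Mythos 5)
Your proposal follows the same overall strategy as the paper: identify the relative tangent space of $f_{Cur}$, via Olsson's stack, with isomorphism classes of first-order deformations equipped with a trivialization of the induced log structure on $\spec k[\epsilon]$; kill the curve direction with Lemma \ref{Log101}; and control the remaining ambiguity using Lemma \ref{Auto101} (the $r$-dimensional group ${Aut}^{Inf}_Y(M)$) together with Lemma \ref{InfAct} and remark \ref{Free} (the free action of $H^0(X_r,T_{X_r})\cong k^r$ by pullback). Where you part ways is the endgame. The paper never identifies your map $\rho: H^0(X_r,T_{X_r})\rightarrow {Aut}^{Inf}_Y(M)$: it exhibits a surjection from $H^1(X_r,\mathcal End\,\mathcal E)$ onto the relative tangent space $V$, a surjection from $V$ onto $H^1(X_r,\mathcal End\,\mathcal E)/H^0(X_r,T_{X_r})$ with fibre ${Aut}^{Inf}_Y(M)\cong k^r$, and concludes by a dimension count, so that the first surjection between equidimensional vector spaces is an isomorphism. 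You instead propose to prove that $\rho$ is an isomorphism and that the two actions of $H^0(X_r,T_{X_r})$ (on bundle deformations by pullback, on the torsor of trivializations through $\rho$) are compatible, so that each class has a unique representative carrying the canonical trivialization. Your route is more explicit but demands one computation that the paper's count sidesteps, and you leave it open ("granting this"). It does go through: a generator $c\,z\partial_z$ of $H^0(R[r]_j, T_{R[r]_j}(-x^+_j-x^-_j))$ rescales the two branch coordinates at the nodes $p_j$ and $p_{j+1}$ by $1+c\epsilon$ and $1-c\epsilon$ respectively, hence $\rho$ sends it to the tuple $(0,\dots,c,-c,\dots,0)$, and these $r$ tuples span the hyperplane $\lambda_1+\cdots+\lambda_{r+1}=0$ in $k^{r+1}$, which is exactly ${Aut}^{Inf}_Y(M)$ as computed in Lemma \ref{Auto101}. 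With that supplied, your normalization argument is complete and in some ways more transparent than the dimension count, since it shows directly why the answer is exactly $H^1(X_r,\mathcal End\,\mathcal E)$ (resp. $\mathbb H^1(\mathcal C_{\bullet})$) rather than a quotient or a nontrivial extension; but as written the proposal stops one computation short of a proof.
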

\begin{proof}
Since $\mathcal M_{GVB}\rightarrow (\spec k, \overline{\mathbb N})$ is a base-change of the log smooth morphism $\mathcal M_{GVB,S}\rightarrow S$, it is also a log-smooth. Therefore the morphism $f_{Cur}: \mathcal M_{GVB}\rightarrow \mathcal Log_{(\spec k, \overline{\mathbb N})}$ is smooth. By definition \cite[Definition 17.14.2]{21}, the relative tangent space is the fiber product (for the notations see \ref{ConNot})

\begin{equation}
\begin{tikzcd}
\mathcal M_{GVB}(k[\epsilon])\arrow{d} & \mathcal M_{GVB}(k[\epsilon])\times_{T\mathcal Log_{(\spec k, \overline{\mathbb N})}}( k) \mathcal Log_{(\spec k, \overline{\mathbb N})}( k)\arrow{d}\arrow{l}\\
T\mathcal Log_{(\spec k, \overline{\mathbb N})}( k) & \mathcal Log_{(\spec k, \overline{\mathbb N})}( k)\arrow{l}
\end{tikzcd}
\end{equation}

The vertical morphism on the left is the natural projection, and the horizontal morphism below is the null-section \cite[17.11.4]{21}. The fibre product in the diagram is a vector space.

It is well-known that the isomorphism classes of first-order infinitesimal deformations of a vector bundle $\mathcal E$ over a projective curve $C$ are parametrised by the vector space $H^1(C, \mathcal End \mathcal E)$. Therefore we have the following.

\begin{equation}
\left\{
\begin{array}{@{}ll@{}}
\text{Isomorphism classes of stable Gieseker vector bundles}\\
\mathcal E ~~\text{over}~~ X_r\times \spec k[\epsilon]~~\text{such that the}\\
\text{restriction over}~~ X_r ~~\text{is}~~ \mathcal E
\end{array}\right\}\cong H^1(X_r, \mathcal End \mathcal E)
\end{equation}

and from lemma \ref{Log101} it follows that we have a surjective morphism of vector spaces
\begin{equation}\label{Final}
H^1(X_r, \mathcal End \mathcal E) \rightarrow \mathcal M_{GVB}(k[\epsilon])\times_{T\mathcal Log_{(\spec k, \overline{\mathbb N})}( k)} \mathcal Log_{(\spec k, \overline{\mathbb N})}( k).
\end{equation}

From remark \ref{Log102} and lemma \ref{Log103}, it follows that the elements which lies in the image of the morphism $\mathcal M_{GVB}(k[\epsilon])\times_{T\mathcal Log_{(\spec k, \overline{\mathbb N})}( k)} \mathcal Log_{(\spec k, \overline{\mathbb N})}( k)\rightarrow \mathcal M_{GVB}( k[\epsilon])$ are the Gieseker-equivalent classes (definition \ref{GisEq}) of families of stable Gieseker vector bundles $(\mathcal X^{mod}, \mathcal E)$ over $\spec k[\epsilon]$ such that the induced logarithmic structure is on $\spec k[\epsilon]$ is isomorphic to the pull back of the log structure of $\spec k$, defined above in \eqref{Log1001}, under the natural projection map $\spec k[\epsilon]\rightarrow \spec k$. Therefore from lemma \ref{Log101}, it follows that the image of the morphism $\mathcal M_{GVB}(k[\epsilon])\times_{T\mathcal Log_{(\spec k, \overline{\mathbb N})}(k)} \mathcal Log_{(\spec k, \overline{\mathbb N})}( k)\rightarrow \mathcal M_{GVB}( k[\epsilon])$ is isomorphic to the quotient vector space
\begin{equation}
\frac{H^1(X_r, \mathcal End \mathcal E)}{H^0(X_r, T_{X_r})}.
\end{equation}

In lemma \ref{InfAct}, we have shown that the action of $H^0(X_r, T_{X_r})$ on $H^1(X_r, \mathcal End \mathcal E)$ is free and in \ref{Free}, we have remarked that $H^0(X_r, T_{X_r})$ is a vector subspace of $H^1(X_r, \mathcal End \mathcal E)$. Also, notice that the Gieseker equivalence on the infinitesimal families of Gieseker vector bundles is precisely the equivalence induced by the action of $H^0(X_r, T_{X_r})$ on $H^1(X_r, \mathcal End \mathcal E)$.

It follows from the definition of the fibre product of algebraic stacks that the fibre of the surjective morphism of vector spaces.
\[
\mathcal M_{GVB}(k[\epsilon])\times_{T\mathcal Log_{(\spec k, \overline{\mathbb N})}( k)} \mathcal Log_{(\spec k, \overline{\mathbb N})}(k)\rightarrow \frac{H^1(X_r, \mathcal End \mathcal E)}{H^0(X_r, T_{X_r})}
\]
is isomorphic to the vector space ${Aut}^{Inf}_{Y}(M)$ which is isomorphic to $ k^r$ (by lemma \ref{Auto101}). Also notice that $H^0(X_r, T_{X_r})\cong k^r$. Since the morphism \ref{Final} is a surjective morphism between two vector spaces of the same dimension, it has to be an isomorphism. Therefore the relative log tangent space is isomorphic to $ H^1(X_r, \mathcal End \mathcal E)$.

Similarly, if $(X_r, \mathcal E, \phi:\mathcal E\rightarrow \mathcal E\otimes \pi^*_r\omega_{X_0})$ is a Gieseker-Higgs bundle then the relative tangent space of $\mathcal M_{GHB}\rightarrow \mathcal Log_{(\spec k, \overline{\mathbb N})}$ at the point $(X_r, \mathcal E, \phi:\mathcal E\rightarrow \mathcal E\otimes \pi^*_r\omega_{X_0})$ is isomorphic to
\begin{equation}
\left\{
\begin{array}{@{}ll@{}}
\text{Isomorphism classes of Higgs bundles}\\
(\mathcal E, \tilde{\phi}) ~~\text{over}~~ X_r\times \spec k[\epsilon]~~\text{such that the}\\
\text{restriction over}~~ X_r ~~\text{is}~~ (\mathcal E,\phi)
\end{array}\right\}\cong \mathbb H^1(\mathcal C_{\bullet}), (\text{subsection}~~ \ref{SymCur} ~~\text{and}~~ \text{remark}~~ \ref{Liou441})
\end{equation}
where $\mathcal C_{\bullet}$ is the complex \eqref{complex1}.\end{proof}

\begin{thm}\label{Tann}
Let $(X_r,\mathcal E,\phi)$ be any Gieseker-Higgs bundle. Then
\begin{equation}\label{Trel1}
T_{\mathcal M_{GVB,S}/S}(-{log} \mathcal M_{GVB})|_{(X_r, \mathcal E)}\cong H^1(X_r, \mathcal End \mathcal E), T_{\mathcal M_{GHB,S}/S}(-{log} \mathcal M_{GHB})|_{(X_r, \mathcal E,\phi)}\cong \mathbb H^1(\mathcal C_{\bullet}).
\end{equation}
\end{thm}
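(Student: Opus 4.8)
The plan is to reduce the statement to Proposition \ref{RelTan} by identifying the relative log-tangent bundle—defined here via the normal-crossing divisor $\mathcal M_{GVB}\subset \mathcal M_{GVB,S}$—with the relative tangent bundle of the morphism to Olsson's stack $\mathcal Log_S$, whose fibres were already computed cohomologically. Recall that by Proposition \ref{logs} the divisorial log structure on $\mathcal M_{GVB,S}$ agrees with the one coming from the universal curve $\mathcal X^{univ}_S$, and by the remark following it the induced morphism $\mathcal M_{GVB,S}\rightarrow \mathcal Log_S$ is smooth. The first step is therefore to unwind the definition of $T_{\mathcal M_{GVB,S}/S}(-\log\mathcal M_{GVB})$ as the dual of $\Omega_{\mathcal M_{GVB,S}/S}(\log\mathcal M_{GVB})$ and to match the latter—given in this paper by the meromorphic-forms description—with the Kato sheaf of log-differentials of $\mathcal M_{GVB,S}$ over $S$; for a normal-crossing divisor this comparison is a standard local computation.

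The key step is Olsson's theorem that, for a fine log scheme smooth over $\mathcal Log_S$, the ordinary relative differentials over $\mathcal Log_S$ are canonically the log-relative differentials over $S$. Applying this to $\mathcal M_{GVB,S}\rightarrow \mathcal Log_S$ and dualizing identifies the relative tangent bundle of $\mathcal M_{GVB,S}\rightarrow \mathcal Log_S$ with $T_{\mathcal M_{GVB,S}/S}(-\log\mathcal M_{GVB})$. Restricting to the closed fibre, where $\spec k$ carries the log structure $\overline{\mathbb N}$, the morphism $f_{Cur}:\mathcal M_{GVB}\rightarrow \mathcal Log_{(\spec k,\overline{\mathbb N})}$ is the base change of $\mathcal M_{GVB,S}\rightarrow \mathcal Log_S$, so its relative tangent space at $(X_r,\mathcal E)$ is exactly the fibre $T_{\mathcal M_{GVB,S}/S}(-\log\mathcal M_{GVB})|_{(X_r,\mathcal E)}$. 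Proposition \ref{RelTan}(1) then yields the isomorphism with $H^1(X_r,\mathcal End\,\mathcal E)$, and Proposition \ref{RelTan}(2) gives the Higgs analogue with $\mathbb H^1(\mathcal C_{\bullet})$, completing both identifications.

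I expect the main obstacle to be the bookkeeping linking three a priori different descriptions of the same bundle: the meromorphic-form definition of $\Omega_{\mathcal M_{GVB,S}/S}(\log\mathcal M_{GVB})$ used in this paper, the Kato log-differentials attached to the log structure, and the relative differentials over $\mathcal Log_S$ that actually feed into Proposition \ref{RelTan}. Each identification is individually standard, but one must verify that the log structure used to compute the relative tangent space in Proposition \ref{RelTan} (pulled back from the curve) is the same as the divisorial one entering the definition of the relative log-tangent bundle; this is precisely where Proposition \ref{logs} is indispensable. Once these descriptions are aligned, no further computation is required, since the cohomological identifications have already been established.
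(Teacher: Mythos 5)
Your proposal is correct and follows essentially the same route as the paper: the paper's proof likewise invokes Proposition \ref{logs} to identify the divisorial and curve-induced log structures, Olsson's comparison \cite[3.8]{28} to identify the relative log-tangent bundle with the relative tangent bundle of the morphism to $\mathcal Log_S$, and then Proposition \ref{RelTan} for the fibrewise computation. The extra bookkeeping you flag (meromorphic forms versus Kato differentials) is implicit in the paper but your argument is the intended one.
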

\begin{proof}
Since the two log structures $f_{Div}: \mathcal M_{GVB,S}\rightarrow \mathcal Log_{S}$ and $f_{Cur}: \mathcal M_{GVB,S}\rightarrow \mathcal Log_{S}$ are the same therefore the relative tangent bundles are also isomorphic. Therefore the theorem follows from \cite[3.8]{28}, proposition \ref{RelTan} and proposition \ref{logs}.
\end{proof}

\begin{rema}
It is easy to see that although the universal bundle $U$ on $\Delta^{st}_S$ may not, in general, descend to the quotient $\mathcal X_S$, the vector bundle $\mathcal End ~~U$ descends to $\mathcal X_S$. It is because the $GL(N)-$ action at a stable bundle has stabilizer $\mathbb C^*$ and the action of the stabilizer on the bundle $\mathcal End ~~U$ is trivial and therefore the bundle $\mathcal End ~~U$ is in fact $PGL(N)$ equivariant. Therefore, the vector bundle $\mathcal End~~U $ descends. We denote it by $\mathcal End ~~\mathcal E$. Similarly, the vector bundle $R^1\pi_*\mathcal End~~U$ (and $\mathbb R^1\pi_*\mathcal C_*$) also descends to $\mathcal M_{GVB,S}$ (and $\mathcal M_{GHB,S}$). 
\end{rema}

\section{\textbf{Relative Log symplectic structure on $\mathcal M_{GHB,S}$}}

In this section, we will show that there is a relative log symplectic structure on $\mathcal M_{GHB, S}\rightarrow S$ and also describe it functorially.

Consider the following composite morphism.

\begin{equation}
\begin{tikzcd}
\wedge^2 \Omega_{\mathcal M_{GHB,S/S}} ({log} \mathcal M_{GHB})\arrow{r} & \mathcal M_{GHB,S}\arrow{r} & S
\end{tikzcd}
\end{equation}

Now over the generic point $\eta$ of $S$, the first projection map has a natural section which corresponds to the symplectic form on moduli of Higgs bundles over the generic curve (\cite[Section 8]{14},\cite[Section 4]{7}, \text{subsection}~~ \ref{SymCur} ~~\text{and}~~ \text{remark}~~ \ref{Liou441}). Let us denote the generic fibre of $\mathcal M_{GHB, S}\rightarrow S$ by $\mathcal M_{HB}$. It is an open subset of $\mathcal M_{GHB, S}$.

Consider the relative logarithmic cotangent bundle.

\begin{equation}
\begin{tikzcd}
\Omega_{\mathcal M_{GVB,S/S}}({log} \mathcal M_{GVB})\arrow{r} & \mathcal M_{GVB,S}\arrow{r} & S
\end{tikzcd}
\end{equation}

From theorem \ref{Trel1}, it follows that for any stable Gieseker vector bundle $(X_r, \mathcal E)$
\begin{equation}
\Omega_{\mathcal M_{GVB,S/S}} ({log} \mathcal M_{GVB})|_{(X_r, \mathcal E)}\cong H^1(X_r, \mathcal End \mathcal E)^*
\end{equation}

\subsubsection{\textbf{Few properties of the map $\pi_r$}}\label{ProPro123} Let us remind here that $\pi_r$ is the projection morphism $X_r\rightarrow X_0$ and recall the following facts from \cite[Proposition 3]{25}
\begin{enumerate}
\item $\pi_r^* \omega_{X_0}\cong \omega_{X_r}$,
\item $R^i(\pi_r)_* \mathcal O_{X_r}=0,$ for all $i>0$,
\item $(\pi_r)_*\mathcal O_{X_r}\cong \mathcal O_{X_0}$.
\end{enumerate}

Now, using Serre duality for nodal curves and the properties above, we see that
\begin{equation}
H^1(X_r, \mathcal End \mathcal E)^*\cong H^0(X_r, \mathcal End \mathcal E\otimes \pi^*_r\omega_{X_0})\cong Hom(\mathcal E, \mathcal E\otimes \pi^*_r\omega_{X_0})
\end{equation}

Therefore we have a morphism $\Omega_{\mathcal M_{GVB,S/S}} ({log} \mathcal M_{GVB})\rightarrow \mathcal M_{GHB,S}$, which is clearly injective. The objects of $\Omega_{\mathcal M_{GVB,S/S}} ({log} \mathcal M_{GVB})$ are precisely those Gieseker-Higgs bundles whose underlying Gieseker vector bundle is stable. By the openness of stability of Gieseker vector bundles it follows that $\Omega_{\mathcal M_{GVB,S/S}} ({log} \mathcal M_{GVB})\rightarrow \mathcal M_{GHB,S}$ is an open immersion.

There is a natural relative log-symplectic structure $\omega$ on $\Omega_{\mathcal M_{GVB,S/S}} ({log} \mathcal M_{GVB})$ (theorem \ref{log-sym1}). Now consider the union of the two open subvarieties $\mathcal M_{HB}\cup \Omega_{\mathcal M_{GVB,S/S}} ({log} \mathcal M_{GVB})$. The symplectic structure on $\mathcal M_{HB}$ and the relative log-symplectic structure on $\Omega_{\mathcal M_{GVB,S/S}} ({log} \mathcal M_{GVB})$ agree on the intersection $\mathcal M_{HB}\cap \Omega_{\mathcal M_{GVB,S/S}} ({log} \mathcal M_{GVB})=\Omega_{\mathcal M_{VB}}$. Therefore there is a relative log-symplectic structure on the union of the two open subsets $\mathcal M_{HB}\cup \Omega_{\mathcal M_{GVB,S/S}} ({log} \mathcal M_{GVB})$. The following lemma shows that $\mathcal M_{HB}$ is a dense open subset of $\mathcal M_{GHB,S}$.

\begin{lema}\label{Dense1}
Let $(X_r,\mathcal E,\phi: \mathcal E\rightarrow \mathcal E\otimes \pi^*_r\omega_{X_0})$ be any stable Gieseker-Higgs bundle. There exists a family of stable Gieseker-Higgs bundles $(\mathcal X^{mod}_S,\mathcal E_S, \phi_S: \mathcal E_S\rightarrow \mathcal E_S\otimes \omega_S)$ over a complete discrete valuation ring $S$, whose generic fiber is a stable Higgs bundle over the smooth curve $\mathcal X_{\eta}$ and the special fiber is $(X_r,\mathcal E,\phi)$.
\end{lema}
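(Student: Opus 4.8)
The plan is to read the statement as a density assertion and then convert density into the existence of a one-parameter family by a standard trait argument. Concretely, I claim that the generic fibre $\mathcal M_{HB}$ is dense in the total space $\mathcal M_{GHB,S}$, so that the given point $p:=[(X_r,\mathcal E,\phi)]$ of the special fibre is a specialisation of a point lying over $\eta$; pulling back the universal family of Proposition \ref{Universal} along a suitable discrete valuation ring then produces the desired degeneration. The whole argument rests on flatness and therefore avoids any direct smoothing of $(X_r,\mathcal E,\phi)$.

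First I would exploit flatness. By \cite[Theorem 1.1]{3} the scheme $\mathcal M_{GHB,S}$ is flat over the DVR $S$, so the image of a uniformiser $t$ is a non-zero-divisor in $\mathcal O_{\mathcal M_{GHB,S}}$. Hence no associated prime, and in particular no minimal prime, of $\mathcal M_{GHB,S}$ contains $t$; equivalently, no irreducible component of $\mathcal M_{GHB,S}$ is contained in the special fibre $\mathcal M_{GHB}=V(t)$. Thus every irreducible component dominates $S$, its generic point lies over $\eta$, and the open subset $\mathcal M_{HB}$ (the fibre over $\eta$) meets every component. This is exactly the density of $\mathcal M_{HB}$ in $\mathcal M_{GHB,S}$.

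Next I would build the trait. Choose an irreducible component $W$ of $\mathcal M_{GHB,S}$ through $p$, with its reduced structure; by the previous step its generic point $w$ lies over $\eta$, and $p$ is a specialisation of $w$. The local ring $\mathcal O_{W,p}$ is a Noetherian local domain of dimension $\ge 1$ with fraction field $K(W)$, so by the standard existence of traits (a DVR dominating a Noetherian local domain inside its fraction field, produced e.g. by blowing up and normalising, or via Krull–Akizuki) there is a DVR $R$ together with a morphism $\operatorname{Spec} R\to W\hookrightarrow \mathcal M_{GHB,S}$ sending the closed point to $p$ and the generic point to $w$. Passing to the completion $\widehat R$, which is again a DVR and receives a compatible local map from $\mathcal O_{S,\eta_0}$, I obtain a complete DVR whose induced morphism $\operatorname{Spec}\widehat R\to S$ sends the closed point to $\eta_0$ and the generic point to $\eta$. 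Pulling back the universal Gieseker–Higgs bundle (Proposition \ref{Universal}) along $\operatorname{Spec}\widehat R\to\mathcal M_{GHB,S}$ gives a family $(\mathcal X^{mod}_{\widehat R},\mathcal E_{\widehat R},\phi_{\widehat R})$ of stable Gieseker–Higgs bundles whose closed fibre is the object classified by $p$, namely $(X_r,\mathcal E,\phi)$, and whose generic fibre lies over $\eta$, hence is a stable Higgs bundle on the smooth curve $\mathcal X_\eta$ (base-changed to $\operatorname{Frac}\widehat R$); in particular $\mathcal X^{mod}_{\widehat R}$ has smooth generic fibre. Renaming $\widehat R$ as $S$ yields the asserted family.

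The only substantive step is the density claim, and it is dispatched entirely by flatness: its content is precisely that $\mathcal M_{GHB,S}$ has no ``vertical'' components supported on the special fibre. I would emphasise that this is why the flatness route is preferable to a hands-on deformation-theoretic proof, where one would instead have to analyse the obstruction to deforming the Higgs field $\phi$ as the nodal curve $X_0$ is smoothed to $\mathcal X_\eta$; flatness of the already-constructed moduli space makes that obstruction analysis unnecessary.
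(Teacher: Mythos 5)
Your proposal is correct and follows essentially the same route as the paper, which simply cites flatness of $\mathcal M_{GHB,S}\rightarrow S$ (together with openness of flat morphisms) to get density of the generic fibre and then the existence of the desired one-parameter family. Your write-up merely fills in the standard details (no vertical components, existence of a trait through $p$ dominating the generic point, pullback of the universal family from Proposition \ref{Universal}), all of which are sound.
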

\begin{proof}
The proof follows from the openness of a flat morphism and the fact that the relative moduli space $\mathcal M_{GHB, S}\rightarrow S$ is flat over $S$ \cite[Theorem 1.1]{3}.
\end{proof}

\begin{thm}\label{Main1}
There is a relative logarithmic-symplectic form on $\mathcal M_{GHB, S}\rightarrow S$, which is the classical symplectic form on the generic fibre and is a log-symplectic form on the special fibre.
\end{thm}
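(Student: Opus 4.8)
The plan is to extend the relative log-symplectic form already constructed on the dense open subset $U:=\mathcal M_{HB}\cup \Omega_{\mathcal M_{GVB,S/S}}(\log \mathcal M_{GVB})$ to all of $\mathcal M_{GHB,S}$, and then to check that the extension is again relatively closed and non-degenerate. On $U$ the form $\tilde\omega$ is a section of the locally free sheaf $\Omega^2_{\mathcal M_{GHB,S/S}}(\log \mathcal M_{GHB})$; since $\mathcal M_{GHB,S}$ is regular (hence normal) and this sheaf is locally free, such a section extends uniquely to a regular section on the whole space as soon as the complement $\mathcal M_{GHB,S}\setminus U$ has codimension at least two. So the first task is to control that complement.

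Because the whole generic fibre $\mathcal M_{HB}$ lies in $U$, the locus $\mathcal M_{GHB,S}\setminus U$ sits inside the special fibre $\mathcal M_{GHB}$ and consists exactly of those stable Gieseker--Higgs bundles $(X_r,\mathcal E,\phi)$ whose underlying Gieseker vector bundle $\mathcal E$ is \emph{unstable}. By openness of stability of Gieseker vector bundles the stable-bundle locus is a nonempty open subset of each component of $\mathcal M_{GHB}$, so the unstable locus is a proper closed subset of $\mathcal M_{GHB}$; since $\mathcal M_{GHB}$ is itself a divisor, this already forces codimension at least two in $\mathcal M_{GHB,S}$, and, exactly as in the classical Biswas--Ramanan/Bottacin picture recalled in \S\ref{SymCur}, the codimension inside the fibre is in fact two. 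Hence $\tilde\omega$ extends to a regular relative $2$-form $\omega_S\in H^0(\mathcal M_{GHB,S},\Omega^2_{\mathcal M_{GHB,S/S}}(\log \mathcal M_{GHB}))$. Closedness is then immediate: $d\omega_S$ is a section of $\Omega^3_{\mathcal M_{GHB,S/S}}(\log\mathcal M_{GHB})$ vanishing on the dense open $U$, and a section of a locally free (in particular torsion-free) sheaf that vanishes on a dense open set is zero, so $d\omega_S=0$ globally.

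The main obstacle is relative non-degeneracy at the bad points, i.e. along $\mathcal M_{GHB,S}\setminus U$, since non-degeneracy is not preserved under a mere extension across a codimension-two locus. To deal with this I would give an intrinsic pointwise description of $\omega_S$ and prove it is non-degenerate at \emph{every} point. By Theorem \ref{Tann} the relative log-tangent space at $(X_r,\mathcal E,\phi)$ is $\mathbb H^1(\mathcal C_\bullet)$, and the hypercohomology pairing
\begin{equation}
\mathbb H^1(\mathcal C_\bullet)\times \mathbb H^1(\mathcal C_\bullet)\xrightarrow{\ \cup\ } \mathbb H^2(\mathcal C_\bullet\otimes \mathcal C_\bullet)\xrightarrow{\ \mathrm{Trace}\ } \mathbb H^2(\omega_{X_r}[-1])\cong H^1(X_r,\omega_{X_r})\cong \mathbb C
\end{equation}
of \eqref{Pyaar1}--\eqref{Pyaar2} defines a skew-symmetric bilinear form there; by Remark \ref{Liou441} this construction is valid on the (possibly nodal) Gieseker curve $X_r$, because Serre duality holds on the Gorenstein curve $X_r$ with dualizing sheaf $\omega_{X_r}$. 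Non-degeneracy of this pairing follows from the perfectness of the Serre-duality pairing between $\mathbb H^1(\mathcal C_\bullet)$ and $\mathbb H^1(\mathcal C_\bullet^\vee)$ together with the self-duality isomorphism $\mathcal C_\bullet^\vee\cong\mathcal C_\bullet$ of diagram \eqref{Pair}, once $\mathbb H^0(\mathcal C_\bullet)=\mathbb H^2(\mathcal C_\bullet)=0$; these vanishings hold at every \emph{stable} Gieseker--Higgs bundle, including those with unstable underlying bundle. Packaging these pairings over the universal complex on the universal curve of Proposition \ref{Universal} shows that they assemble into a regular section which coincides with $\omega_S$ on the dense open $U$, hence with $\omega_S$ everywhere; thus $\omega_S$ is non-degenerate on $T_{\mathcal M_{GHB,S}/S}(-\log \mathcal M_{GHB})$ at every point.

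Finally I would record that $\omega_S$ restricts correctly. On the generic fibre the pointwise pairing is, by construction, precisely the cocycle form \eqref{Pyaar2} of the classical Hitchin symplectic form recalled in \S\ref{SymCur}, so $\omega_S|_{\mathcal M_{HB}}$ is the classical symplectic form; and on $\Omega_{\mathcal M_{GVB,S/S}}(\log\mathcal M_{GVB})$ it agrees with the tautological relative log-symplectic form $-d\lambda$ of Theorem \ref{log-sym1}, since $\lambda$ has the cocycle expression \eqref{Liou1}--\eqref{eq109}. Combining the extension (regularity), density (closedness), the pointwise hypercohomology pairing (non-degeneracy everywhere) and these two restriction identities yields the relative logarithmic-symplectic form asserted in the theorem. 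The technical heart of the argument is thus twofold: the codimension-two estimate for the unstable locus on the nodal special fibre, and the vanishing $\mathbb H^0(\mathcal C_\bullet)=\mathbb H^2(\mathcal C_\bullet)=0$ together with Serre duality on the Gorenstein Gieseker curve, which jointly secure non-degeneracy across the bad locus.
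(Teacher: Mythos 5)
Your proposal follows essentially the same route as the paper's proof of Theorem \ref{Main1}: the heart of both arguments is the pointwise skew pairing on $\mathbb H^1(\mathcal C_\bullet)$ coming from the sign-twisted isomorphism of complexes $\mathcal C_\bullet^{\vee}\to\mathcal C_\bullet$ combined with duality on the Gorenstein curve $X_r$, and in both arguments closedness is obtained by matching the resulting form with the known closed form on the dense open subset $\mathcal M_{HB}\cup\Omega_{\mathcal M_{GVB,S/S}}(\log\mathcal M_{GVB})$. Two corrections are needed. First, your claimed vanishings $\mathbb H^0(\mathcal C_\bullet)=\mathbb H^2(\mathcal C_\bullet)=0$ are false: for a stable Gieseker--Higgs bundle $\mathbb H^0(\mathcal C_\bullet)$ is the one-dimensional space of scalar endomorphisms commuting with $\phi$, and the paper itself quotes $\dim\mathbb H^2(\mathcal C_\bullet)=1$ from \cite[Proposition 5.3]{3}. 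Fortunately the non-degeneracy you are after does not require these vanishings: the duality pairing between $\mathbb H^1(\mathcal C_\bullet^{\vee})$ and $\mathbb H^1(\mathcal C_\bullet)$ is perfect in degree one regardless, and composing it with the isomorphism $\mathbb H^1(\mathcal C_\bullet^{\vee})\cong\mathbb H^1(\mathcal C_\bullet)$ induced by the sign-twisted identification of complexes already yields the non-degenerate skew form, so the step survives with the false premise deleted. Second, the Hartogs-type extension across the locus of unstable underlying bundles is redundant scaffolding: once the pairing is defined fibrewise from the universal complex it is a global section of $\wedge^2\Omega_{\mathcal M_{GHB,S}/S}(\log\mathcal M_{GHB})$ on all of $\mathcal M_{GHB,S}$ (this is exactly what the paper does), so the codimension-two estimate is never needed. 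This is just as well, since your justification of that estimate --- that the stable-underlying-bundle locus meets every irreducible component of the special fibre, and that its complement has fibrewise codimension two ``as in the classical picture'' --- is asserted rather than proved.
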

\begin{proof}
Given a Gieseker-Higgs bundle $(X_r, \mathcal E,\phi)$ consider the following complex $\mathcal C_{\bullet}$:
\begin{equation}
0\rightarrow \mathcal End \mathcal E\xrightarrow{-[\phi,\bullet]} \mathcal End \mathcal E\otimes \pi^*_r\omega_{X_0}\rightarrow 0,
\end{equation}
where $[\phi,\bullet](s)=\phi\circ s-(\mathbb 1\otimes s)\circ \phi$. The log-cotangent space of the moduli $\mathcal M_{GHB}$ at $(X_r, \mathcal E,\phi)$ is isomorphic to $\mathbb H^1(\mathcal C_{\bullet})$ (\text{subsection}~~ \ref{SymCur} ~~\text{and}~~ \text{remark}~~ \ref{Liou441}). The dual of this complex (let us denote by $\mathcal C^{\vee}_{\bullet}$) is
\begin{equation}
0\rightarrow \mathcal End \mathcal E\xrightarrow{[\phi,\bullet]} \mathcal End \mathcal E\otimes \pi^*_r\omega_{X_0}\rightarrow 0
\end{equation}

We have the following isomorphism of complexes:

\begin{equation}\label{Pair}
\begin{tikzcd}
\mathcal End \mathcal E \arrow{d}{[\phi, \bullet]}\arrow[" \mathbb 1"]{r} & \mathcal End \mathcal E\arrow{d}{-[\phi, \bullet]}\\
\mathcal End \mathcal E\otimes \pi^*_r\omega_{X_0} \arrow["-\mathbb 1\otimes \mathbb 1"]{r} & \mathcal End \mathcal E\otimes \pi^*_r\omega_{X_0}
\end{tikzcd}
\end{equation}

This induces an isomorphism 

\begin{equation}
(\sigma')^{\flat}:\mathbb H^1(\mathcal C_{\bullet}^{\vee})\rightarrow \mathbb H^1(\mathcal C_{\bullet})
\end{equation}

Therefore, we have an isomorphism 
\begin{equation}\label{Pair2}
(\sigma')^{\flat}:\Omega_{\mathcal M_{GHB,S/S}}({log}~\mathcal M_{GHB})|_{(X_r,E,\phi)}\xrightarrow{\cong} T_{ \mathcal M_{GHB,S}} (-{log}~~\mathcal M_{GHB})|_{(X_r,E,\phi)}
\end{equation}

In other words, this gives a non-degenerate relative logarithmic two-form. We denote it by $\omega'$. From the choice of the sign in the diagram \ref{Pair}, it follows that $\omega'$ is skew symmetric. Therefore, this gives a section $\omega':\mathcal M_{GHB,S}\rightarrow \wedge^2 \Omega_{\mathcal M_{GHB,S/S}} ({log}~~\mathcal M_{GHB})$. Following theorem \ref{log-sym1}, we have another section $\omega$ over $\mathcal M_{HB}\cup \Omega_{\mathcal M_{GVB,S/S}} ({log} \mathcal M_{GVB})$, and from \cite[Section 4]{7}, \cite[Theorem 4.5.1.]{8},  and \cite[Section 8]{14} it follows that these two sections are the same on the open subset $\mathcal M_{HB}$ (see also  \text{subsection}~~ \ref{SymCur} ~~\text{and}~~ \text{remark}~~ \ref{Liou441}). Therefore these two $2$-forms are the same and hence $\omega'$ is the extension of the relative log-symplectic form $\omega$ discussed in theorem \ref{log-sym1}. 

\end{proof}

\begin{rema}\label{Reducible}
Suppose that the special fiber of the surface $\mathcal X\rightarrow S$ is a reducible curve of the form $C_1\cup C_2$, where $C_1$ and $C_2$ are smooth curves transversally intersecting at a point $C_1\cap C_2$. We can similarly construct the moduli of Gieseker-Higgs bundles in this case. If we concentrate on the case $(\chi, r)=1$, we can ensure that all the semistable objects are stable for a generic choice of polarisation. As a result, the moduli is a variety with normal crossing singularity (see \cite[remark 9.3]{3} and \cite{4} for details). It follows similarly that there is a relative log-symplectic form on this degeneration. A particular case interesting for many computations is when the rank is $2$, and $\chi$ is odd. For a generic choice of polarisation, one can show that the moduli of stable torsion-free Hitchin pairs on the nodal curve $C_1\cup C_2$ consists of $(\mathcal F, \phi)$, where $\mathcal F$ has local types either $\mathcal O\oplus \mathcal O$ or $\mathcal O\oplus m$. Since we can avoid the local type $m\oplus m$, the moduli of stable torsion-free Higgs pair coincides with the moduli of Gieseker-Higgs bundles (\cite[remark 9.3]{3} and \cite{4}). We will show in \S 7 that in this special case the special fibre $\mathcal M_{GHB}$ is union of two smooth log-symplectic manifolds transversally intersecting along a divisor.
\end{rema}

\section{\textbf{Moduli space of Higgs bundles on a fixed Gieseker curve}}
In this section, we discuss about the moduli of Higgs bundles of rank $n$ and Euler-characteristic $\chi$ on a fixed Gieseker curve $X_r$ such that $g.c.d(n,\chi)=1$. In \cite{22}, Kiem and Li introduced and studied semi-stability of vector bundles on a fixed Gieseker curve with respect to a special polarisation on the curve whose degree on every rational component is sufficiently smaller (in ratio) than the degree on the curve $\tilde{X_0}$. We will refer to this notion of semistability as the $\epsilon$-semistability. Moreover, they show that $\epsilon$-semistable vector bundles are quasi-Gieseker vector bundles, i.e., they are standard bundles \eqref{stan450} and the push-forward is a torsion-free sheaf on the nodal curve $X_0$. Later in \cite{33}, Sun introduced the notion of $0$-semi-stability of vector bundles on a fixed Gieseker curve and showed that when the Euler characteristic of the bundle is positive and co-prime to the rank, the two notions, namely $\epsilon$-semistability and $0$-semistabilty coincide. In this section, we adapt these notions for the Higgs bundles on a fixed Gieseker curve.

To motivate this, we first notice that the moduli of Gieseker-Higgs bundles $\mathcal M_{GHB}$ has the following Whitney stratification given by the successive singular locus.
\begin{equation}\label{strat2055}
\mathcal M_{GHB}\supset \partial^1\mathcal M_{GHB}\supset \dots\supset \partial^n\mathcal M_{GHB} \supset \cdots
\end{equation}

The purpose of the discussion in this section is to show that each stratum is a torus quotient of the moduli of $\epsilon$-semistable Higgs bundles on a fixed Gieseker curve. 

Let $0\leq \epsilon<<1$ be an arbitrarily small non-negative number. Since the purpose is to describe the stratum, as mentioned above, it is safe to \textbf{assume that $\chi(\mathcal E)>0$} (see also remark \ref{chi1}).

\begin{defe}
A Higgs bundle $(X_r, \mathcal E,\phi)$ is called $\epsilon$-semistable ($\epsilon$ stable) if for all $\phi$-invariant subsheaf $\mathcal F$ of $\mathcal E$ we have
\begin{equation}
\chi(\mathcal F)\leq (<) \frac{\chi(\mathcal E)}{n}rk_{\epsilon}(\mathcal F),
\end{equation}
where $rk_{\epsilon}(\mathcal F):=(1-r\epsilon)\text{rank}~~ \mathcal F|_{\tilde{X_0}}+\epsilon\sum^{r}_{i=1}(\text{rank}~~ \mathcal F|_{R_i})$
\end{defe}

\begin{defe}
A Higgs bundle $(X_r, \mathcal E,\phi)$ is called $0$-semistable if for all $\phi$-invariant subsheaf $\mathcal F$ of $\mathcal E$ we have
\begin{equation}
\chi(\mathcal F)\leq \frac{\chi(\mathcal E)}{n}rk_{0}(\mathcal F),
\end{equation}
 
and it is called $0$-stable if it is $0$-semistable and 

\begin{equation}
\chi(\mathcal F)\leq \frac{\chi(\mathcal E)}{n}rk_{0}(\mathcal F), ~~~\text{when}~~~ rk_0(\mathcal F)\neq 0.
\end{equation}
\end{defe}

The proof of the following two lemmas are elementary; we leave it to the reader.

\begin{lema}
$(X_r, \mathcal E,\phi)$ is $\epsilon$-stable if and only if it is $0$-stable.  
\end{lema}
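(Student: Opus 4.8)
The plan is to reduce the statement to an elementary comparison of ``stability defects'' and then play an integrality fact off against a smallness fact. For a $\phi$-invariant subsheaf $\mathcal F\subseteq\mathcal E$ write $a:=rk_0(\mathcal F)=\mathrm{rank}(\mathcal F|_{\tilde X_0})$ and $r_i:=\mathrm{rank}(\mathcal F|_{R_i})$, and set
\begin{equation}
P_0(\mathcal F):=\tfrac{\chi(\mathcal E)}{n}\,rk_0(\mathcal F)-\chi(\mathcal F),\qquad P_\epsilon(\mathcal F):=\tfrac{\chi(\mathcal E)}{n}\,rk_\epsilon(\mathcal F)-\chi(\mathcal F).
\end{equation}
The definition of $rk_\epsilon$ yields the exact identity
\begin{equation}
P_\epsilon(\mathcal F)=P_0(\mathcal F)+\epsilon\,\tfrac{\chi(\mathcal E)}{n}\sum_{i=1}^{r}(r_i-a).
\end{equation}
Two observations drive everything. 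First, $P_0(\mathcal F)=\tfrac1n\big(\chi(\mathcal E)\,a-n\,\chi(\mathcal F)\big)\in\tfrac1n\mathbb Z$, so $P_0(\mathcal F)$ is either $0$ or of absolute value at least $1/n$. Second, since $0\le a,r_i\le n$, the correction term is bounded by $\epsilon\,|\chi(\mathcal E)|\,r$ \emph{uniformly} in $\mathcal F$; I would fix $\epsilon>0$ small enough that this bound is $<1/n$ (the case $\epsilon=0$ being trivial).

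\textbf{$0$-stable $\Rightarrow\epsilon$-stable.} I argue by cases on $a$. If $a\neq0$, then $0$-stability gives $P_0(\mathcal F)>0$, hence $P_0(\mathcal F)\ge 1/n$ by quantization, and this strictly dominates the correction term, so $P_\epsilon(\mathcal F)>0$. If $a=0$, then $rk_\epsilon(\mathcal F)=\epsilon\sum_i r_i$ and $0$-semistability gives $\chi(\mathcal F)\le 0$; when $\chi(\mathcal F)\le -1$ the nonnegative first term of $P_\epsilon$ already makes it positive, and when $\chi(\mathcal F)=0$ I use $\chi(\mathcal E)>0$, $\epsilon>0$ and $\sum_i r_i>0$ to conclude $P_\epsilon(\mathcal F)>0$. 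Here $\sum_i r_i>0$ because a nonzero torsion-free subsheaf with $a=0$ must have positive rank on some $R_i$.

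\textbf{$\epsilon$-stable $\Rightarrow 0$-stable.} Again split on $a$. For $0<a<n$ the hypothesis $\gcd(n,\chi(\mathcal E))=1$ forces $\chi(\mathcal E)\,a\not\equiv 0\pmod n$, so $P_0(\mathcal F)\neq0$; were $P_0(\mathcal F)<0$ we would get $P_0(\mathcal F)\le -1/n$ and hence $P_\epsilon(\mathcal F)<0$, contradicting $\epsilon$-stability, so $P_0(\mathcal F)>0$. For $a=n$ the inequality $rk_\epsilon(\mathcal F)\le n$ shows that $\epsilon$-stability already yields $\chi(\mathcal F)<\chi(\mathcal E)$, i.e. $P_0(\mathcal F)>0$. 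For $a=0$, $\epsilon$-stability gives $\chi(\mathcal F)<\epsilon\,\chi(\mathcal E)\,r<1$, whence $\chi(\mathcal F)\le 0$; this is exactly $0$-semistability, and no strictness is required since $rk_0(\mathcal F)=0$. Collecting the cases gives $0$-stability.

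The step I expect to require the most care is the locus $a=0$ of subsheaves supported on the rational chain: there $rk_0$ collapses and the $0$-stability condition degenerates to mere semistability, so the comparison rests on the sign hypothesis $\chi(\mathcal E)>0$ and on the absence of nonzero torsion-free subsheaves of total rank zero. The second essential ingredient is the uniform bound on the $\epsilon$-correction, which is what permits a single threshold for $\epsilon$ to work for all $\mathcal F$ at once; this is harmless because the component ranks are bounded by $n$ and the chain length $r$ is fixed. Everything else is bookkeeping with the quantization $P_0(\mathcal F)\in\tfrac1n\mathbb Z$ and the coprimality $\gcd(n,\chi(\mathcal E))=1$.
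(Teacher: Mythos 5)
Your argument is correct. The paper explicitly leaves this lemma to the reader, so there is no proof to compare against, but your route — quantizing the stability defect $P_0(\mathcal F)\in\tfrac1n\mathbb Z$, bounding the $\epsilon$-correction uniformly by $\epsilon\,\chi(\mathcal E)\,r<1/n$, invoking $\gcd(n,\chi(\mathcal E))=1$ to rule out $P_0=0$ when $0<a<n$, and treating the chain-supported case $a=0$ via $\chi(\mathcal E)>0$ and the fact that a nonzero subsheaf of a vector bundle cannot have rank zero on every component — is exactly the intended elementary argument, and all the case checks go through. The only inessential slip is the parenthetical that ``the case $\epsilon=0$ is trivial'': taken literally, $\epsilon=0$-stability demands the strict inequality $\chi(\mathcal F)<0$ for subsheaves with $rk_0(\mathcal F)=0$, which is strictly stronger than $0$-stability; but the lemma is only meaningful for sufficiently small $\epsilon>0$ (the polarisation \eqref{Lanebun} forces $\epsilon\neq0$), so this does not affect your proof.
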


\begin{lema}
Assume $(\chi(\mathcal E), n)=1$. Then
\begin{enumerate}
\item $(X_r, \mathcal E, \phi)$ is $\epsilon$-semistable if and only if it is $\epsilon$-stable. 
\item $(X_r, \mathcal E, \phi)$ is $0$-semistable if and only if it is $0$-stable.
\end{enumerate}
\end{lema}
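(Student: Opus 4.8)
The plan is to prove both equivalences by the classical coprimality argument. In each case the implication stable $\Rightarrow$ semistable is immediate from the definitions, since a strict slope inequality implies the non-strict one; so it suffices to rule out a \emph{strictly} semistable object, that is, a proper nonzero $\phi$-invariant subsheaf $\mathcal F \subsetneq \mathcal E$ realizing equality in the relevant inequality. The engine throughout is that an equality $\chi(\mathcal F) = \frac{\chi(\mathcal E)}{n}\cdot(\text{rank})$ forces $n$ to divide $\chi(\mathcal E)$ times the main-component rank $rk_0(\mathcal F) = \mathrm{rank}\,\mathcal F|_{\tilde X_0}$, which the hypothesis $(\chi(\mathcal E),n)=1$ then obstructs. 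Write $a := rk_0(\mathcal F)$ and $b_i := \mathrm{rank}\,\mathcal F|_{R_i}$, so $0 \le a, b_i \le n$.

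For part $(1)$, I would fix $\epsilon$ to be sufficiently small and generic (e.g.\ irrational, or off the finitely many walls in a small interval), as is standard and consistent with the convention $0\le\epsilon\ll 1$. Suppose $(X_r,\mathcal E,\phi)$ is $\epsilon$-semistable but not $\epsilon$-stable, and let $\mathcal F$ realize equality. Since $rk_\epsilon(\mathcal F) = a + \epsilon(\sum_i b_i - ra)$, the equality reads $n\chi(\mathcal F) = \chi(\mathcal E)\,a + \chi(\mathcal E)\,\epsilon(\sum_i b_i - ra)$. As $\chi(\mathcal F)$ is an integer, $\chi(\mathcal E)\neq 0$, and $\epsilon$ is generic, the $\epsilon$-coefficient $\sum_i b_i - ra$ must vanish and simultaneously $n \mid \chi(\mathcal E)\,a$. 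Coprimality gives $n\mid a$, hence $a\in\{0,n\}$; together with $\sum_i b_i = ra$ this pins every component rank, forcing either all ranks $0$ (so $\mathcal F=0$, as $\mathcal E$ is torsion-free) or all ranks $n$ (so $\mathcal E/\mathcal F$ is a torsion sheaf of Euler characteristic $\chi(\mathcal E)-\chi(\mathcal F)=0$, hence $\mathcal F=\mathcal E$). Both are excluded, proving $(1)$.

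For part $(2)$, the same computation with the genuine integer $rk_0(\mathcal F)=a$ yields $n\chi(\mathcal F)=\chi(\mathcal E)\,a$, so again $a\in\{0,n\}$. The case $a=0$ is exactly the one exempted by the definition of $0$-stability and so does not violate it. For $a=n$ one has $\chi(\mathcal F)=\chi(\mathcal E)$ with $\mathcal F\subsetneq\mathcal E$, so $\mathcal Q:=\mathcal E/\mathcal F$ is a nonzero sheaf supported on the rational chain $R[r]$ with $\chi(\mathcal Q)=0$. Applying $0$-semistability in its quotient form to $\mathcal Q/\mathcal Q_{\mathrm{tors}}$ forces $\mathcal Q$ to be torsion-free; but $\mathcal Q$ is a quotient of the globally generated sheaf $\mathcal E|_{R[r]}$ (using that $\mathcal E$ is strictly standard on the chain), and a nonzero globally generated torsion-free sheaf on a connected chain of projective lines has strictly positive Euler characteristic, contradicting $\chi(\mathcal Q)=0$. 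Alternatively, $(2)$ follows by combining $(1)$ with the coincidence $\epsilon$-semistable $\Leftrightarrow$ $0$-semistable (the Higgs analogue of Sun's theorem, valid since $\chi(\mathcal E)>0$ and $(\chi(\mathcal E),n)=1$) and the previously established equivalence $\epsilon$-stable $\Leftrightarrow$ $0$-stable.

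The main obstacle is the boundary value $a=rk_0(\mathcal F)=n$: the coprimality step cleanly disposes of every \emph{intermediate} main-component rank but says nothing here, so one must instead use the geometry of the rational tails, namely that the quotient is globally generated on a chain of $\mathbb P^1$'s and therefore cannot have vanishing Euler characteristic. This is precisely the rational-tail phenomenon that the definition of $0$-stability encodes through its $rk_0(\mathcal F)\neq 0$ caveat, and that in the $\epsilon$-setting is instead tamed by choosing $\epsilon$ generic so that the perturbation term $\epsilon(\sum_i b_i - ra)$ determines all the component ranks at once.
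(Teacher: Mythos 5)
The paper gives no proof of this lemma (it is explicitly left to the reader), so your argument stands on its own. Part (1) is correct: stable $\Rightarrow$ semistable is formal, and in the equality case the identity $n\chi(\mathcal F)-\chi(\mathcal E)a=\chi(\mathcal E)\,\epsilon\,(\sum_i b_i-ra)$, with integer left-hand side, forces both sides to vanish, after which coprimality gives $a\in\{0,n\}$ and the two boundary cases collapse to $\mathcal F=0$ or $\mathcal F=\mathcal E$ exactly as you say. One refinement: you do not need $\epsilon$ irrational or generic. The right-hand side is bounded in absolute value by $\chi(\mathcal E)\,r\,n\,\epsilon$, so any $0<\epsilon<1/(\chi(\mathcal E)rn)$ already forces the integer $n\chi(\mathcal F)-\chi(\mathcal E)a$ to vanish and then $\sum_i b_i=ra$; this matches the paper's convention that $\epsilon$ is merely arbitrarily small.

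Part (2) has a genuine gap at exactly the point you flag parenthetically: you invoke that $\mathcal E$ is \emph{strictly standard} on the chain, so that $\mathcal Q/\mathcal Q_{\mathrm{tors}}$ is a quotient of a globally generated sheaf. But standardness is neither a hypothesis of the lemma nor a consequence of $0$-semistability. Running the subsheaf test on $K=\ker(\mathcal E\to\mathcal O_{R[r]_i}(-m)^{\oplus a})$, which has $rk_0(K)=n$ and $\chi(K)=\chi(\mathcal E)+a(m-1)$, shows that $0$-semistability only forces $m\le 1$, i.e.\ it tolerates $\mathcal O(-1)$ summands on the rational components; and precisely such a summand produces a $\phi$-invariant subsheaf $\mathcal F=\ker(\mathcal E\to\mathcal O_{R[r]_i}(-1))$ with $\chi(\mathcal F)=\chi(\mathcal E)$ and $rk_0(\mathcal F)=n$, i.e.\ an equality case with $rk_0\neq 0$ that your coprimality step cannot see and that violates $0$-stability. (Contrast with the $\epsilon$-setting, where $rk_\epsilon(\mathcal F)=n-\epsilon a<n$ kills this subsheaf; this also shows that your alternative route via ``$\epsilon$-semistable $\Leftrightarrow$ $0$-semistable'' suffers from the same issue for non-standard bundles.) So statement (2) as literally written needs the standing assumption that $\mathcal E|_{R[r]_i}$ has no negative-degree summands — harmless for the paper, since the lemma is only ever applied to $\epsilon$-semistable objects, which are standard by part (1) combined with Lemma \ref{stan} — but your proof must state and justify it rather than assume it. Once standardness is granted, the rest of your chain in (2) (torsion-freeness of $\mathcal Q$ from the quotient form of semistability, global generation of the pure quotient, and $H^1=0$ on a chain of $\mathbb P^1$'s giving $\chi(\mathcal Q)=h^0(\mathcal Q)>0$) is correct.
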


\begin{lema}\label{stan}
If $(X_r, \mathcal E,\phi)$ is $\epsilon$-stable then $\mathcal E$ satisfies the following two properties.
\begin{enumerate}\label{stan450}
\item $\mathcal E$ is a standard vector bundle i.e., 
\begin{equation}
\mathcal E|_{R[r]_i}\cong \mathcal O^{\oplus a_i}\oplus \mathcal O(1)^{\oplus b_i}~~\text{ for all}~~ i=1,\dots, r, ~~\text{and}
\end{equation} 
\item ${\pi_r}_*\mathcal E$ is a torsion-free sheaf. 
\end{enumerate}
\end{lema}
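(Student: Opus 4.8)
The plan is to derive both properties exactly as in the vector-bundle case treated by Nagaraj--Seshadri, Kiem--Li and Sun, the only genuinely new point being that every destabilizing subsheaf or quotient produced on the rational chain can be chosen $\phi$-invariant. The mechanism that makes this automatic is the triviality of the dualising sheaf along the contracted chain. Write $R_i:=R[r]_i\cong\mathbb P^1$, with special points $p_i$ and $p_{i+1}$, where $p_1$ and $p_{r+1}$ are the two points glued to $\tilde X_0$. By Remark~\ref{Rem101} we have $\omega_{X_r}\cong\pi_r^*\omega_{X_0}$, hence $\omega_{X_r}|_{R_i}\cong\mathcal O_{R_i}$, so that $\phi|_{R_i}$ is an honest endomorphism of the bundle $\mathcal E|_{R_i}$. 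Writing $\mathcal E|_{R_i}\cong\bigoplus_j\mathcal O(d_{ij})$ by Grothendieck's theorem and using $\mathrm{Hom}(\mathcal O(a),\mathcal O(b))=0$ for $a>b$, the endomorphism $\phi|_{R_i}$ is upper-triangular for the grading by degree: it sends $\bigoplus_{d_{ij}\ge t}\mathcal O(d_{ij})$ into itself for each $t$, and correspondingly the minimal-degree graded piece is a $\phi|_{R_i}$-quotient. Since restriction to $R_i$ commutes with $\phi$ under the trivialisation of $\omega_{X_r}|_{R_i}$, any sub- or quotient sheaf of $\mathcal E$ built from these graded pieces is $\phi$-invariant on all of $X_r$.

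For property~(1), I would bound the splitting degrees on both sides. Suppose first $d_{ij}\ge 2$ for some $j$. The $\phi$-invariant subbundle $G:=\bigoplus_{d_{ij}\ge 2}\mathcal O(d_{ij})\subseteq\mathcal E|_{R_i}$ has $G(-p_i-p_{i+1})$ globally generated with a nonzero section; such a section vanishes at the two special points, hence extends by zero across the nodes to a nonzero $\phi$-invariant subsheaf $\mathcal F\subseteq\mathcal E$ with $\mathrm{rank}(\mathcal F|_{\tilde X_0})=0$ and $\chi(\mathcal F)=\sum_{d_{ij}\ge2}(d_{ij}-1)\ge1$. Then $rk_{\epsilon}(\mathcal F)=\epsilon\,\mathrm{rank}(G)$ is $O(\epsilon)$, so for $\epsilon$ small the required inequality $\chi(\mathcal F)<\tfrac{\chi(\mathcal E)}{n}rk_{\epsilon}(\mathcal F)$ fails, contradicting $\epsilon$-stability. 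Dually, if $d_{ij}\le-1$ for some $j$, the minimal-degree graded piece gives a $\phi$-invariant quotient $q:\mathcal E\twoheadrightarrow\mathcal Q$ supported on $R_i$ with $rk_{\epsilon}(\mathcal Q)=m\epsilon$ (here $m$ is the multiplicity of the minimal degree) and $\chi(\mathcal Q)=m(d_{\min}+1)\le0$; this violates the equivalent quotient form of $\epsilon$-stability, $\chi(\mathcal Q)>\tfrac{\chi(\mathcal E)}{n}rk_{\epsilon}(\mathcal Q)>0$, where positivity uses $\chi(\mathcal E)>0$. Hence every $d_{ij}\in\{0,1\}$, i.e. $\mathcal E|_{R_i}\cong\mathcal O^{\oplus a_i}\oplus\mathcal O(1)^{\oplus b_i}$.

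For property~(2), I would use that the torsion subsheaf $\mathcal T\subseteq(\pi_r)_*\mathcal E$ is intrinsic, hence preserved by every $\mathcal O_{X_0}$-linear map; in particular $(\pi_r)_*\phi$ maps $\mathcal T$ into $\mathcal T\otimes\omega_{X_0}$. Consequently the subsheaf $\mathcal F\subseteq\mathcal E$ cut out by $\mathcal T$ (the sections supported on the chain, i.e. vanishing at $p_1$ and $p_{r+1}$) is $\phi$-invariant and satisfies $\mathrm{rank}(\mathcal F|_{\tilde X_0})=0$, so $rk_{\epsilon}(\mathcal F)\le rn\epsilon$. A routine Euler-characteristic computation, identical to the vector-bundle case, shows $\chi(\mathcal F)>0$ when $\mathcal T\neq0$; comparing with $rk_{\epsilon}(\mathcal F)=O(\epsilon)$ contradicts $\epsilon$-stability for small $\epsilon$. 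Therefore $\mathcal T=0$ and $(\pi_r)_*\mathcal E$ is torsion-free.

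The hard part is not the numerics, which are exactly the known $\epsilon$-stability estimates of Kiem--Li and Sun, but rather ensuring at each step that the destabilizing object is $\phi$-invariant; this is precisely where the Higgs field enters, and it is handled entirely by the observation that $\omega_{X_r}|_{R[r]}\cong\mathcal O$, which forces $\phi$ to be degree-raising on every contracted $\mathbb P^1$. The one point requiring care is the lower-bound argument when the minimal degree occurs with multiplicity greater than one, where one must take the whole minimal graded piece (rather than a single summand) as the $\phi$-quotient so that its kernel remains $\phi$-invariant.
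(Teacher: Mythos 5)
Your proposal is correct and follows essentially the same route as the paper's proof: both exploit $\omega_{X_r}|_{R[r]_i}\cong\mathcal O_{R[r]_i}$ and the vanishing of $\mathrm{Hom}(\mathcal O(a),\mathcal O(b))$ for $a>b$ to extract $\phi$-invariant destabilizing objects from the extremal pieces of the Grothendieck decomposition (the paper uses the kernel of the projection onto the most negative summand and the twist $\mathcal O(m-2)^{\oplus a}$ of the most positive one, which is equivalent to your sub/quotient form of the degree filtration), and both reduce torsion-freeness of $(\pi_r)_*\mathcal E$ to the vanishing of $H^0(\mathcal E|_{R[r]}(-x^+-x^-))$ via a globally generated $\phi$-invariant subsheaf of positive Euler characteristic. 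Your explicit justification of $\phi$-invariance via upper-triangularity is a welcome elaboration of a step the paper only asserts.
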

\begin{proof}
For each $i\in \{1,\dots, r\}$, we denote by $x^+_i, x^-_i$ the two marked points on the rational curve $R[r]_i$. We denote the Zariski-closure of the curve $X_r\setminus R[r]_i$ by $\tilde X_i$. We denote the two extremal points on $\tilde X_0$ by $x^+$ and $x^-$. 

Suppose $\exists$ an integer $i$ such that $\mathcal E|_{R[r]_i}$ has a negative degree line sub bundle as a direct summand. Let $m$ and $a$ be the largest positive integers such that $\mathcal O_{R[r]_i}(-m)^{\oplus a}$ is a direct summand of $\mathcal E|_{R[r]_i}$. Let $K$ be the kernel of the surjection $ \mathcal E\rightarrow \mathcal O_{R[r]_i}(-m)^{\oplus a}$. It follows that $K$ is a $\phi$-invariant subsheaf of $\mathcal E$. We have 
\begin{equation}
\chi(K)=\chi(\mathcal E)-\chi(\mathcal O_{R_i}(-m)^{\oplus a})=\chi(\mathcal E)+a(m-1).
\end{equation}
Since $\text{rank}~~ K|_{\tilde X_0}=n$, we have $\chi(\mathcal E)+a(m-1)<\chi(\mathcal E) $ which implies $a(m-1)<0$. This is a contradiction. Therefore $m\leq 0$.

Suppose $\exists$ an integer $i$ such that $\mathcal E|_{R[r]_i}$ has a positive degree line sub bundle as a direct summand. Let $m$ and $a$ be the largest positive integers such that $\mathcal O_{R[r]_i}(m)^{\oplus a}$ is a direct summand of $\mathcal E|_{R[r]_i}$. We have $\mathcal E|_{R[r]_i}=\mathcal O_{R[r]_i}(m)^{\oplus a}\oplus M$, where $M$ is a vector bundle on $R[r]_i$. We have a short exact sequence 
\begin{equation}
0\rightarrow \mathcal E|_{\tilde X_i}(-x^+_i-x^-_i)\rightarrow \mathcal E \rightarrow \mathcal E|_{R[r]_i}\rightarrow 0
\end{equation}
From the above short exact sequence it follows that $(\mathcal O_{R[r]_i}(m)^{\oplus a})\otimes \mathcal O_{R[r]_i}(-x^+_i-x^-_i)$ is a subsheaf of $\mathcal E$. Notice that $(\mathcal O_{R[r]_i}(m)^{\oplus a})\otimes \mathcal O_{R[r]_i}(-x^+_i-x^-_i)\cong \mathcal O_{R[r]_i}(m-2)^{\oplus a}$. It follows that $\mathcal O_{R[r]_i}(m-2)^{\oplus a}$ is a $\phi$-invariant subsheaf of $\mathcal E$. The $\epsilon$-stability implies
$$
\chi(\mathcal O_{R[r]_i}(m-2)^{\oplus a})=a(m-1) \leq 0\implies m\leq 1.
$$
Therefore $\mathcal E$ is a standard vector bundle.

The sheaf $\pi_*\mathcal E$ is torsion free if and only if $H^0(\mathcal E|_{R[r]}(-x^+-x^-))=0$. Suppose $ H^0(\mathcal E|_{R[r]}(-x^+-x^-))\neq 0$. Consider the sub-bundle $F$ of $\mathcal E|_{R[r]}(-x^+-x^-)$ generated by $H^0(\mathcal E|_{R[r]}(-x^+-x^-))$. It is a $\phi$-invariant sub-sheaf of $\mathcal E$. Since it is generically generated by global sections, $\chi(F)\geq 1$. But $\epsilon$-stability implies $\chi(F)\leq 0$, which is a contradiction. Therefore, $H^0(\mathcal E|_{R[r]}(-x^+-x^-))=0$.
\end{proof}

\begin{rema}\label{qGH}
If a Higgs bundle $(\mathcal E, \phi)$ on $X_r$ satisfies the condition $(1)$ and $(2)$ in Lemma \ref{stan}, we call it a quasi-Gieseker-Higgs bundle. 
\end{rema}

\begin{defe}
A generalised parabolic Higgs bundle (GPH) on $\tilde X_0$ is a triple $(E, \phi, F(E))$, where $E$ is a vector bundle, $\phi: E\rightarrow E\otimes \omega_{\tilde X_0}(x^++x^-)$ is a homomorphism and $F(E)\subseteq E_{x^+}\oplus E_{x^-}$ is any sub-vector space such that $(q_*\phi) (q_*F(E))\subseteq q_*(F(E))\otimes \omega_{X_0}$, where $q: \tilde X_0\rightarrow X_0$ is the normalisation morphism.
\end{defe}

Given a GPH $(E, \phi, F(E))$ we have the following torsion-free sheaf
\begin{equation}
\mathcal F:=Kernel~~\big(q_*E\rightarrow \frac{E_{x^+}\oplus E_{x^-}}{F(E)}\big)
\end{equation}
Since $(q_*\phi) (q_*F(E))\subseteq q_*(F(E))\otimes \omega_{X_0}$ the morphism $\phi$ induces a homomorphism $\phi_0:\mathcal F\rightarrow \mathcal F\otimes \omega_{X_0}$.
\begin{prop}
The GPH $(E,\phi, F(E))$ is semistable (stable) if and only if the induced torsion-free Higgs pair $(\mathcal F, \phi_0)$ is semi-stable(stable). 
\end{prop}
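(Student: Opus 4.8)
The plan is to set up an inclusion- and $\phi$-compatible dictionary between $\phi_0$-invariant subsheaves of $\mathcal F$ on $X_0$ and $\phi$-invariant sub-GPHs of $(E,\phi,F(E))$ on $\tilde X_0$, under which both the (parabolic) Euler characteristic and the rank are preserved, so that the two stability inequalities become literally the same. This is the Higgs analogue of the standard correspondence between torsion-free sheaves on a nodal curve and generalised parabolic bundles on its normalisation, and the Higgs field only has to be carried along through $\phi$-invariance.

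First I would record the numerical dictionary. From the defining short exact sequence
\[
0\to\mathcal F\to q_*E\to \frac{E_{x^+}\oplus E_{x^-}}{F(E)}\to 0
\]
and the finiteness of $q$ (so $\chi(q_*E)=\chi(E)$), one reads off $\chi(\mathcal F)=\chi(E)-\dim\big((E_{x^+}\oplus E_{x^-})/F(E)\big)$ and $\operatorname{rank}\mathcal F=\operatorname{rank}E=n$. Accordingly I would take the parabolic Euler characteristic of a GPH to be $\operatorname{par-}\chi(E):=\chi(E)-\dim\big((E_{x^+}\oplus E_{x^-})/F(E)\big)$, so that $\operatorname{par-}\chi(E)=\chi(\mathcal F)$ by construction; the GPH-(semi)stability condition for a $\phi$-invariant $E'\subseteq E$ carrying the induced structure $F(E')=F(E)\cap(E'_{x^+}\oplus E'_{x^-})$ then reads $\operatorname{par-}\chi(E')/\operatorname{rank}E'\le(<)\operatorname{par-}\chi(E)/n$.

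Next I would build the correspondence in both directions. Given a $\phi$-invariant subbundle $E'\subseteq E$ with its induced parabolic structure, set $\mathcal F':=\ker\big(q_*E'\to (E'_{x^+}\oplus E'_{x^-})/F(E')\big)$; this is a subsheaf of $\mathcal F$, it is $\phi_0$-invariant because $E'$ is $\phi$-invariant and $F(E')$ is preserved by $q_*\phi$, and the same numerical identity gives $\chi(\mathcal F')=\operatorname{par-}\chi(E')$ with $\operatorname{rank}\mathcal F'=\operatorname{rank}E'$. Conversely, given a $\phi_0$-invariant subsheaf $\mathcal F'\subseteq\mathcal F\subseteq q_*E$, I would let $E'\subseteq E$ be the saturation of the image of the adjoint map $q^*\mathcal F'\to E$, again equipped with $F(E')=F(E)\cap(E'_{x^+}\oplus E'_{x^-})$; this $E'$ is $\phi$-invariant, and passing it back through the first construction returns a subsheaf $\mathcal F''\supseteq\mathcal F'$ of the same rank with $\chi(\mathcal F'')\ge\chi(\mathcal F')$. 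Thus the two constructions are mutually inverse on saturated objects and inclusion preserving.

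Finally, to conclude the equivalence I would observe that in testing either stability condition it suffices to consider extremal subobjects: a destabilising $\phi_0$-invariant subsheaf of $\mathcal F$ may be replaced by its saturation without decreasing $\chi$ and without changing the rank, hence by the $E'$ produced above, while a destabilising sub-GPH already yields a destabilising subsheaf by the first construction. Since $\operatorname{par-}\chi$, rank and $\phi$-invariance all match under the correspondence, the inequality $\operatorname{par-}\chi(E')/\operatorname{rank}E'\le(<)\operatorname{par-}\chi(E)/n$ holds for all $\phi$-invariant $E'$ if and only if $\chi(\mathcal F')/\operatorname{rank}\mathcal F'\le(<)\chi(\mathcal F)/n$ holds for all $\phi_0$-invariant $\mathcal F'$, which is precisely the asserted equivalence. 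The main obstacle I anticipate is the bookkeeping at the two points $x^+,x^-$: verifying that the saturation of $q^*\mathcal F'$ carries exactly the induced parabolic data $F(E)\cap(E'_{x^+}\oplus E'_{x^-})$, and that the correction term $\dim\big((E'_{x^+}\oplus E'_{x^-})/F(E')\big)$ transforms correctly under saturation, since this is the only place where the equivalence could break if the parabolic structure failed to be inherited compatibly.
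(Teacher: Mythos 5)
Your proposal is correct and follows essentially the same route as the paper, which simply states that the proof is similar to Bhosle's Proposition 4.2 on generalised parabolic bundles versus torsion-free sheaves on a nodal curve; your dictionary between $\phi_0$-invariant subsheaves of $\mathcal F$ and $\phi$-invariant sub-GPHs, with matching Euler characteristics and ranks and with saturation handling the converse direction, is precisely the content of that argument carried through $\phi$-invariance. The only point worth making explicit is that the saturation of a $\phi$-invariant subsheaf remains $\phi$-invariant (because the induced map from the torsion quotient to a torsion-free sheaf vanishes), which you implicitly use and which goes through without difficulty.
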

\begin{proof}
The proof is similar to \cite[Proposition 4.2]{Bh}.
\end{proof}

\begin{prop}
If a Higgs bundle $(X_r, \mathcal E, \phi)$ is $\epsilon$-stable, then $((\pi_r)*\mathcal E, (\pi_r)_*\phi)$ is stable torsion-free Higgs pair. If $\mathcal E|_R$ is positive and $((\pi_r)*\mathcal E, (\pi_r)_*\phi)$ stable then $(X_r, \mathcal E, \phi)$ is $\epsilon$-stable. 
\end{prop}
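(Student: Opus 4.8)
The plan is to set up a dictionary between $\phi$-invariant subsheaves of $\mathcal E$ on $X_r$ and $\phi_0$-invariant subsheaves of $\mathcal F:=\pi_*\mathcal E$ on $X_0$, where $\pi:=\pi_r$ and $\phi_0:=\pi_*\phi$, and to read off the two stability conditions by comparing Euler characteristics and ranks across $\pi$. First I would replace $\epsilon$-stability by the equivalent notion of $0$-stability (the equivalence proved just above), so that the only invariant on the $X_r$-side is $rk_0(\mathcal E')=\mathrm{rank}\,\mathcal E'|_{\tilde X_0}$, which is exactly the generic rank of $\pi_*\mathcal E'$. The two numerical facts I need, both consequences of $\S\ref{ProPro123}$ (namely $\pi_*\mathcal O_{X_r}=\mathcal O_{X_0}$, $R^{>0}\pi_*\mathcal O_{X_r}=0$, $\pi^*\omega_{X_0}\cong\omega_{X_r}$) together with the Leray spectral sequence, are: $\mathrm{rank}\,\pi_*\mathcal E'=rk_0(\mathcal E')$, since $\pi$ is an isomorphism over the generic point of $X_0$, and $\chi(\pi_*\mathcal E')=\chi(\mathcal E')+\dim_k R^1\pi_*\mathcal E'$; in particular $\chi(\mathcal F)=\chi(\mathcal E)$ and $\mathrm{rank}\,\mathcal F=n$. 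Finally, it suffices to test stability on saturated subsheaves, since saturation fixes the generic rank on every component (hence $rk_0$) while not decreasing $\chi$.

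For the implication $\epsilon$-stable $\Rightarrow(\mathcal F,\phi_0)$ stable, let $\mathcal G\subsetneq\mathcal F$ be a nonzero saturated $\phi_0$-invariant subsheaf; being saturated and proper in the torsion-free sheaf $\mathcal F$ of rank $n$, it has rank $s$ with $1\le s\le n-1$. I would take $\mathcal E'\subseteq\mathcal E$ to be the saturation of the image of the adjunction map $\pi^*\mathcal G\to\pi^*\mathcal F\to\mathcal E$. It is $\phi$-invariant because $\mathcal G$ is $\phi_0$-invariant and $\omega_{X_r}\cong\pi^*\omega_{X_0}$, and the unit $\mathcal G\xrightarrow{\sim}\pi_*\pi^*\mathcal G$ (projection formula) shows $\mathcal G\subseteq\pi_*\mathcal E'$. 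The essential observation is that $\pi^*\mathcal G|_{R[r]_i}$ is trivial, so its image, and hence the saturation $\mathcal E'|_{R[r]_i}$, is a subbundle of $\mathcal E|_{R[r]_i}$ with only summands of degree $0$ and $1$; therefore $R^1\pi_*\mathcal E'=0$ and $\chi(\pi_*\mathcal E')=\chi(\mathcal E')$. Since $rk_0(\mathcal E')=s$, $0$-stability of $(X_r,\mathcal E,\phi)$ gives
\[
\chi(\mathcal G)\le\chi(\pi_*\mathcal E')=\chi(\mathcal E')<\tfrac{\chi(\mathcal E)}{n}\,s=\tfrac{\chi(\mathcal F)}{n}\,\mathrm{rank}\,\mathcal G,
\]
which is the stability inequality for $(\mathcal F,\phi_0)$. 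This direction uses neither positivity nor anything beyond the standardness supplied by Lemma \ref{stan}.

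For the converse, by the equivalence above it suffices to prove $0$-stability of $(X_r,\mathcal E,\phi)$, that is, $\chi(\mathcal E')<\tfrac{\chi(\mathcal E)}{n}\,rk_0(\mathcal E')$ for every saturated $\phi$-invariant $\mathcal E'\subsetneq\mathcal E$ with $s:=rk_0(\mathcal E')\ge1$. If $1\le s\le n-1$, then $\mathcal G:=\pi_*\mathcal E'$ is a rank-$s$ $\phi_0$-invariant proper subsheaf of $\mathcal F$, and stability of $(\mathcal F,\phi_0)$ together with $\chi(\mathcal E')=\chi(\mathcal G)-\dim_k R^1\pi_*\mathcal E'\le\chi(\mathcal G)$ yields $\chi(\mathcal E')<\tfrac{\chi(\mathcal E)}{n}\,s$. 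The decisive case is $s=n$: here $\mathcal Q:=\mathcal E/\mathcal E'$ is a nonzero pure sheaf supported on a subchain of $R[r]$, and the required inequality becomes precisely $\chi(\mathcal Q)\ge1$. This is where the positivity of $\mathcal E|_R$ (i.e.\ $\mathcal E|_{R[r]_i}\cong\mathcal O(1)^{\oplus n}$) enters: each $\mathcal E|_{R[r]_i}$ is globally generated, so every quotient $\mathcal Q|_{R[r]_i}$ is globally generated with all slopes at least $1$, and after the node bookkeeping along the supporting subchain this forces $\chi(\mathcal Q)\ge1$.

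I expect the main obstacle to be exactly this last estimate: bounding $\chi(\mathcal Q)$ below for a pure quotient supported on a subchain requires the normalization/gluing computation at the chain nodes $p_2,\dots,p_r$, where each node contributes $-\,(\text{rank there})$ to $\chi$, and it is precisely the positivity $\mathcal E|_{R[r]_i}\cong\mathcal O(1)^{\oplus n}$, which gives $\deg\mathcal Q|_{R[r]_i}\ge\mathrm{rank}\,\mathcal Q|_{R[r]_i}$, that makes the degree terms dominate the node corrections. The remaining points are routine: the $\phi$-invariance of the saturated pullback follows from the projection formula and $\omega_{X_r}\cong\pi^*\omega_{X_0}$, and the vanishing $R^1\pi_*\mathcal E'=0$ for subsheaves that are nonnegative on every $R[r]_i$ follows from $\S\ref{ProPro123}$ exactly as in the identity $\chi(\mathcal F)=\chi(\mathcal E)$.
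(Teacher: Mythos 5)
Your strategy is genuinely different from the paper's: the paper passes through the normalisation, realising $((\pi_r)_*\mathcal E,(\pi_r)_*\phi)$ as the torsion-free Higgs pair induced by a generalised parabolic Higgs bundle on $\tilde X_0$ and then invoking the GPH-stability equivalence proved just before together with Sun's subsheaf correspondence, whereas you work directly over $\pi_r$, transporting invariant subsheaves by $\pi_{r*}$ in one direction and by saturating the image of $\pi_r^*$ in the other. The forward direction of your argument is essentially sound: for a saturated $\phi_0$-invariant $\mathcal G\subsetneq\pi_{r*}\mathcal E$ of rank $s$, the saturation $\mathcal E'$ of the image of $\pi_r^*\mathcal G\to\mathcal E$ is $\phi$-invariant with $rk_0(\mathcal E')=s$, one in fact has $\mathcal G=\pi_{r*}\mathcal E'$ (the quotient is a rank-zero subsheaf of the torsion-free $\pi_{r*}\mathcal E/\mathcal G$), and $R^1\pi_{r*}\mathcal E'=0$ does hold — most cleanly because $\mathcal E'|_{R[r]}$ contains the image of the trivial bundle $\pi_r^*\mathcal G|_{R[r]}$ with torsion quotient, and any quotient of $\mathcal O_{R[r]}^{\oplus N}$ on the genus-zero chain has vanishing $H^1$ — so $0$-stability of $(\mathcal E,\phi)$ gives the required strict inequality.

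The converse, however, has two genuine gaps. First, $0$-stability is not only the inequality you state for subsheaves with $rk_0\ge 1$: it also demands $\chi(\mathcal E')\le 0$ for every $\phi$-invariant subsheaf with $rk_0(\mathcal E')=0$. Such a subsheaf lies in $\ker(\mathcal E\to\mathcal E|_{\tilde X_0})=\mathcal E|_{R[r]}(-x^+-x^-)$, so the bound follows from $H^0(\mathcal E|_{R[r]}(-x^+-x^-))=0$, i.e. from the torsion-freeness of $\pi_{r*}\mathcal E$ contained in your hypothesis — but you never treat this case. Second, in the decisive case $rk_0(\mathcal E')=n$ you argue that every quotient of $\mathcal E|_{R[r]_i}$ has all slopes at least $1$, which presupposes $\mathcal E|_{R[r]_i}\cong\mathcal O(1)^{\oplus n}$; that is not what positivity means in this context (it means $\deg\mathcal E|_{R[r]_i}\ge 1$, i.e. at least one $\mathcal O(1)$ summand, so quotients can perfectly well have degree-zero summands and your slope estimate fails). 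The inequality $\chi(\mathcal Q)\ge 1$ you need is nevertheless true, by a different mechanism: $\mathcal Q=\mathcal E/\mathcal E'$ is a nonzero pure quotient of the globally generated sheaf $\mathcal E|_{R'}$ for a subcurve $R'\subseteq R[r]$ of arithmetic genus zero, hence a quotient of some $\mathcal O_{R'}^{\oplus N}$, hence $H^1(\mathcal Q)=0$ and $\chi(\mathcal Q)=h^0(\mathcal Q)\ge 1$; note this uses only standardness and global generation, not the stronger form of positivity you assume. Since you explicitly identify this step as the main obstacle and the argument you propose for it does not work as stated, the converse is not yet established in your write-up.
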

\begin{proof}
Set $\tilde E:=\mathcal E|_{\tilde X_0}$ and $\tilde F:=\mathcal E|_{R[r]}$. Since $(X_r, \mathcal E,\phi)$ is $\epsilon$-stable therefore from Lemma \ref{stan} it follows that $H^0(\tilde F(-x^+-x^-))=0$. Therefore the first map in the following sequence is injective
\begin{equation}
H^0(R[r], \tilde F)\xrightarrow{s\mapsto (s(x^+), s(x^-))} \tilde F_{x^+}\oplus \tilde F_{x^-}\xrightarrow{\theta_1\oplus \theta_2} \tilde E_{x^+}\oplus \tilde E_{x^-} ,
\end{equation}
where $\theta_1: \tilde F_{x^+}\rightarrow \tilde E_{x^+}$ and $\theta_2: \tilde F_{x^-}\rightarrow \tilde E_{x^-}$ are the gluing isomorphisms. Since $H^0(R[r],\tilde F)$ is $\phi$ invariant therefore we get a GPH $(\tilde E, (\theta_1\oplus \theta_2)(H^0(R[r], \tilde F)), \tilde \phi)$ whose induced torsion-free Higgs pair is $((\pi_r)_*\mathcal E, (\pi_r)_*\phi)$.

It is enough to show that the GPH is stable. The rest of the proof follows from the observation that for any $\tilde \phi $-invariant sub-sheaf $\tilde E'\subset \tilde E$ (on $\tilde X_0$), the sub-sheaf $E'$ of $\mathcal E$ constructed in \cite[Proposition 1.6]{33} is $\phi $-invariant. The converse also follows from similar arguments.
\end{proof}

Let $\chi$ be a positive integer such that $(\chi, n)=1$ and $\epsilon$ be a sufficiently small positive number. Let us denote by $\mathcal M^{\chi, n, \epsilon}_{HB, X_r}$ the moduli space of $\epsilon$-stable Higgs bundles $(\mathcal E, \phi)$ on the curve $X_r$ of rank $n$ and $\chi(\mathcal E)=\chi$. For the construction of the moduli space we refer to \cite{32} and \cite[Theorem B.12.]{Bh I}. Notice that if $(\mathcal E,\phi)\in \mathcal M^{\chi, n, \epsilon}_{HB, X_r}$, then from Lemma \ref{stan} it follows that it is a quasi-Gieseker-Higgs bundle (\ref{qGH}). Since $(\chi, n)=1$, one can show that the moduli space $\mathcal M^{\chi, n, \epsilon}_{HB, X_r}$ is a fine moduli space. The proof of the existence of a universal family is similar to the proof of proposition \ref{Universal}. Let us fix a universal family $(\mathcal E^{univ}, \phi^{univ})$ over $X_r\times \mathcal M^{\chi, n, \epsilon}_{HB, X_r}$. For every $i=1,\dots, r$, let us choose a smooth point $s_i$ of $R[r]_i$. Consider the map
\begin{equation}\label{map1101}
\mathcal M^{\chi, n, \epsilon}_{HB, X_r}\rightarrow \prod^r_{i=1}\{0,1,\dots, r\}
\end{equation}  
$$
\text{given by}~~[(\mathcal E, \phi)]\mapsto (dim~~H^0(\mathcal E|_{R[r]_1}\otimes \mathcal O_{R[r]_1}(-s_1) ), \cdots, dim~~H^0(\mathcal E|_{R[r]_r}\otimes \mathcal O_{R[r]_r}(-s_r) ))
$$

Notice that if $\mathcal E|_{R[r]_i}\cong \mathcal O_{R[r]_i}^{\oplus a}\oplus \mathcal O_{R[r]_i}(1)^{\oplus b}$, where $a+b=n$, then $dim~~H^0(E|_{R[r]_i}\otimes \mathcal O_{R[r]_i}(-s_i)) =b$. Since the codomain is discrete, we see that the inverse image of every element of the codomain is a disjoint union of some connected components of $\mathcal M^{\chi, n, \epsilon}_{HB, X_r}$.

\subsubsection{\textbf{Action of $Aut(X_r/X_0)$ on the moduli space $\mathcal M^{\chi, n, \epsilon}_{HB, X_r}$}}\label{Admi102} Let $(\mathcal E^{univ},\phi^{univ})$ be a universal family over $X_r\times \mathcal M^{\chi, n, \epsilon}_{HB, X_r}$. Given any $\gamma\in Aut(X_r/X_0)$ \eqref{Aut2025}, consider the pullback family $(\gamma^*\mathcal E^{univ}, \gamma^*\phi^{univ})$ over $X_r\times \mathcal M^{\chi, n, \epsilon}_{HB, X_r}$. Notice that given any Gieseker-Higgs bundle $(\mathcal E, \phi)$ the pullback $(\gamma^*\mathcal E, \gamma^*\phi)$ induces the same torsion-free Higgs pair i.e., $(\pi_r)_*\mathcal E\cong (\pi_r)_*(\gamma^*\mathcal E)$ and ${\pi_r}_*\phi={\pi_r}_*(\gamma^*\phi)$. Therefore $\chi(\mathcal E)=\chi(\gamma^*\mathcal E)=\chi(\mathcal F)$. Moreover, $(\mathcal E, \phi)$ is $\epsilon$-stable if and only if $(\gamma^*\mathcal E, \gamma^*\phi)$ is $\epsilon$-stable. Therefore, we see that $\gamma^*(\mathcal E^{univ}, \phi^{univ})$ is also a family of $\epsilon$-stable Gieseker-Higgs bundles. In other words, we have an action of $Aut(X_r/X_0)$ on the moduli space $\mathcal M^{\chi, n, \epsilon}_{HB, X_r}$. 

Given any $a_{\bullet}:=(a_1,\dots,a_r)\in \prod^r_{i=1}\{0,1,\dots, r\}$, let us denote by ${\mathcal M^{\chi, n, \epsilon, a_{\bullet}}_{HB, X_r}}$ the inverse image of $a_{\bullet}$ by the map \eqref{map1101}. Clearly the action of $Aut(X_r/X_0)$ on ${\mathcal M^{\chi, n, \epsilon}_{HB, X_r}}$ induces an action on ${\mathcal M^{\chi, n, \epsilon, a_{\bullet}}_{HB, X_r}}$. We call a tuple $a_{\bullet}:=(a_1,\dots, a_r)$ \textbf{admissible} if $a_i\geq 1$ for every $i=1,\dots, r$. From lemma \ref{Equ1011}, it follows that if $a_{\bullet}$ is admissible, the action of $Aut(X_r/X_0)$ on ${\mathcal M^{\chi, n, \epsilon, a_{\bullet}}_{HB, X_r}}$ is free. We define 

\begin{equation}\label{admi}
\mathcal M^{\chi, n, \epsilon, ad}_{VB, X_r}:=\bigcup_{a_{\bullet}~~ admissible} ~~\mathcal M^{\chi, n, \epsilon, a_{\bullet}}_{VB, X_r}
\end{equation}

and

\begin{equation}\label{admi}
\mathcal M^{\chi, n, \epsilon, ad}_{HB, X_r}:=\bigcup_{a_{\bullet}~~ admissible} ~~\mathcal M^{\chi, n, \epsilon, a_{\bullet}}_{HB, X_r}
\end{equation}
 
\begin{rema}\label{chi1}
We recall that we had fixed a positive integer $n$ which denotes the rank and an integer $d$ which denotes the degree satisfying $g.c.d(n,d)=1$. We denote by $\mathcal M_{GHB}$ the moduli of Gieseker-Higgs bundles of rank $n$ and degree $d$ on the nodal curve $X_0$. Notice that for a Gieseker-Higgs bundle $(\mathcal E,\phi)\in \mathcal M_{GHB}$, we have $\chi(\mathcal E)=\chi({\pi_r}_*\mathcal E)=d+n(1-g)$. So if $d<n(g-1)$, we see that $\chi(\mathcal E)$ is not positive. But if we choose a smooth point $x\in X_0$ and a positive integer $N$ such that $d+n\cdot N>n(g-1)$, then tensoring every Gieseker-Higgs bundle with $\mathcal O(N\cdot x)$ we get an isomorphism from the moduli space of Gieseker-Higgs bundles of rank $n$ and degree $d$ to the moduli of Gieseker-Higgs bundles of rank $n$ and degree $d+n\cdot N$. So we can safely assume that $\chi(\mathcal E)>0$. Therefore, we have a morphism $\mathcal M^{\chi, n, \epsilon, ad}_{HB, X_r}\rightarrow \mathcal M_{GHB}$, where 
\begin{enumerate}
\item $\chi=d+n(1-g)$, if $d+n(1-g)>0$  
\item $\chi=d+N\cdot n+n(1-g)$, for some sufficiently large positive integer $N$, if $d+n(1-g)\geq 0$. 
\end{enumerate}
\end{rema}

\begin{rema} \label{rema450}
Let 
\begin{equation}
\mathcal M_{GHB}\supset \partial^1 \mathcal M_{GHB}\supset \partial^2 \mathcal M_{GHB}\supset \cdots
 \end{equation}
 be the stratification of $\mathcal M_{GHB}$ given by the successive singular locus. We will show in Propostion \ref{Strat} that $\partial^l\mathcal M_{GHB}\setminus \partial^{l+1}\mathcal M_{GHB}$ is the locally closed subvariety of $\mathcal M_{GHB}$ consisting of the stable Gieseker-Higgs bundles $(X_l, \mathcal E, \phi)$. Therefore, the image of the morphism $\mathcal M^{\chi, n, \epsilon, ad}_{HB, X_r}\rightarrow \mathcal M_{GHB}$ is precisely $\partial^r\mathcal M_{GHB}\setminus \partial^{r+1}\mathcal M_{GHB}$. In fact, $\mathcal M^{\chi, n, \epsilon, ad}_{HB, X_r}\rightarrow \partial^r\mathcal M_{GHB}\setminus \partial^{r+1}\mathcal M_{GHB}$ is a principal $Aut(X_r/X_0)$-bundle. Let us denote the morphism $\mathcal M^{\chi, n, \epsilon, ad}_{HB, X_r}\rightarrow \mathcal M_{GHB}$ by $f_r$. Using the map \eqref{map1101}, we see that the stratum 
\begin{equation}\label{admi445}
\partial^r\mathcal M_{GVB}\setminus \partial^{r+1}\mathcal M_{GVB}=\bigcup_{a_{\bullet}~~\text{admissible}} \mathcal M^{a_{\bullet}}_{GVB}
\end{equation} 
and 
\begin{equation}\label{admi445}
\partial^r\mathcal M_{GHB}\setminus \partial^{r+1}\mathcal M_{GHB}=\bigcup_{a_{\bullet}~~\text{admissible}} \mathcal M^{a_{\bullet}}_{GHB}
\end{equation} 
where $\mathcal M^{a_{\bullet}}_{GVB}$ and $\mathcal M^{a_{\bullet}}_{GHB}$ are defined as in subsubsection \eqref{Admi102} using the map \eqref{map1101}. Moreover, $f_r^{-1}(\mathcal M^{a_{\bullet}}_{GVB})=\mathcal M^{\chi, n, \epsilon, a_{\bullet}}_{VB, X_r}
$ and $f_r^{-1}(\mathcal M^{a_{\bullet}}_{GHB})=\mathcal M^{\chi, n, \epsilon, a_{\bullet}}_{HB, X_r}
$.
\end{rema}

\subsubsection{\textbf{Symplectic structure on $\mathcal M^{\chi, n, \epsilon, ad}_{HB, X_r}$}} 

\begin{lema}\label{Tanagain}
\begin{enumerate}
\item The tangent space of the moduli space $\mathcal M^{\chi, n, \epsilon, ad}_{HB, X_r}$ at a point $(\mathcal E, \phi)$ is naturally isomorphic to $\mathbb H^1(X_r, \mathcal C_{\bullet}(\mathcal E, \phi))$, where $\mathcal C_{\bullet}(\mathcal E, \phi)$ is the following complex:
\begin{equation}
\mathcal End \mathcal E\xrightarrow{[\cdot, \phi]} \mathcal End \mathcal E\otimes \omega_{X_r}, 
\end{equation}
where $[\cdot, \phi](s):=[s, \phi]=s\circ \phi-\phi\circ s$.

\item The cotangent space of the moduli space $\mathcal M^{\chi, n, \epsilon, ad}_{HB, X_r}$ at a point $(\mathcal E, \phi)$ is naturally isomorphic to $\mathbb H^1(X_, \mathcal C^{\vee}_{\bullet}(\mathcal E, \phi))$, where $\mathcal C^{\vee}_{\bullet}(\mathcal E, \phi)$ is the following complex:
\begin{equation}
\mathcal End \mathcal E\xrightarrow{[\phi, \cdot]} \mathcal End \mathcal E\otimes \omega_{X_r}, 
\end{equation}
where $[\phi,\cdot ](s):=[\phi, s]=-s\circ \phi+\phi\circ s$.
\end{enumerate}
\end{lema}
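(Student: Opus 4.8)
The plan is to identify the tangent space with the space of first-order infinitesimal deformations of the Higgs bundle $(\mathcal E, \phi)$ on the \emph{fixed} curve $X_r$, and then to obtain the cotangent space from it by Serre--Grothendieck duality. Since $\mathcal M^{\chi, n, \epsilon, ad}_{HB, X_r}$ is a fine moduli space (established above), its Zariski tangent space at $(\mathcal E, \phi)$ is canonically the set of $\spec k[\epsilon]$-points reducing to $(\mathcal E, \phi)$, i.e. isomorphism classes of Higgs bundles $(\mathcal E_\epsilon, \phi_\epsilon)$ on $X_r \times \spec k[\epsilon]$ restricting to $(\mathcal E, \phi)$. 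Here the curve itself does not vary, so this is a purely sheaf-theoretic deformation problem, and the identification of the tangent space with first-order deformations is automatic from fineness.

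First I would run the \v{C}ech computation exactly as in Proposition \ref{prop6565}. Choosing an open cover $\{U_i\}$ trivialising $\mathcal E$ and $\omega_{X_r}$, a first-order deformation of $\mathcal E$ is recorded by a cochain $\{s_{ij}\} \in \prod \Gamma(U_{ij}, \mathcal End \mathcal E)$ (deforming the transition functions $A_{ij}$ to $A_{ij}(I + \epsilon s_{ij})$), and a compatible deformation of $\phi$ by $\{t_i\} \in \prod \Gamma(U_i, \mathcal End \mathcal E \otimes \omega_{X_r})$. The vector-bundle cocycle condition together with the requirement that the $\{t_i\}$ glue to a well-defined $\phi_\epsilon$ are precisely the conditions that $(\{s_{ij}\}, \{t_i\})$ be a degree-one hypercohomology cocycle for the complex $\mathcal C_\bullet(\mathcal E, \phi)$ with differential $[\cdot, \phi]$, while coboundaries correspond to gauge-equivalent (trivial) deformations. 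As noted in Remark \ref{Liou441}, none of these steps use smoothness of the curve, so they apply verbatim to the nodal, reducible curve $X_r$, yielding the isomorphism with $\mathbb H^1(X_r, \mathcal C_\bullet(\mathcal E, \phi))$.

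For the cotangent space I would invoke duality on the Gorenstein curve $X_r$, whose dualising sheaf is $\omega_{X_r} \cong \pi_r^* \omega_{X_0}$. Serre--Grothendieck duality for the two-term complex $\mathcal C_\bullet$ placed in degrees $0$ and $1$ gives $\mathbb H^1(X_r, \mathcal C_\bullet)^\vee \cong \mathbb H^{0}(X_r,\ R\mathcal Hom(\mathcal C_\bullet, \mathcal O_{X_r}) \otimes \omega_{X_r})$. The terms $\mathcal End \mathcal E$ and $\mathcal End \mathcal E \otimes \omega_{X_r}$ are self-dual up to $\omega_{X_r}$ via the trace pairing, so after tensoring by $\omega_{X_r}$ and shifting back into degrees $0,1$ the right-hand complex is again $\mathcal End \mathcal E \to \mathcal End \mathcal E \otimes \omega_{X_r}$, but with the \emph{transpose} differential. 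Using cyclicity of the trace, $\mathrm{Tr}([s, \phi]\, u) = \mathrm{Tr}(s\, [\phi, u])$, the transpose of $[\cdot, \phi]$ is $[\phi, \cdot] = -[\cdot, \phi]$, which is exactly the differential of $\mathcal C^\vee_\bullet$; the single degree shift converts the $\mathbb H^0$ on the right into $\mathbb H^1$, giving $\mathbb H^1(X_r, \mathcal C_\bullet)^\vee \cong \mathbb H^1(X_r, \mathcal C^\vee_\bullet)$ as claimed.

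The main obstacle I expect is bookkeeping rather than conceptual. On the deformation-theoretic side one must confirm that the dictionary of Proposition \ref{prop6565} survives on the reducible curve $X_r$, in particular that the complex built from $\omega_{X_r} = \pi_r^*\omega_{X_0}$ is the correct deformation complex near the nodes and at the marked points $p_1,\dots,p_{r+1}$; this is guaranteed in spirit by Remark \ref{Liou441}, but deserves a local check. On the duality side the delicate point is tracking the single degree shift and the sign of the transposed differential, so that the dual complex is genuinely $\mathcal C^\vee_\bullet$ with differential $[\phi,\cdot]$ and not merely a shift of $\mathcal C_\bullet$. The stability hypothesis itself enters only through the existence of the fine moduli space; granted that, the entire content of the lemma lies in the \v{C}ech/hypercohomology translation and in this duality identification.
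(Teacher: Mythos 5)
Your proposal is correct and follows essentially the same route as the paper: the paper's proof of this lemma is literally a one-line citation to subsection \ref{SymCur} and Remark \ref{Liou441}, i.e.\ to the \v{C}ech/hypercohomology identification of Proposition \ref{prop6565} (valid for a not-necessarily-smooth curve) together with duality of hypercohomologies, which is exactly what you spell out. The extra care you take with the Serre--Grothendieck duality bookkeeping and the sign of the transposed differential is consistent with the paper's intended argument.
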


\begin{proof}
The proof follows from subsection \ref{SymCur} and remark \ref{Liou441}. 
\end{proof}

Let $S$ be a complete discrete valuation ring. Let us fix $\mathcal X_r\rightarrow S$, a flat family of projective curves such that the generic fibre $\mathcal X_{r,\eta}$ is a smooth curve of genus $g$, the closed fibre is the nodal curve $X_r$ and the total space $\mathcal X_r$ is regular over $\spec~~ \mathbb C$. Again, the existence of such a family follows from \cite[Theorem B.2 and Corollary B.3, Appendix B]{22}. Let us denote by $\omega_{\mathcal X_r/S}$ the relative dualising sheaf. 

We can choose a line bundle $\mathcal O_{\mathcal X_r/S}(1)$, which has the following property

\begin{equation}\label{Lanebun}
\text{If}~~deg~~\mathcal O_{\mathcal X_r/S}(1)|_{\tilde X_0}=b_0~~\text{and}~~ deg~~\mathcal O_{\mathcal X_r/S}(1)|_{R_i}=b~~\text{for}~~ i=1,\dots, r, ~~\text{then}~~ b_0\neq 0, b\neq 0 ~~\text{and}~~ \frac{b}{b_0}=\frac{\epsilon}{1-r\epsilon}.
\end{equation}

To construct such a line bundle, we first choose a line bundle $\mathcal O_{X_r}(1)$ on $X_r$ satisfying \eqref{Lanebun}. Then a line bundle $\mathcal O_{\mathcal X_r/S}(1)$ can be constructed using a standard spreading-out argument (may be after replacing $S$ by an etale neighbourhood of the closed point of $S$). 
\begin{rema}
By Simpson's method \cite[Theorem 4.7]{32}, one can construct a relative moduli of Higgs bundles over the family of curves. To construct a total space and the GIT quotient, one must choose a relatively ample line bundle over the family $\mathcal X_r/S$. We choose the line bundle $\mathcal O_{\mathcal X_r/S}(1)$ \eqref{Lanebun} for this purpose. Then one can easily see from the definition that the relative moduli of Higgs bundle constructed using GIT with respect to this line bundle parametrises families of $\epsilon$-semistable Higgs bundles over the family $\mathcal X_r/S$.  
\end{rema}

\begin{prop}
There exists a family
\begin{equation}
\mathcal M^{\chi, n, \epsilon,ad}_{HB, \mathcal X_r}\rightarrow S
\end{equation}
of moduli of $\epsilon$-semistable Higgs bundles along the fibers of $\mathcal X_r/S$ with Euler characteristic $\chi$. Moreover, the morphism $\mathcal M^{\chi, n, \epsilon,ad}_{HB, \mathcal X_r}\rightarrow S$ is smooth. 
\end{prop}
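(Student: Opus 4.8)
The plan is to realise the family as a relative GIT quotient and then prove smoothness over $S$ by the infinitesimal lifting criterion, reducing the latter to an unobstructedness statement for Higgs pairs on the fibres. For existence I would invoke Simpson's relative construction \cite[Theorem 4.7]{32} applied to $\mathcal X_r\to S$ with the relatively ample line bundle $\mathcal O_{\mathcal X_r/S}(1)$ normalised as in \eqref{Lanebun}. With this polarisation the numerical GIT inequality on each geometric fibre becomes exactly the $\epsilon$-stability inequality, so along the special fibre $X_r$ the GIT-(semi)stable points are precisely the $\epsilon$-(semi)stable Higgs bundles, while along the smooth generic fibre (no rational components, $rk_{\epsilon}(\mathcal F)=\mathrm{rank}\,\mathcal F$) $\epsilon$-stability is ordinary stability. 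Since $(\chi,n)=1$, semistability coincides with stability, the quotient is geometric, and a universal family exists by the argument of Proposition \ref{Universal}; the space $\mathcal M^{\chi,n,\epsilon,ad}_{HB,\mathcal X_r}$ is the open locus lying over the admissible strata, which is $S$-separated and of finite presentation.

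To show that $f:\mathcal M^{\chi,n,\epsilon,ad}_{HB,\mathcal X_r}\to S$ is smooth I would verify formal smoothness: as $f$ is of finite presentation, it suffices to prove that deformations of a fibrewise Higgs bundle $(\mathcal E,\phi)$ along square-zero thickenings in $S$ are unobstructed, the underlying curve deformation being prescribed by the family $\mathcal X_r/S$. By Kodaira--Spencer theory for Higgs pairs (Lemma \ref{Tanagain} and Remark \ref{Liou441}) these deformations are governed by the complex $\mathcal C_{\bullet}:\mathcal End\,\mathcal E\xrightarrow{[\cdot,\phi]}\mathcal End\,\mathcal E\otimes\omega_{X_s}$, with tangent space $\mathbb H^1(\mathcal C_{\bullet})$ and obstruction space $\mathbb H^2(\mathcal C_{\bullet})$. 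Here Serre duality for the nodal fibre together with the identities $\pi_r^*\omega_{X_0}\cong\omega_{X_r}$, $(\pi_r)_*\mathcal O_{X_r}\cong\mathcal O_{X_0}$ and $R^i(\pi_r)_*\mathcal O_{X_r}=0$ from \S\ref{ProPro123} let me compute the hypercohomology on $X_r$ exactly as on a smooth curve.

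The decisive step is that the obstruction class vanishes even though $\mathbb H^2(\mathcal C_{\bullet})$ itself need not vanish. I would split $\mathcal End\,\mathcal E\cong\mathcal O_{X_s}\oplus\mathcal End_0\,\mathcal E$ in characteristic zero and note that $[\cdot,\phi]$ is trace-free, so $\mathbb H^2(\mathcal C_{\bullet})$ decomposes into a trace-free summand and a determinant summand $H^1(X_s,\omega_{X_s})\cong\mathbb C$. By Serre duality the trace-free summand is dual to the space of trace-free endomorphisms commuting with $\phi$, which vanishes by stability, so the trace-free part of the obstruction is zero. The determinant part coincides with the obstruction to deforming the rank-one Higgs pair $(\det\mathcal E,\mathrm{tr}\,\phi)$, and the relative moduli of such pairs over $\mathcal X_r/S$ is smooth (a relative extension of $\mathrm{Pic}_{\mathcal X_r/S}$ by the vector bundle $H^0(\omega_{\mathcal X_r/S})$), hence that obstruction also vanishes. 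Therefore every deformation lifts, $f$ is formally smooth, and consequently smooth (in particular flat, with smooth fibres of the expected dimension).

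The main obstacle I anticipate is the bookkeeping of the obstruction theory on the singular special fibre $X_r$: one must confirm that Serre duality and the trace/determinant splitting behave on the nodal Gieseker curve, via its dualising sheaf $\omega_{X_r}$, exactly as on a smooth curve, and that the lifting problem over $S$ — which simultaneously smooths the nodes — remains controlled by $\mathbb H^2(X_s,\mathcal C_{\bullet})$ with the curve deformation fixed by $\mathcal X_r/S$, so that the determinant-unobstructedness argument genuinely applies over the whole base and not merely on the closed fibre.
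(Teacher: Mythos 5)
Your proposal is correct in substance, and the existence part (Simpson's relative construction with the polarisation \eqref{Lanebun}, coprimality forcing stable $=$ semistable, fine moduli as in Proposition \ref{Universal}) is exactly what the paper does. Where you diverge is the smoothness step. The paper does not attempt to show that the obstruction \emph{class} vanishes; it only computes the \emph{dimension} of the obstruction space: since $\epsilon$-stability implies $0$-stability, \cite[Proposition 5.3]{3} gives $\dim\mathbb H^2(\mathcal C_{\bullet})=1$ at every point, so by constancy of the Euler characteristic of $\mathcal C_{\bullet}$ (and $\mathbb H^0\cong\mathbb C$ by simplicity) the relative tangent space $\mathbb H^1(\mathcal C_{\bullet})$ has constant dimension across all fibres, which combined with flatness and smoothness of the generic fibre yields smoothness of the morphism. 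You instead prove formal smoothness directly: split $\mathcal End\,\mathcal E=\mathcal O\oplus\mathcal End_0\,\mathcal E$, kill the trace-free summand of $\mathbb H^2$ by Serre duality and simplicity, and dispose of the remaining one-dimensional determinant summand by functoriality of obstruction classes under $(\mathcal E,\phi)\mapsto(\det\mathcal E,\mathrm{tr}\,\phi)$ together with smoothness of the rank-one relative moduli. Your route is more informative (it identifies \emph{why} the surviving $\mathbb H^2\cong\mathbb C$ is harmless) but it leans on the unproved compatibility that the trace of the obstruction class is the obstruction class of the determinant pair on the nodal fibre; the paper's dimension count sidesteps that entirely at the cost of being less explicit about unobstructedness. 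Both arguments are sound; if you keep yours, you should either cite or verify the trace-of-obstruction compatibility for Higgs pairs on the Gieseker curve $X_r$.
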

\begin{proof}
We refer to \cite[Theorem 4.7]{32} for the construction of the family. The space of first order infinitesimal deformations of a Higgs bundle $(\mathcal E, \phi)$ is isomorphic to $\mathbb H^1(\mathcal C_{\bullet}(\mathcal E, \phi))$ and the space of obstructions to extend the Higgs bundle over a small thickenning is isomorphic to $\mathbb H^2(\mathcal C_{\bullet}(\mathcal E, \phi))$. Since $\epsilon $-stability implies $0$-stability, from \cite[Proposition 5.3]{3}, we have $dim~~\mathbb H^2(\mathcal C_{\bullet}(\mathcal E, \phi))=1$. Therefore the dimension of the relative tangent space i.e., $dim~~\mathbb H^1(\mathcal C_{\bullet})$ is constant and hence the morphism $\mathcal M^{\chi, n, \epsilon,ad}_{HB, \mathcal X_r}\rightarrow S$ is smooth.
\end{proof}

\begin{thm}\label{equisym}
There is a natural $Aut(X_r/X_0)$-equivariant symplectic form on $\mathcal M^{\chi, n, \epsilon, ad}_{HB, X_r}$. 
\end{thm}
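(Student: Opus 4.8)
The plan is to transport the construction of \S\ref{SymCur} to the (singular, but Gorenstein) Gieseker curve $X_r$, which is legitimate by remark \ref{Liou441}, and then to check that the resulting form is invariant under the pullback action of $Aut(X_r/X_0)$ described in \S\ref{Admi102}. Concretely, I would first use the identification of the tangent space at $(\mathcal E,\phi)$ with $\mathbb H^1(X_r,\mathcal C_{\bullet})$ and of the cotangent space with $\mathbb H^1(X_r,\mathcal C^{\vee}_{\bullet})$ from Lemma \ref{Tanagain}. The vertical and horizontal arrows of the sign-twisted diagram of complexes $\mathcal C^{\vee}_{\bullet}\to \mathcal C_{\bullet}$ analogous to \eqref{Pair} are isomorphisms, so they induce an isomorphism $(\sigma')^{\flat}\colon \mathbb H^1(\mathcal C^{\vee}_{\bullet})\xrightarrow{\cong}\mathbb H^1(\mathcal C_{\bullet})$; composing with the duality $\mathbb H^1(\mathcal C^{\vee}_{\bullet})\cong \mathbb H^1(\mathcal C_{\bullet})^{\vee}$ implicit in Lemma \ref{Tanagain} yields a non-degenerate pairing $\omega$ on the tangent space, and the choice of signs makes it skew-symmetric, exactly as in \eqref{Pyaar2}. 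Carrying out this construction in families over the base $\mathcal M^{\chi,n,\epsilon,ad}_{HB,X_r}$ with the universal pair $(\mathcal E^{univ},\phi^{univ})$ and relative hypercohomology produces a global algebraic $2$-form.

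Next I would verify closedness. On the dense open locus where the underlying bundle $\mathcal E$ is itself $\epsilon$-stable, Serre duality $H^1(X_r,\mathcal End\,\mathcal E)^{\vee}\cong \mathrm{Hom}(\mathcal E,\mathcal E\otimes\omega_{X_r})$ (using $\pi_r^*\omega_{X_0}\cong\omega_{X_r}$, \S\ref{ProPro123}) identifies this locus with the total space of the cotangent bundle $\Omega_{\mathcal M^{\chi,n,\epsilon,ad}_{VB,X_r}}$, on which $\omega=-d\lambda$ for the tautological Liouville form, exactly as in \S\ref{SymCur}; in particular $d\omega=0$ there. Since $d\omega$ is a global section of a locally free sheaf vanishing on a dense open subset, it vanishes identically, so $\omega$ is a genuine symplectic form.

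Finally, for the equivariance — the genuinely new point — I would trace the naturality of all ingredients under pullback by $\gamma\in Aut(X_r/X_0)$. The differential of the action $(\mathcal E,\phi)\mapsto(\gamma^*\mathcal E,\gamma^*\phi)$ is the pullback $\gamma^*\colon \mathbb H^1(\mathcal C_{\bullet}(\mathcal E,\phi))\to \mathbb H^1(\mathcal C_{\bullet}(\gamma^*\mathcal E,\gamma^*\phi))$, and the trace pairing on $\mathcal End\,\mathcal E$ together with the cup product are manifestly compatible with $\gamma^*$. The one point requiring care — and the main obstacle — is that the pairing takes values in $H^1(X_r,\omega_{X_r})\cong\mathbb C$, on which $\gamma$ acts a priori, so I must show this action is trivial. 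Since $\gamma$ commutes with the contraction $\pi_r\colon X_r\to X_0$ and $\omega_{X_r}\cong \pi_r^*\omega_{X_0}$, the projection formula together with $(\pi_r)_*\mathcal O_{X_r}\cong\mathcal O_{X_0}$ and $R^i(\pi_r)_*\mathcal O_{X_r}=0$ for $i>0$ (\S\ref{ProPro123}) gives a canonical identification $H^1(X_r,\omega_{X_r})\cong H^1(X_0,\omega_{X_0})$ under which $\gamma^*$ becomes $(\mathrm{id}_{X_0})^*=\mathrm{id}$, because $\pi_r\circ\gamma=\pi_r$. Hence $\gamma$ fixes the target $\mathbb C$ of the pairing, and combined with the naturality of the remaining operations this yields $\gamma^*\omega=\omega$, so $\omega$ is $Aut(X_r/X_0)$-equivariant.
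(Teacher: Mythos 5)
Your construction of the pairing via the sign-twisted morphism of complexes $\mathcal C^{\vee}_{\bullet}\to \mathcal C_{\bullet}$, and your verification of skew-symmetry, non-degeneracy and $Aut(X_r/X_0)$-equivariance, all match the paper's proof. In fact your equivariance argument is more explicit than the paper's, which only records the commutative diagram of complexes induced by $t^*[\phi,s]=[t^*\phi,t^*s]$ and does not separately address the a priori possible action of $\gamma$ on the target $H^1(X_r,\omega_{X_r})\cong\mathbb C$ of the trace pairing; your observation that $\pi_r\circ\gamma=\pi_r$ together with $(\pi_r)_*\mathcal O_{X_r}\cong\mathcal O_{X_0}$ and $R^i(\pi_r)_*\mathcal O_{X_r}=0$ forces this action to be trivial is a worthwhile point to make explicit.

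The one place where you genuinely diverge is closedness, and there your argument has a gap. You deduce $d\omega=0$ from its vanishing on ``the dense open locus where $\mathcal E$ is itself $\epsilon$-stable'', identified with $\Omega_{\mathcal M^{\chi,n,\epsilon,ad}_{VB,X_r}}$. Openness of this locus is clear, but density is not: since $\mathcal M^{\chi,n,\epsilon,ad}_{HB,X_r}$ is smooth, its irreducible components are its connected components, so density amounts to the claim that every component of every $\mathcal M^{\chi,n,\epsilon,a_{\bullet}}_{HB,X_r}$ contains a Higgs bundle whose underlying bundle is already $\epsilon$-stable; neither the connectedness of these pieces nor this non-emptiness is established anywhere (for a smooth curve the analogous density ultimately rests on the known irreducibility of the Higgs moduli space). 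The paper sidesteps this by choosing a different dense open set: it constructs the smooth family $\mathcal M^{\chi,n,\epsilon,ad}_{HB,\mathcal X_r}\to S$ whose generic fibre is the Higgs moduli space of a smooth curve, where closedness is already known; the generic fibre is automatically dense in the total space of a flat family over a discrete valuation ring, so $d\omega$ vanishes on the whole family and in particular on the closed fibre $\mathcal M^{\chi,n,\epsilon,ad}_{HB,X_r}$. To keep your intrinsic argument you would need to supply the density claim, e.g., by showing that each connected component of $\mathcal M^{\chi,n,\epsilon,a_{\bullet}}_{HB,X_r}$ meets $\Omega_{\mathcal M^{\chi,n,\epsilon,a_{\bullet}}_{VB,X_r}}$.
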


\begin{proof}
As before ( \eqref{Pair} and \eqref{Pair2}), the following morphism of complexes induces a bi-linear pairing on the tangent space.

\begin{equation}
\mathcal C^{\vee}_{\bullet}(\mathcal E, \phi)\rightarrow \mathcal C_{\bullet}(\mathcal E, \phi)
\end{equation}

Skew-symmetricity and non-degeneracy of the above pairing follow from the description of the morphism of complexes. The closed-ness of the corresponding $2$-form follows from the fact that $\mathcal M^{\chi, n, \epsilon, ad}_{HB, X_r}$ is the closed fibre of the smooth family $\mathcal M^{\chi, n, \epsilon, ad}_{HB, \mathcal X_r}\rightarrow S$ and the fact that the above pairing is closed on the generic fibre.

Let $t\in Aut(X_r/X_0)$ be an automorphism $t:X_r\rightarrow X_r$. Then we have a commutative diagram of complexes
\begin{equation}
\begin{tikzcd}
\mathcal C^{\vee}_{\bullet}(\mathcal E, \phi)\arrow{r}\arrow{d}{t^*} & \mathcal C_{\bullet}(\mathcal E, \phi)\arrow{d}{t^*}\\
\mathcal C^{\vee}_{\bullet}(t^*\mathcal E, t^*\phi)\arrow{r} & \mathcal C_{\bullet}(t^*\mathcal E, t^*\phi)
\end{tikzcd} 
\end{equation}
The commutativity follows from the fact that $t^*[\phi, s]=[t^*\phi, t^*s]$. 

It induces the following commutative diagram of hypercohomologies
\begin{equation}
\begin{tikzcd}
\mathbb H^1(\mathcal C^{\vee}_{\bullet}(\mathcal E, \phi))\arrow{r}\arrow{d}{t^*} & \mathbb H^1(\mathcal C_{\bullet}(\mathcal E, \phi))\arrow{d}{t^*}\\
\mathbb H^1(\mathcal C^{\vee}_{\bullet}(t^*\mathcal E, t^*\phi))\arrow{r} & \mathbb H^1(\mathcal C_{\bullet}(t^*\mathcal E, t^*\phi))
\end{tikzcd} 
\end{equation}
Therefore, the symplectic form on $\mathcal M^{\chi, n, \epsilon, ad}_{HB, X_r}$ is $Aut(X_r/X_0)$-equivariant.
\end{proof}
\begin{coro}\label{TQ1}
The morphism $f_r: \mathcal M^{\chi, n, \epsilon, ad}_{HB, X_r}\rightarrow \mathcal M_{GHB}$ (remark \ref{rema450}) is a Poisson morphism.
\end{coro}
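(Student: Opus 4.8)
The plan is to reduce the statement to a comparison of two Poisson structures that are, by construction, induced by one and the same cohomological pairing, after first cutting everything down to a single smooth stratum. First I would use remark \ref{rema450}: the map $f_r$ is a principal $Aut(X_r/X_0)$-bundle onto the locally closed stratum $\partial^r\mathcal M_{GHB}\setminus\partial^{r+1}\mathcal M_{GHB}$, which by Examples \ref{Ex2} and \ref{exam12} is a smooth Poisson subvariety of $\mathcal M_{GHB}$ carrying the restricted structure $\sigma|_{\partial^r\mathcal M_{GHB}\setminus\partial^{r+1}\mathcal M_{GHB}}$ (and Poisson ranks are preserved under this restriction). Since a composite $X\to Y\hookrightarrow Z$ with $Y\hookrightarrow Z$ a Poisson embedding is Poisson iff $X\to Y$ is, it suffices to prove that the induced surjection onto this stratum is a Poisson morphism; now both source and target are smooth.

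Next I would identify the differential. At a point $(\mathcal E,\phi)$ the tangent space of $\mathcal M^{\chi,n,\epsilon,ad}_{HB,X_r}$ is $\mathbb H^1(X_r,\mathcal C_\bullet)$ (Lemma \ref{Tanagain}), while the relative log-tangent space of $\mathcal M_{GHB}$ at $f_r(\mathcal E,\phi)$ is also $\mathbb H^1(X_r,\mathcal C_\bullet)$ (Theorem \ref{Tann}), the point being that by Proposition \ref{RelTan} and Lemma \ref{Log101} relative log-deformations keep the Gieseker curve rigid, so both cohomology groups classify first-order deformations of $(\mathcal E,\phi)$ on the fixed curve $X_r$. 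Under these identifications the map induced by $f_r$ is the natural identity. Its ordinary kernel is the orbit direction $H^0(X_r,T_{X_r})\subseteq\mathbb H^1(\mathcal C_\bullet)$ (Lemma \ref{InfAct}, remark \ref{Free}); in the target these same $r$ directions are the log-normal directions to the stratum, so passing from the log-tangent bundle to the ordinary tangent of the stratum is exactly quotienting by them, which matches the vertical directions of the torus bundle.

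Then I would match the two forms. Because $\pi_r^*\omega_{X_0}\cong\omega_{X_r}$ (\S\ref{ProPro123}(1)), the complex $\mathcal C_\bullet$ on $X_r$ used in Lemma \ref{Tanagain} is literally the complex appearing in Theorem \ref{Main1}, and both the $Aut(X_r/X_0)$-equivariant symplectic form of Theorem \ref{equisym} and the log-symplectic form of Theorem \ref{Main1} are the single skew pairing \eqref{Pair} on $\mathbb H^1(\mathcal C_\bullet)$. Hence $df_r$ intertwines the source symplectic form with the target log-symplectic form; equivalently the source Poisson bivector pushes forward to the log-Poisson bivector of $\mathcal M_{GHB}$, and then, via $\wedge^2 T(-\log\partial)\to\wedge^2 T$ followed by the quotient by the orbit directions, to $\sigma|_{\partial^r\mathcal M_{GHB}\setminus\partial^{r+1}\mathcal M_{GHB}}$. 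This is precisely the commuting anchor square of Definition \ref{PoissonMorphism}, giving the claim.

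The main obstacle I expect is the reconciliation in the last step between the non-degenerate source and the degenerate target: one must be sure that the structure descending from the quotient is genuinely the log-symplectic one and not a twist of it. This is exactly where the equivariance in Theorem \ref{equisym} is indispensable, for it guarantees that the Poisson bracket of two $Aut(X_r/X_0)$-invariant functions is again invariant and so descends to the stratum, while the coincidence of the defining pairings \eqref{Pair} pins down the descended bracket as the one induced by the log-symplectic form. For the subsequent rank computation one also checks that the $r$ orbit directions are isotropic for the pairing, forcing the rank to drop by $2r$, but only the equality of bivectors is needed here.
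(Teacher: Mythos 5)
Your proposal is correct and follows essentially the same route as the paper: identify $f_r^*\Omega_{\mathcal M_{GHB}}(\log\partial\mathcal M_{GHB})$ and $f_r^*T_{\mathcal M_{GHB}}(-\log\partial\mathcal M_{GHB})$ with the cotangent and tangent bundles of $\mathcal M^{\chi,n,\epsilon,ad}_{HB,X_r}$ via Theorem \ref{Tann} and Lemma \ref{Tanagain}, and observe that both anchor maps are the single explicit pairing \eqref{Pair} on $\mathbb H^1(\mathcal C_\bullet)$. Your additional scaffolding (first restricting to the smooth stratum and then descending along the torus quotient) is a harmless, and arguably more careful, unpacking of what the paper's two-line argument leaves implicit.
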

\begin{proof}
Using the descriptions (Theorem \ref{Tann} and Lemma \ref{Tanagain}) of the vector bundles in the following equations, we see that 
\begin{equation}
f^*_r\Omega_{\mathcal M_{GHB}}(log~~\partial \mathcal M_{GHB})\cong \Omega_{\mathcal M^{\chi, n, \epsilon, ad}_{HB, X_r}}, \hspace{.3cm} \text{and}~~f^*_rT_{\mathcal M_{GHB}}(-log~~\partial \mathcal M_{GHB})\cong T_{\mathcal M^{\chi, n, \epsilon, ad}_{HB, X_r}}
\end{equation}

The explicit descriptions given in \eqref{Pair}, \eqref{Pair2} and Theorem \ref{equisym}) of the morphisms $f^*_r\Omega_{\mathcal M_{GHB}}(log~~\partial \mathcal M_{GHB})\rightarrow f^*_r T_{\mathcal M_{GHB}}(log~~\partial \mathcal M_{GHB})$ and 
$\Omega_{\mathcal M^{\chi, n, \epsilon, ad}_{HB, X_r}}\rightarrow T_{\mathcal M^{\chi, n, \epsilon, ad}_{HB, X_r}}$ induced by the Poisson bi-vectors on $\mathcal M_{GHB}$ and $\mathcal M^{\chi, n, \epsilon, ad}_{HB, X_r}$, respectively clearly match at every point. Therefore the corollary follows.

\end{proof}

\section{\textbf{Stratification of $\mathcal M_{GHB}$ by Poisson ranks}}

Let us recall that we had chosen a degeneration of a smooth projective curve i.e., a family of curves $\mathcal X$ over a discrete valuation ring $S$ (\ref{DegeCourbes}). Then one can construct a family of varieties $\mathcal M_{GHB, S}$ over $S$ such that the fibre over the generic point is the moduli of Higgs bundles over the generic curve and the fibre over the closed point is the moduli of Gieseker-Higgs bundles on the nodal curve. Moreover, the closed fibre is a normal-crossing divisor in $\mathcal M_{GHB, S}$. It has a natural stratification given by its successive singular loci
\begin{equation}\label{strat2055}
\mathcal M_{GHB}\supset \partial^1\mathcal M_{GHB}\supset \dots\supset \partial^n\mathcal M_{GHB} \supset \partial^{n+1}\mathcal M_{GHB}:=\emptyset,
\end{equation}

By \cite[Lemma 3.1]{5}, the stratification has the following description.
\[
\text{ for every }~~0\leq r\leq n, ~~\partial^r\mathcal M_{GHB}=\{x\in \mathcal M_{GHB}~~| ~~\text{cardinality of the set }~~q^{-1}(x)\geq r+1\},
\]
where $q$ denotes the normalisation $\widetilde{\mathcal M}_{GHB}\rightarrow \mathcal M_{GHB}$.

\begin{prop}\label{Strat}
\begin{enumerate}
\item For every integer $0 \leq r\leq n$, $\partial^r\mathcal M_{GHB}$ is a closed Poisson sub-variety $\mathcal M_{GHB}$. The closed points of $\partial^r \mathcal M_{GHB}$ correspond to the equivalence classes of stable Gieseker-Higgs bundles $(X_k, \mathcal E, \phi)$, where $n\geq k\geq r$. 

\item the $r$-th stratum $\partial^{r,o}\mathcal M_{GHB}:=\partial^r\mathcal M_{GHB}\setminus \partial^{r+1}\mathcal M_{GHB}$ is a smooth locally-closed Poisson sub-scheme of $\mathcal M_{GHB}$. 

\item the most singular locus $\partial^n\mathcal M_{GHB}$ is a smooth Poisson variety of dimension $2n^2(g-1)+2-n$, whose closed points correspond to the equivalence classes of stable Gieseker-Higgs bundles $(X_n, \mathcal E,\phi)$ of rank $n$ and degree $d$. 
\end{enumerate}
\end{prop}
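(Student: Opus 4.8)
The plan is to deduce the three statements from the Poisson machinery of \S2.3 together with the modular description of the strata recorded in Remark \ref{rema450} and the local normal-crossing structure obtained in Proposition \ref{logs}. Throughout I use that $\mathcal M_{GHB}$ carries the Poisson bivector $\sigma$ dual to the log-symplectic form produced in Theorem \ref{Main1}.

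For part (1) the crucial input is Example \ref{Ex2}, which says the singular locus of a Poisson scheme is again a Poisson subscheme. I would argue by induction on $r$: the locus $\partial^1\mathcal M_{GHB}$ is the singular locus of $\mathcal M_{GHB}$, and in general $\partial^{r+1}\mathcal M_{GHB}$ is by definition the singular locus of $\partial^r\mathcal M_{GHB}$, so applying Example \ref{Ex2} to the Poisson subscheme $\partial^r\mathcal M_{GHB}$ shows $\partial^{r+1}\mathcal M_{GHB}$ is a Poisson subscheme of $\partial^r\mathcal M_{GHB}$, hence of $\mathcal M_{GHB}$. For the modular description of the closed points I would invoke the combinatorial characterisation $\partial^r\mathcal M_{GHB}=\{x:\ |q^{-1}(x)|\geq r+1\}$ of \cite[Lemma 3.1]{5}. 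By Proposition \ref{logs}, at a point represented by a stable Gieseker--Higgs bundle on a curve $X_k$ of chain length $k$ the divisor $\mathcal M_{GHB}$ has the \'etale-local equation $t_1\cdots t_{k+1}=0$, so exactly $k+1$ smooth branches pass through it and $|q^{-1}(x)|=k+1$. Hence $x\in\partial^r\mathcal M_{GHB}$ if and only if $k\geq r$; the upper bound $k\leq n$, recorded as $\partial^{n+1}\mathcal M_{GHB}=\emptyset$ in \eqref{strat2055}, reflects that a Gieseker curve supporting a rank-$n$ stable bundle has chain length at most $n$, consistent with the standardness of Lemma \ref{stan}.

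For part (2) the stratum $\partial^{r,o}\mathcal M_{GHB}=\partial^r\mathcal M_{GHB}\setminus\partial^{r+1}\mathcal M_{GHB}$ is locally closed as the difference of two closed subschemes, and it is Poisson because it is an open subscheme of the Poisson subscheme $\partial^r\mathcal M_{GHB}$ (open embeddings carry a unique compatible Poisson structure, \S2.3). For smoothness I would use Remark \ref{rema450}: the morphism $f_r:\mathcal M^{\chi,n,\epsilon,ad}_{HB,X_r}\to\mathcal M_{GHB}$ exhibits $\partial^{r,o}\mathcal M_{GHB}$ as the quotient of the smooth variety $\mathcal M^{\chi,n,\epsilon,ad}_{HB,X_r}$ by the free action of the torus $Aut(X_r/X_0)\cong(\mathbb G_m)^{r}$, freeness being Lemma \ref{Equ1011}. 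Since this quotient is a principal bundle and the total space is smooth (it is the smooth closed fibre of the smooth family $\mathcal M^{\chi,n,\epsilon,ad}_{HB,\mathcal X_r}\to S$), the base $\partial^{r,o}\mathcal M_{GHB}$ is smooth. For part (3), since $\partial^{n+1}\mathcal M_{GHB}=\emptyset$ by \eqref{strat2055}, the most singular locus coincides with the top open stratum $\partial^{n,o}\mathcal M_{GHB}$; smoothness and the Poisson property are thus the case $r=n$ of part (2), and by part (1) its closed points are exactly the classes of stable Gieseker--Higgs bundles $(X_n,\mathcal E,\phi)$ of rank $n$ and degree $d$. For the dimension I would use flatness of $\mathcal M_{GHB,S}\to S$, so the closed fibre has the same dimension as the generic fibre $\mathcal M_{HB}$, namely $2(n^2(g-1)+1)$; in the \'etale-local model $t_1\cdots t_m=0$ the locus $\partial^r\mathcal M_{GHB}$ where at least $r+1$ branches meet has codimension $r$, giving $\dim\partial^n\mathcal M_{GHB}=2(n^2(g-1)+1)-n=2n^2(g-1)+2-n$ (equivalently the torus count $\dim\mathcal M^{\chi,n,\epsilon,ad}_{HB,X_n}-n$ with $\dim\mathcal M^{\chi,n,\epsilon,ad}_{HB,X_n}=2(n^2(g-1)+1)$).

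The step I expect to be the main obstacle is the identification in part (1) of the abstract stratification by successive singular loci with the modular stratification by chain length: one must match the \'etale-local normal-crossing coordinates $t_1\cdots t_{k+1}$ of Proposition \ref{logs} with the number of nodes of the underlying Gieseker curve $X_k$, and verify that these $k+1$ branches are exactly the ones separated by the normalisation $q$, so that \cite[Lemma 3.1]{5} applies verbatim. Once this dictionary is established, the remaining assertions follow formally from Example \ref{Ex2}, the torus-quotient presentation of Remark \ref{rema450}, and flatness.
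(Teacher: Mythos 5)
Your proposal is correct in substance and its central step is the same as the paper's: identifying the abstract stratification by successive singular loci with the modular stratification by chain length, by matching the \'etale-local branches $t_1,\dots,t_{k+1}$ of Proposition \ref{logs} with the $k+1$ nodes of $X_k$ and then applying \cite[Lemma 3.1]{5}. The differences are in packaging. For the Poisson/dimension assertions the paper cites \cite[Corollary 2.4]{29} once, which simultaneously gives closedness, the Poisson property, the codimension count, and (via the normal-crossing local model) smoothness of the open strata; you instead run Example \ref{Ex2} inductively, which is equally valid and stays inside the paper's own preliminaries. For the branch--node dictionary the paper argues globally: it shows that $V({Fitt}^1\,\Omega_{\mathcal X^{univ}/\mathcal M_{GHB}})$ is smooth, hence is the normalisation $\widetilde{\mathcal M}_{GHB}$, so that the fibre of $q$ over $(X_k,\mathcal E,\phi)$ is literally the set of nodes of $X_k$; your version is the \'etale-local shadow of this and proves the same thing, though the paper's Fitting-ideal formulation has the added benefit of giving a modular description of $\widetilde{\mathcal M}_{GHB}$ that is reused later.

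One point to tighten: your smoothness argument for $\partial^{r,o}\mathcal M_{GHB}$ in part (2) leans on Remark \ref{rema450}, i.e.\ on $f_r$ being a principal $A_r$-bundle onto exactly this stratum. That remark explicitly defers its justification to Proposition \ref{Strat} (the identification of the image of $f_r$ with $\partial^{r,o}\mathcal M_{GHB}$ uses part (1), and the principal-bundle structure is asserted rather than proved there), so invoking it here is at least circular in presentation. This is easily repaired and costs you nothing: once you have the local model of part (1), at a point of $\partial^{r,o}\mathcal M_{GHB}$ exactly $r+1$ branches meet, so \'etale-locally $\partial^r\mathcal M_{GHB}=V(t_1,\dots,t_{r+1})$ inside the smooth ambient $\mathcal M_{GHB,S}$ and $\partial^{r+1}\mathcal M_{GHB}$ is empty nearby; smoothness and the codimension-$r$ count (hence the dimension in part (3)) are immediate, with no appeal to the torus quotient. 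With that substitution your argument is complete and matches the paper's.
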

\begin{proof}
From \cite[Corollary 2.4]{29}, it follows that for every $r$, the variety $\partial^i\mathcal M_{GHB}$ is a closed Poisson subvariety of $\mathcal M_{GHB}$ of dimension $2(n^2(g-1)+1)-r$. In particular, $\partial^{r,o}\mathcal M_{GHB}$ is a smooth locally-closed Poisson subvariety.

There is a universal curve $\mathcal X^{univ}$ over $\mathcal M_{GHB}$, which is the restriction of the universal curve $\mathcal X^{univ}_S$. We define 
\begin{equation}
D:=V(Fitt^1 \Omega_{\mathcal X^{univ}/\mathcal M_{GHB}}),
\end{equation} 
where $Fitt^1 \Omega_{\mathcal X^{univ}/\mathcal M_{GHB}}$ denotes the first Fitting ideal of $\Omega_{\mathcal X^{univ}/\mathcal M_{GHB}}$.

\begin{claim}
$D$ is the normalisation of $\mathcal M_{GHB}$. 
\end{claim}

\begin{claimproof}
Since the fibres of $D\rightarrow \mathcal M_{GHB}$ are the singular locus of the morphism $\mathcal X^{univ}\rightarrow \mathcal M_{GHB}$, we see that the earlier map is finite, birational and surjective. So if we show that $D$ is smooth, then it follows that it is the normalisation of $\mathcal M_{GHB}$. 

The question is local. So, let us concentrate around a point $p$ representing an equivalence class of Gieseker-Higgs bundle $(X_r, \mathcal E, \phi)\in \mathcal M_{GHB}$, as in the proof of Proposition \ref{logs}. The Henselian local ring of $\mathcal M_{GHB}$ at $p$ is $A_0$ (see proposition \ref{logs}), whose local components $\{D_i\}^{r+1}_{i=1}$ are given by 

$$\spec A_{0,i}:=\spec \frac{A_0}{(t_i)} ~~\text{for}~~ i=1,\dots, r+1.$$

 Moreover, if $p_i$ denotes the $i$-th node of $X_r$, then 
 
 $$\mathcal O_{\mathcal X^{univ}_{A_0}, p_i}\cong \frac{A_0[x, y]}{xy-t_i}~~\text{ for every}~~ i=1,\dots, r+1.$$ 
 
Therefore, using the description of $A_0$ ((3) in the proof of Proposition \ref{logs}) we have 

$$\mathcal O_{D_i, p_i}\cong A_{0,i}, ~~\text{and}~~ D_i ~~\text{is smooth for every} ~~i=1,\dots, r+1.$$ 

This proves that the normalisation is isomorphic to the vanishing locus of the first Fitting ideal. 
\end{claimproof}

Since $\widetilde{\mathcal M}_{GHB}$ is the vanishing locus of the first Fitting ideal $Fitt^1 (\Omega_{\mathcal X^{univ}/\mathcal M_{GHB}})$, the fibre of the normalisation $\widetilde{\mathcal M}_{GHB}\rightarrow \mathcal M_{GHB}$ over a point $(X_r, \mathcal E,\phi)$ is $\{(X_r, \mathcal E,\phi, x)| x ~~\text{is a node of the curve}~~ X_r\}$.
\end{proof}

Since $\partial^{r,o}\mathcal M_{GHB}$ is a smooth locally closed Poisson sub-scheme, the Poisson bi-vector $\sigma$ induces a morphism $\sigma^{\flat}_r: \Omega_{\partial^{r,o}\mathcal M_{GHB}}\rightarrow T_{\partial^{r,o}\mathcal M_{GHB}}$. To compute the Poisson rank of $\sigma$ at a point of this stratum it is enough to compute the rank of the morphism $\sigma^{\flat}_r$ (see Example \ref{exam12}). Before computing the Poisson ranks, we need a preliminary lemma \ref{rank}.
Let us denote the torus $Aut(X_r/X_0)$ by $A_r$, for convenience. Consider the principal $A_r$-bundle $\mathcal M^{\chi, n, \epsilon, ad}_{HB, X_r}\rightarrow \partial^{r,o} \mathcal M_{GHB}$. 

Let $\tilde p$ denote an isomorphism class of a stable Gieseker-Higgs bundle $(X_k, \mathcal E,\phi)\in \mathcal M^{\chi, n, \epsilon, ad}_{HB, X_r}$ and $p$ denote the image of $\tilde p$ i.e., the Gieseker-equivalent class of $(X_k, E,\phi)$. Then we have the following diagram

\begin{equation}\label{B}
\begin{tikzcd}
0\arrow{r} & \Omega_{\partial^{r,o} \mathcal M_{GHB}, p} \arrow[dotted]{d}{\sigma^{\flat}_r} \arrow{r}{u}& \Omega_{\mathcal M^{\chi, n, \epsilon, ad}_{HB, X_r}, \tilde p} \cong \mathbb H^1(\mathcal C^{\vee}_{\bullet})\arrow[bend right=50]{d}{\sigma^{\flat}}\arrow{r}{j}& \Omega_{A_r, e} \cong H^0(X_r, T_{X_r})^{\vee}\arrow{r} & 0\\
0 & T_{\partial^{r,o} \mathcal M_{GHB}, p} \arrow{l}& T_{\mathcal M^{\chi, n, \epsilon, ad}_{HB, X_r}, \tilde p}\cong \mathbb H^1(\mathcal C_{\bullet})\arrow{l}{v}\arrow[bend right=50]{u}{\omega^{\#}} & T_{A_r, e} \cong H^0(X_r, T_{X_r})\arrow{l}{i}\arrow[dotted]{u}{B_{r}} & 0\arrow{l}
\end{tikzcd}
\end{equation}

Notice that $v\circ \sigma^{\flat}\circ u=\sigma^{\flat}_r$ and $j\circ \omega^{\#}\circ i= B_r$.

\begin{lema}\label{rank} Let $(X_r, \mathcal E, \phi)$ be a stable Gieseker-Higgs bundle. Let $X_r=\cup_{i\in \Lambda} U_i$ be an open cover of $X_r$ such that the vector bundle $\mathcal E$ and $\omega_{X_r}$ are trivial over every $U_i$. Let us denote the co-cycle (with respect to the cover $\{U_i\}$) of $\mathcal E$ by $\{ A_{ij} \}$. Let $\{\phi_i\}_{i\in \Lambda}$ denote the collection of Higgs fields on the cover $\{U_i\}_{i\in \Lambda}$ which glue to give the global Higgs field $\phi$. Then
\begin{enumerate}
\item the morphism $i$ is given by $i(\psi)=([X_{\psi}, \phi_i], [X_{\psi}, A_{ij}])$.
\item the morphism $\omega^{\#}$ is given by $\omega^{\#}(\{\alpha_i\},\{\eta_{ij}\})=(\{-\alpha_i\}, \{\eta_{ij}\})$.
\item the composite $j\circ \omega^{\#}\circ i=0$.
\end{enumerate}
\end{lema}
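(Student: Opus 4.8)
The plan is to handle the three assertions in order, since (1) and (2) are just the translation of already-defined maps into the Čech/hypercohomology coordinates of Proposition \ref{prop6565}, and they feed directly into the only substantive claim, (3). For (1), recall that $i$ is by definition the differential at the identity of the orbit map $o\colon A_r\to \mathcal M^{\chi,n,\epsilon,ad}_{HB,X_r}$, $\gamma\mapsto[(\gamma^*\mathcal E,\gamma^*\phi)]$. I would evaluate $do_e$ on $\psi\in H^0(X_r,T_{X_r})=T_{A_r,e}$ by pulling $(\mathcal E,\phi)$ back along the infinitesimal automorphism $\mathrm{id}+\epsilon X_\psi$ of $X_r\times\spec k[\epsilon]$ and reading off the Kodaira--Spencer class of the resulting family over $k[\epsilon]$ in the cocycle description of remark \ref{co}. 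Concretely, the transition matrices $A_{ij}$ and the local Higgs fields $\phi_i$ are transported to $A_{ij}+\epsilon\,[X_\psi,A_{ij}]$ and $\phi_i+\epsilon\,[X_\psi,\phi_i]$, where $[X_\psi,-]$ denotes the infinitesimal (Lie-derivative) action of the vector field $X_\psi$ on the local matrix data; differentiating the relations $A_{ij}A_{jk}=A_{ik}$ and $A_{ij}\circ\phi_j\circ A_{ij}^{-1}=\phi_i$ shows that the pair $([X_\psi,\phi_i],[X_\psi,A_{ij}])$ satisfies the conditions of remark \ref{co}, hence represents $i(\psi)\in\mathbb H^1(\mathcal C_\bullet)$.

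For (2), the map $\omega^{\#}$ is by construction the inverse of the anchor $(\sigma')^{\flat}$ produced by the morphism of complexes \eqref{Pair}. By \eqref{omEga12} that morphism sends $(s_{ij},t_i)\mapsto(s_{ij},-t_i)$, i.e.\ it negates the degree-one (Higgs) component and fixes the degree-zero (bundle) component; since this involution is its own inverse, in the ordering $(\{\alpha_i\},\{\eta_{ij}\})$ used in \eqref{B} it reads $\omega^{\#}(\{\alpha_i\},\{\eta_{ij}\})=(\{-\alpha_i\},\{\eta_{ij}\})$.

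The heart of the lemma is (3). First I would identify the composite geometrically: the top row of \eqref{B} is the cotangent sequence of the principal $A_r$-bundle $\mathcal M^{\chi,n,\epsilon,ad}_{HB,X_r}\to\partial^{r,o}\mathcal M_{GHB}$, which is dual under Serre duality to the bottom tangent sequence, so $j$ is the transpose of $i$. Hence $B_r(\psi)(\psi')=\langle\omega^{\#}(i\psi),\,i\psi'\rangle=\omega(i\psi,i\psi')$ for $\psi,\psi'\in H^0(X_r,T_{X_r})$, and (3) is precisely the assertion that the tangent space to the $A_r$-orbit is isotropic for the form $\omega=-d\lambda$ of Theorem \ref{equisym}. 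I would deduce this isotropy from two inputs: first, that the symplectic potential $\lambda$ (subsection \ref{Liou440}, remark \ref{Liou441}) is $A_r$-invariant, which follows from the same naturality of $\mathcal C_\bullet$ under pullback by automorphisms of $X_r$ that yields equivariance in Theorem \ref{equisym}; and second, that $A_r=Aut(X_r/X_0)$ is an abelian torus (Lemma \ref{Auto101}), so the fundamental vector fields $V_\psi=i(\psi)$ satisfy $[V_\psi,V_{\psi'}]=0$. Writing $V=V_\psi$, $V'=V_{\psi'}$, Cartan's formula together with $\mathcal L_V\lambda=0$ gives $\iota_V\omega=-\iota_Vd\lambda=d(\iota_V\lambda)$, so that $\omega(V,V')=\iota_{V'}\iota_V\omega=V'(\iota_V\lambda)=\mathcal L_{V'}(\iota_V\lambda)=\iota_{[V',V]}\lambda+\iota_V\mathcal L_{V'}\lambda=0$, using $[V',V]=0$ and $\mathcal L_{V'}\lambda=0$. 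This is exactly $B_r=0$.

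The main obstacle is the conceptual step in (3): recognizing $B_r$ as the restriction of $\omega$ to the orbit directions and then establishing isotropy. The only non-formal ingredients are the $A_r$-invariance of the Liouville potential $\lambda$ and the abelianness of $A_r$; once these are in place the vanishing is the standard computation that orbits of a Hamiltonian abelian action generated by an invariant potential are isotropic. I should take care that this argument does not invoke the momentum-map statement of Theorem \ref{Casimir}, which is proved later, so as to avoid circularity. If one prefers a purely algebraic verification, the same vanishing follows by substituting the cocycles from (1) and (2) into the explicit pairing \eqref{Pyaar2} and checking that $\mathrm{Trace}\big([X_\psi,A_{ij}]\circ[X_{\psi'},\phi_j]-[X_\psi,\phi_i]\circ[X_{\psi'},A_{ij}]\big)$ is a Čech coboundary in $H^1(X_r,\omega_{X_r})\cong\mathbb C$.
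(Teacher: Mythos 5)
Your proofs of (1) and (2) coincide with the paper's: the same infinitesimal-pullback computation (transporting the transition matrices and local Higgs fields by $1+\epsilon X_{\psi}$) yields $i(\psi)=(\{[X_{\psi},\phi_i]\},\{[X_{\psi},A_{ij}]\})$, and (2) is read off directly from the morphism of complexes \eqref{Pair}. For (3) you take a genuinely different route. The paper substitutes the cocycles from (1) and (2) into the explicit pairing \eqref{Pyaar2} and kills the resulting trace expression by exploiting the special structure of $H^0(X_r,T_{X_r})$: the vector fields are supported on the rational chain, vanish at the nodes, and on each $R[r]_i$ any two of them are proportional. You instead identify $B_r(\psi)(\psi')=\omega(i\psi,i\psi')$ and prove that the $A_r$-orbits are isotropic via the Cartan-calculus identity $\omega(V,V')=\iota_{[V',V]}\lambda+\iota_{V}\mathcal L_{V'}\lambda$, using only the $A_r$-invariance of the symplectic potential $\lambda$ and the abelianness of $A_r$. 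Both inputs are legitimately available: the invariance of $\lambda$ is verified by the direct trace computation recorded in the proof of Theorem \ref{Casimir}(1), which does not depend on Lemma \ref{rank}, so your explicit worry about circularity is resolved; and $A_r\cong \mathbb G_m^r$ is a torus, so the fundamental vector fields commute. Your argument is more conceptual and would apply verbatim to any abelian action preserving an exact symplectic potential, whereas the paper's computation is self-contained at the cocycle level and does not presuppose that the pointwise pairing has already been assembled into a global closed $2$-form with potential (which is what Theorem \ref{equisym} provides). If you write your version up, make explicit that the Cartan computation takes place on the smooth variety $\mathcal M^{\chi,n,\epsilon,ad}_{HB,X_r}$, where the fundamental vector fields are global algebraic vector fields, so the identity $\mathcal L_{V'}\iota_V=\iota_V\mathcal L_{V'}+\iota_{[V',V]}$ applies without further justification.
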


\begin{proof}
\textsf{proof of (1).} Let $\psi \in H^0(X_r, T_{X_r})$. It is, by definition, an isomorphism
$$
\psi: X_r[\epsilon]\rightarrow X_r[\epsilon]
$$
which on the sheaf of rings can be described as follows.
\begin{equation}
\psi^{\#}:\mathcal O_{X_r}[\epsilon]\rightarrow \mathcal O_{X_r}[\epsilon]
\end{equation}
\hspace{3.5cm}given by \hspace{1cm} $f+\epsilon \cdot g\mapsto f+\epsilon\cdot (g+X_{\psi}(df))$,

where $X_{\psi}$ is the vector field on $X_r$ corresponding to the infinitesimal automorphism $\psi$.

We denote by $\mathcal E[\epsilon]$ the trivial deformation of $\mathcal E$ over $X_{r}[\epsilon]$. We want to write the co-cycle of $\psi^*\mathcal E[\epsilon]$ in terms of the cocycle of $\mathcal E$. The transition functions of $\psi^*(\mathcal E[\epsilon])$ are given by $A_{ij}+\epsilon B_{ij}$ for some $B_{ij}$ which fits into the following commutative diagram.

\begin{equation}
\begin{tikzcd}
(\mathcal O_{U_{ij}}[\epsilon])^{\oplus n}\arrow{r}{A_{ij}+\epsilon \cdot 0}\arrow{d}{(1+\epsilon X_{\psi})} & (\mathcal O_{U_{ij}}[\epsilon])^{\oplus n} \arrow{d}{(1+\epsilon X_{\psi})}\\
(\mathcal O_{U_{ij}}[\epsilon])^{\oplus n}\arrow[dotted]{r}{A_{ij}+\epsilon B_{ij}} & (\mathcal O_{U_{ij}}[\epsilon])^{\oplus n}
\end{tikzcd}
\end{equation}

where the map 
\begin{equation}
(\mathcal O_{U_{ij}}[\epsilon])^{\oplus n}\xrightarrow{(1+\epsilon X_{\psi})} (\mathcal O_{U_{ij}}[\epsilon])^{\oplus n}
\end{equation}
is given by 
$$
\{(f_i+\epsilon g_i)\}^n_{i=1}\mapsto \{(1+\epsilon X_{\psi})(f_i+\epsilon g_i)\}^n_{i=1}. 
$$
 
Since the above diagram commutes we have
\begin{align*} 
&(A_{ij}+\epsilon B_{ij})\circ (1+\epsilon X_{\psi})=(1+\epsilon X_{\psi})\circ A_{ij}\\
& \implies B_{ij}=[X_{\psi}, A_{ij}]
\end{align*}

It can be easily checked that $B_{ik}=A_{ij}B_{jk}+B_{ij}A_{jk}$ for any $i,j,k$ and hence $\{B_{ij}\}$ defines an element of $H^1(\mathcal End \mathcal E)$.

Consider the Higgs field $\phi+\epsilon \cdot 0: \mathcal E[\epsilon]\rightarrow \mathcal E[\epsilon]\otimes \omega_{X_r}$ over $X_r[\epsilon]$. It can be expressed as Higgs fields $\{\phi_i\}$ over each $U_i$ satisfying the following

\begin{equation}\label{comm121}
A_{ij}\phi_j A_{ij}^{-1}=\phi_i, ~~\forall i,j.
\end{equation}

Similarly, the Higgs field $\psi^*\phi$ can be expressed as $\{\phi_i+\epsilon \phi'_i\}$ which fits into the following commutative diagram.
  
\begin{equation}
\begin{tikzcd}
\mathcal E[\epsilon]|_{U_i[\epsilon]} \arrow{r}{\phi_{i}+\epsilon \cdot 0}\arrow{d}{(1+\epsilon X_{\psi})\otimes 1} & ((\mathcal E\otimes \omega_{X_r})[\epsilon])|_{U_{i}[\epsilon]} \arrow{d}{(1+\epsilon X_{\psi})\otimes 1}\\
(\psi^*\mathcal E[\epsilon])|_{U_i[\epsilon]}\arrow[dotted]{r}{\phi_{i}+\epsilon \phi'_i} & ((\psi^*\mathcal E[\epsilon])\otimes \omega_{X_r}[\epsilon])|_{U_{i}[\epsilon]}
\end{tikzcd}
\end{equation}

Since we have $(\phi_i+\epsilon \phi'_i)\circ ((1+\epsilon X_{\psi})\otimes 1)=((1+\epsilon X_{\psi})\otimes 1)\circ \phi_i $, it follows that $\phi'_i=[X_{\psi}, \phi_i]$. It can be easily verified that $\phi'_iA_{ij}-A_{ij}\phi'_j=B_{ij}\phi_j -\phi_iB_{ij}$ for all $i,j$. Hence $(\{\phi'_j\}, \{B_{ij}\})$ defines an element of $\mathbb H^1(\mathcal C_{\bullet})$.

Therefore, we conclude that $i(\psi)=(\{[X_{\psi}, \phi_j]\}, \{[X_{\psi}, A_{ij}]\})$.

\textsf{proof of (2).} From the description of the morphism of complexes $\mathcal C^{\vee}_{\bullet}\rightarrow \mathcal C_{\bullet}$, we see that $\omega^{\#}(\{\alpha_j\},\{\eta_{ij}\})=(\{-\alpha_j\}, \{\eta_{ij}\})$.

\textsf{proof of (3).} We have $\omega^{\#}\circ i(\psi)=(\{-[X_{\psi}, \phi_j]\}, \{[X_{\psi}, A_{ij}]\})$. Since the map $j$ is just the dual of the morphism $i$, we have 
\begin{align*}
(j\circ \omega^{\#}\circ i)(\psi)(\psi')
&=Trace([X_{\psi}, A_{ij}]\circ [X_{\psi'}, \phi_j]-[X_{\psi}, \phi_i]\circ [X_{\psi'}, A_{ij}]) ~\hspace{2cm}~~(\eqref{Pyaar1}, \eqref{Pyaar2}, ~~\text{and remark}~~ \ref{Liou441})
\end{align*}
The vector fields $X_{\psi}$ and $X_{\psi'}$ are elements of $H^0(X_r, T_{X_r})$. Since $X_0$ is a stable curve, the vector fields have support only along $R[r]$, the chain of $\mathbb P^1$'s. Moreover, it is not difficult to see that 
\begin{equation}
H^0(X_r, T_{X_r})\cong \oplus^r_{i=1} H^0(R[r]_i, T_{R[r]_i}(-x^+_i-x^-_i)),
\end{equation}
where $x^+_i$ and $x^-_i$ denote the two nodes of $R[r]_i$. Therefore, we see that the vector fields $X_{\psi}$ and $X_{\psi'}$ have support on $R[r]$ and they vanish at the nodes. Moreover, over any particular $\mathbb P^1$, say $R[r]_i$, two such vector fields only differ by a scalar. In order to compute the term $Trace([X_{\psi}, A_{ij}]\circ [X_{\psi'}, \phi_j]-[X_{\psi}, \phi_i]\circ [X_{\psi'}, A_{ij}])$ we will  concentrate and compute it on every $\mathbb P^1$.  

Let us concentrate on $R[r]_k$ for any $k$. Let us denote by $x^+_k$ and $x^-_k$ the two nodes on $R[r]_k$. Then $R[r]_k=(R[r]_k\setminus x^+_k)\cup (R[r]_k\setminus x^+_k)$ is an open cover. Let $A$ denote the transition function $R[r]_k\setminus \{x^+_k, x^-_k\}\rightarrow GL_n$, and $\phi^+_k$ and $\phi^-_k$ denote the Higgs fields on $R[r]_k\setminus \{x^-\}$ and $R[r]_k\setminus \{x^+\}$, respectively corresponding to the Higgs field $\phi|_{R[r]_k}$. From \eqref{comm121}, it follows that 
\begin{equation}\label{comm212}
A\circ \phi^+_k=\phi^-_k\circ A.
\end{equation}

We recall that $\mathcal E|_{R[r]_k}\cong \mathcal O(1)^{a_k}\oplus \mathcal O^{b_k}$, for some positive integer $a_k$ and some non-negative integer $b_k$ such that $a_k+b_k=n$. Therefore the matrix function $A$ has the following form 

\begin{equation}
z\mapsto \left[ 
\begin{array}{c|c} 
 B & C \\ 
 \hline 
 0 & D 
\end{array} 
\right], 
\end{equation}

where $B=\frac{1}{z}\cdot I_{a_k}$ and $D=I_{b_k}$. Similarly, $\phi^{\pm}_k$ is also of the following form 

\begin{equation}
\left[ 
\begin{array}{c|c} 
 \phi^{\pm}_1 & \phi^{\pm}_3 \\ 
 \hline 
 0 & \phi^{\pm}_2 
\end{array} 
\right]. 
\end{equation}

We want to compute $Trace([X_{\psi}, A]\circ [X_{\psi}, \phi^+_k]-[X_{\psi}, \phi^-_k]\circ [X_{\psi}, A])$. It follows from the description above that 

\begin{equation}
Trace([X_{\psi}, A]\circ [X_{\psi}, \phi^+_k]-[X_{\psi}, \phi^-_k]\circ [X_{\psi}, A])=Trace([X_{\psi}, B]\circ [X_{\psi}, \phi^+_1]-[X_{\psi}, \phi^-_1]\circ [X_{\psi}, B])
\end{equation}

Now let $\overrightarrow{f}\in \Gamma(R[r]_k\setminus \{x^+_k, x^-_k\}, \mathcal O^{n}_{R[r]_k})$. Then 
\begin{equation}
[X_{\psi}, B](\overrightarrow{f})=(X_{\psi}\circ B)(\overrightarrow{f})-(B\circ X_{\psi})(\overrightarrow{f})=(X_{\psi}B)(\overrightarrow{f})+B(X_{\psi}\overrightarrow{f})-B(X_{\psi}\overrightarrow{f})=(X_{\psi}B)(\overrightarrow{f}).
\end{equation} 

Similarly, 
\begin{equation}
[X_{\psi}, \phi^{\pm}_1]=X_{\psi}\phi^{\pm}_1.
\end{equation}

Therefore, 
\begin{align*} 
&Trace([X_{\psi}, B]\circ [X_{\psi}, \phi^+_1]-[X_{\psi}, \phi^-_1]\circ [X_{\psi}, B])\\
&=Trace((X_{\psi}B)\circ (X_{\psi} \phi^+_1)-(X_{\psi} \phi^-_1) \circ(X_{\psi}, B))\\
&=\frac{1}{rank~~B}\cdot Trace(X_{\psi}B)\cdot Trace(X_{\psi} \phi^+_1-X_{\psi} \phi^-_1) ~~~~~~~~(\text{Since}~~ X_{\psi}B~~\text{is a diagonal matrix})
\end{align*} 

Now from \eqref{comm212}, we have $\phi^+_k=A^{-1}\circ \phi^-_k\circ A$. Therefore, $Trace(\phi^+_k)=Trace(\phi^-_k)$ and $Trace(\phi^+_1)=Trace(\phi^-_1)$. Since $X_{\psi}(Trace(\phi^{\pm}_1))=Trace(X_{\psi}\phi^{\pm}_1)$, we have 

\begin{equation}
Trace(X_{\psi}\phi^+_1)=Trace(X_{\psi}\phi^-_1).
\end{equation}

Therefore, 
\begin{align*} 
&Trace([X_{\psi}, B]\circ [X_{\psi}, \phi^+_1]-[X_{\psi}, \phi^-_1]\circ [X_{\psi}, B])\\
&=\frac{1}{rank~~B}\cdot Trace(X_{\psi}B)\cdot Trace(X_{\psi} \phi^+_1-X_{\psi} \phi^-_1)\\
&=0
\end{align*} 

Therefore, we conclude that $(j\circ \omega^{\#}\circ i)(\psi)(\psi')=0$.
\end{proof}

\begin{rema}
Notice that the statement (1) in lemma \ref{rank} gives an alternative proof of lemma \ref{Tesimal}.
\end{rema}

\begin{thm}\label{Foliation11}
The stratification of the Poisson variety $\mathcal M_{GHB}$ given by the successive degeneracy loci of the Poisson structure \eqref{Pstrat} is the same as the stratification given by the successive singular loci \eqref{strat2055}. Moreover, $\partial^r \mathcal M_{GHB}\setminus \partial^{r+1}\mathcal M_{GHB}$ is a smooth Poisson subvariety of dimension $2(n^2(g-1)+1)-r$ with constant Poisson rank $2(n^2(g-1)+1)-2r$. In particular, the most singular locus $\partial^n \mathcal M_{GHB}$ is a smooth Poisson variety of dimension $2(n^2(g-1)+1)-n$ with constant Poisson rank $2(n^2(g-1)+1)-2n$.
\end{thm}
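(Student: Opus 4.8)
The plan is to compute the Poisson rank pointwise on each locally closed stratum $\partial^{r,o}\mathcal M_{GHB}$ and then read off the coincidence of the two filtrations. By Proposition \ref{Strat}, $\partial^{r,o}\mathcal M_{GHB}$ is a smooth locally closed Poisson subvariety of dimension $2(n^2(g-1)+1)-r$, and by Example \ref{exam12} the Poisson rank of $\sigma$ at a point $p\in\partial^{r,o}\mathcal M_{GHB}$ agrees with the rank of the restricted anchor $\sigma^{\flat}_r\colon\Omega_{\partial^{r,o}\mathcal M_{GHB}}\to T_{\partial^{r,o}\mathcal M_{GHB}}$. Hence it suffices to compute $\mathrm{rank}\,\sigma^{\flat}_r$ at each point, which I would carry out through the diagram \eqref{B}.

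Recall from Remark \ref{rema450} and Corollary \ref{TQ1} that $f_r\colon \mathcal M^{\chi,n,\epsilon,ad}_{HB,X_r}\to\partial^{r,o}\mathcal M_{GHB}$ is a Poisson principal $A_r$-bundle, where $A_r=Aut(X_r/X_0)$ is a torus with $\dim A_r=\dim H^0(X_r,T_{X_r})=r$, and that the total space carries the $A_r$-equivariant symplectic form $\omega$ of Theorem \ref{equisym}. Writing $M:=\mathcal M^{\chi,n,\epsilon,ad}_{HB,X_r}$, $2N:=2(n^2(g-1)+1)=\dim M$, and $V:=\mathrm{im}(i)\subset T_{M,\tilde p}$ for the tangent space to the $A_r$-orbit, the bottom row of \eqref{B} is exact, so $\ker v=V$ with $\dim V=r$, and the top row is its dual, so $\mathrm{im}(u)=\ker j=\mathrm{Ann}(V)$. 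Since $\omega$ is symplectic, the anchor $\sigma^{\flat}=(\omega^{\#})^{-1}\colon\Omega_M\to T_M$ is an isomorphism, and I would use the identity $\sigma^{\flat}_r=v\circ\sigma^{\flat}\circ u$ recorded just below \eqref{B}.

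The computation is then pure symplectic linear algebra. Under $\sigma^{\flat}=(\omega^{\#})^{-1}$ one has $\sigma^{\flat}(\mathrm{Ann}(V))=V^{\omega}$, the symplectic orthogonal $\{w:\omega(w,V)=0\}$, of dimension $2N-r$. The essential input is Lemma \ref{rank}(3): the vanishing $B_r=j\circ\omega^{\#}\circ i=0$ says exactly that $\omega(i(\psi),i(\psi'))=0$ for all $\psi,\psi'$, i.e. the orbit directions $V$ form an $\omega$-isotropic subspace, $V\subseteq V^{\omega}$. Therefore
\begin{equation}
\mathrm{rank}\,\sigma^{\flat}_r=\dim v(V^{\omega})=\dim V^{\omega}-\dim(V\cap V^{\omega})=(2N-r)-r=2N-2r,
\end{equation}
which is constant on the stratum and equals $2(n^2(g-1)+1)-2r$. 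Geometrically, the torus quotient drops the rank by $2r$ rather than $r$: one loses $r$ directions in passing from $\Omega_M$ to $\mathrm{Ann}(V)$, and another $r$ because the isotropy places all of $V=\ker v$ inside $V^{\omega}=\sigma^{\flat}(\mathrm{Ann}(V))$.

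Finally I would match the two stratifications. Since the rank on $\partial^{k,o}\mathcal M_{GHB}$ is exactly $2N-2k$, the locus where the rank is at most $2N-2r$ is $\bigsqcup_{k\ge r}\partial^{k,o}\mathcal M_{GHB}=\partial^{r}\mathcal M_{GHB}$; combined with Proposition \ref{Strat} (each $\partial^r\mathcal M_{GHB}$ being a closed Poisson subvariety) this yields $D_{2N-2r}(\sigma)=\partial^r\mathcal M_{GHB}$, so the filtration by Poisson ranks \eqref{Pstrat} coincides termwise with the filtration by singular loci \eqref{strat2055}, and the statement for the most singular locus is the case $r=n$. The one genuinely substantive step is the isotropy $B_r=0$ of Lemma \ref{rank}(3), which rests on the torus structure of $A_r$ together with the fact that global vector fields on $X_r$ are supported on the rational chain and vanish at the nodes; granting this, the remainder is the rank count above and the bookkeeping of strata.
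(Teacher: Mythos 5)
Your proposal is correct and follows essentially the same route as the paper: the paper's own proof is the one-line observation that the theorem "follows from $B_r=0$," relying on the diagram \eqref{B}, the identity $\sigma^{\flat}_r=v\circ\sigma^{\flat}\circ u$, and Lemma \ref{rank}(3), exactly the ingredients you use. Your write-up simply makes explicit the symplectic linear algebra (isotropy of the orbit directions forcing the rank drop from $2N-r$ to $2N-2r$) that the paper leaves implicit.
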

\begin{proof}
Follows from the fact that $B_r=0$ for all $r=0,\dots, n$.
\end{proof}

\begin{rema}\label{Sepa}
Let us denote by $\mathcal M^{\epsilon_{\bullet}}_{PVB}$ the moduli of the parabolic vector bundles of rank $n$ and degree $d-n$ over $\tilde{X_0}$ with full-flags at the two points $x^+$ and $x^-$ and semi-stable with respect to sufficiently small and generic choice of parabolic weights $\epsilon_{\bullet}$. In \cite{5}, it is shown that the most singular locus $\partial^n \mathcal M_{GVB}$ is isomorphic to $\mathcal M^{\epsilon_{\bullet}}_{PVB}$. Let us denote by $\mathcal M^{\epsilon_{\bullet}}_{PHB}$ the moduli of non-strongly parabolic-Higgs bundles over the curve $\tilde{X_0}$ of rank $n$ and degree $d-n$ with full-flagged parabolic structures at the two pre-images $x^+$ and $x^-$ and with sufficiently small and generic choice of parabolic weights $\epsilon_{\bullet}$. One can show that the most singular locus $\partial^n \mathcal M_{GHB}$ is isomorphic to the closed Poisson sub-scheme of $\mathcal M^{\epsilon_{\bullet}}_{PHB}$ consisting of non-strongly parabolic-Higgs bundles whose eigenvalues of the Higgs field at the two points $x^+$ and $x^-$ are the same. We will discuss this in a separate note.  
\end{rema}

\subsection{\textbf{The induced Poisson structure on the normalisation of $\mathcal M_{GHB}$}} Consider the normalization $q:\widetilde{\mathcal M}_{GHB}\rightarrow \mathcal M_{GHB}$. It is a smooth variety with normal-crossing divisor $q^{-1}(\partial \mathcal M_{GHB})$. The pullback of the log-symplectic form induces a log-symplectic structure on $\widetilde{\mathcal M}_{GHB}$. The variety $\widetilde{\mathcal M}_{GHB}$ has the following stratification.

\begin{equation}
\widetilde{\mathcal M}_{GHB}\supset \partial \widetilde{\mathcal M}_{GHB}:=q^{-1}(\partial \mathcal M_{GHB})\supset \partial^2\widetilde{\mathcal M}_{GHB}:=q^{-1}(\partial^2\mathcal M_{GHB})\supset \cdots
\end{equation}
It is straightforward to check that $\partial^{r+1}\widetilde{\mathcal M}_{GHB}$ is the singular locus of $\partial^r\widetilde{\mathcal M}_{GHB}$ for every $r\geq 1$. The sub-scheme $\partial^{r,o} \mathcal M_{GHB}$ is locally an intersection of $r$ connected components of $\mathcal M_{GHB}$. Therefore, locally the inverse image of $\partial^{r,o}\mathcal M_{GHB}$ is the disjoint union of $r$ sub-varieties, each of which is isomorphic (locally) to $\partial^{r,o}\mathcal M_{GHB}$. Therefore, we see that the Poisson rank at a point of the strata $\partial^{r,o}\widetilde{\mathcal M}_{GHB}(:=\partial^{r}\widetilde{\mathcal M}_{GHB}\setminus \partial^{r+1}\widetilde{\mathcal M}_{GHB})$ is the same as the Poisson rank at the image of this point under the normalisation map. We can compute the Poisson rank at a point using the same diagram \eqref{B}. 

\begin{rema} By lemma \ref{rank}, it follows that the bi-residues/magnetic terms \cite[Example 3.3.]{24} is $0$ for every stratum of the normal crossing divisor $\partial \widetilde{\mathcal M_{GHB}}$. Therefore, by \cite[proposition 3.6]{24}, it follows that the local normal form of the Poisson structure $\widetilde{\mathcal M_{GHB}}$ is stably equivalent to

\begin{equation}
\hspace{1cm} \omega=\sum^k_{j=1} dp_j\wedge \frac{dy_j}{y_j}, ~~~\text{for some integer}~~ k, 
\end{equation}

as in the example \ref{cotex}. 
\end{rema}

\section{\textbf{Description of the symplectic foliation}}

We recall from \S 6 that the moduli space $\mathcal M^{\chi, n,\epsilon, ad}_{HB, X_r}$ of $\epsilon$ stable admissible Higgs bundles of rank $n$ and Euler characteristic $\chi$ on the curve $X_r$ is a smooth variety. The torus $A_r:=Aut(X_r/X_0)$ acts freely on $\mathcal M^{\chi, n,\epsilon, ad}_{HB, X_r}$. The quotient is isomorphic to $\partial^{r,o}\mathcal M_{GHB}:=\partial^r \mathcal M_{GHB}\setminus \partial^{r+1}\mathcal M_{GHB}$. From theorem \ref{equisym}, it follows that $\mathcal M^{\chi, n,\epsilon, ad}_{HB, X_r}$ is a $A_r$-symplectic manifold. We will see that the action is Hamiltonian i.e., it has a momentum map. The Hamiltonian action of an algebraic group and the momentum map can be defined in the algebraic setting. We refer to \cite{L}, \cite{L I}, and \cite[Chapter II, \S 1, \S 2]{V II} for the details. Before describing the momentum map notice that $T_{A_r,e} \cong H^0(X_r, T_{X_r})\cong \oplus ^r_{i=1} H^0(R[r]_i, \mathcal O_{R[r]_i})$. 

\begin{thm}\label{Casimir}
\begin{enumerate}
\item The map 
\begin{equation}
\mu_r: \mathcal M^{\chi, n,\epsilon, ad}_{HB, X_r}\rightarrow (T_{A_r,e})^{\vee} 
\end{equation}
defined by 
\begin{equation}
\mu_r(\mathcal E, \phi)(X_{\psi})=\lambda(\imath(X_{\psi}))=Trace(\phi\circ \imath(X_{\psi})), ~~\text{for}~~X_{\psi}\in H^0(X_r, T_{X_r})
\end{equation}
is a momentum map, where 
\begin{enumerate}
\item $\imath: H^0(X_r, T_{X_r})\hookrightarrow \mathbb H^1(\mathcal C_{\bullet})$ denotes the differential of the orbit map $A_r\rightarrow \mathcal M^{\chi, n,\epsilon, ad}_{HB, X_r}$ at the point $(\mathcal E,\phi)$.

\item $\lambda$ denotes the symplectic potential on $\mathcal M^{\chi, n,\epsilon, ad}_{HB, X_r}$ (\ref{Liou440} and remark \ref{Liou441}).
\end{enumerate} 

\item $\mu_r(\mathcal E,\phi)=(Trace ~~\phi|_{\mathcal O_{R[r]_1}(1)^{\oplus a_1}}, \dots, Trace ~~\phi|_{\mathcal O_{R[r]_r}(1)^{\oplus a_r}})$, where $\mathcal E|_{R[r]_i}\cong \mathcal O_{R[r]_i}(1)^{\oplus a_i}\oplus \mathcal O_{R[r]_i}^{\oplus b_i}$ for every $i=1,\dots, r$. 

\item The coordinate functions of $\mu_r$ are the Casimir functions of $\mathcal M^{a_{\bullet}}_{GHB}$ \eqref{admi445}. In particular, the variety $\mu^{-1}_r(0)\cap \mathcal M^{a_{\bullet}}_{GHB}$ is a symplectic leaf of $\mathcal M^{a_{\bullet}}_{GHB}$ containing $\Omega_{\mathcal M^{a_{\bullet}}_{GVB}}$. Moreover, it consists of triples $(X_r, \mathcal E, \phi)$ such that the trace of $\phi|_{\mathcal O_{R[r]_i} (1)^{\oplus a_i}}: \mathcal O_{R[r]_i} (1)^{\oplus a_i}\rightarrow \mathcal O_{R[r]_i} (1)^{\oplus a_i}$ is zero for all $i=1,\dots , r$.
\end{enumerate}
\end{thm}
\begin{proof}
\textsf{proof of (1).} Similar to the case of a smooth curve, the symplectic form on $\mathcal M^{\chi, n,\epsilon, ad}_{HB, X_r}$ is exact, i.e., there exists a $1$-form $\lambda$ such that the symplectic form is given by $-d\lambda$. In the literature, such a form is called the symplectic potential. In our case, the symplectic potential is an extension of the Liouville $1$-form, as in the case of smooth curves. The symplectic potential form $\lambda$ can be described similarly as in \ref{Liou440} (remark \ref{Liou441}). 

It is well-known that any $G$-variety $Z$ equipped with an equivariant symplectic potential $\lambda$ has an equivariant momentum map.

\begin{equation}
\mu: Z\rightarrow \mathfrak g^{\vee},~~~\mathfrak g:=\text{Lie algebra of}~~G
\end{equation}
given by 
$$
\mu(z)(\xi)=\lambda(z)(\imath(\xi)),
$$
where 
\begin{enumerate}
\item $z\in Z$, and $\xi\in \mathfrak g$,
\item $\imath : \mathfrak g\rightarrow T_z Z$, denotes the differential of the orbit map at the point $z$.
\end{enumerate} 

Therefore, it is enough to check that the symplectic potential in our case is preserved by the action of the torus $A_r$. From \ref{Liou440}, it follows that 
\begin{equation}
\lambda(\mathcal E,\phi)(\{s_{ij}\}, \{t_j\})=\{Trace(\phi\circ s_{ij})\},
\end{equation}
where
\begin{enumerate}
\item $(\mathcal E, \phi)\in \mathcal M^{\chi, n,\epsilon, ad}_{HB, X_r}$, and 
\item $(\{s_{ij}\}, \{t_i\})\in \mathbb H^1(\mathcal C_{\bullet})$, the tangent space of $\mathcal M^{\chi, n,\epsilon, ad}_{HB, X_r}$ at the point $(\mathcal E, \phi)$ (remark \ref{co}).
\end{enumerate}
Therefore, we see that
\begin{align*}
(f^*\lambda)(\mathcal E,\phi)(s_{ij}, t_i)
&=Trace(f^{\#}\circ \phi \circ (f^{\#})^{-1}\circ f^{\#}\circ s_{ij}\circ(f^{\#})^{-1})\\
&=Trace(\phi\circ s_{ij})\\
&=\lambda(\mathcal E, \phi)(s_{ij}, t_i),
\end{align*}
where $f\in A_r$ is an automorphism of $X_r$, and $f^{\#}$ denotes the induced morphism $f^*\mathcal E\rightarrow \mathcal E$. This completes the proof of (1).

\textsf{proof of $(2)$ and $(3)$.} Since $\mu_r$ is $A_r$-invariant map, it descends to $\mathcal M^{a_{\bullet}}_{GHB}$. 
\begin{claim}
The morphism $\mu_r$ is a smooth morphism. 
\end{claim}
\begin{claimproof}
To prove this, it is enough to show that the morphism $d\mu_r: T_{\mathcal M^{\chi, n,\epsilon, ad}_{HB, X_r}}\rightarrow (T_{A_r,e})^{\vee} 
$ is surjective at every point of $\mathcal M^{\chi, n,\epsilon, ad}_{HB, X_r}$. Let $(\mathcal E, \phi)$ be a point in $\mathcal M^{\chi, n,\epsilon, ad}_{HB, X_r}$. Recall that the tangent space of $\mathcal M^{\chi, n,\epsilon, ad}_{HB, X_r}$ is isomorphic to $\mathbb H^1(\mathcal C_{\bullet})$. Since $\mu_r$ is a momentum map, the morphism
\begin{equation}
d\mu_r: \mathbb H^1(\mathcal C_{\bullet})\rightarrow T_{A_r,e}^{\vee}
\end{equation}
is the same as $j\circ \omega^{\#}$ (diagram \ref{B}). By lemma \ref{Tesimal} and remark \ref{Free}, the morphism $j$ is surjective. Since $\omega^{\#}$ is an isomorphism, the morphism $d\mu_r$ is also surjective. Hence, $\mu_r$ is a smooth morphism.
\end{claimproof}

Therefore, $\mu_r^{-1}(0)\cap \mathcal M^{a_{\bullet}}_{GHB}$ is a symplectic leaf of $\mathcal M^{a_{\bullet}}_{GHB}$. Notice that $\mu_r$ is the quotient map in the following short exact sequence  

\begin{equation}
0\rightarrow \pi^*\Omega_{\mathcal M^{a_{\bullet}}_{GVB}}\rightarrow \Omega_{\mathcal M^{\chi, n, \epsilon, a_{\bullet}}_{VB, X_r}}\rightarrow \mathcal O_{\mathcal M^{\chi, n, \epsilon, a_{\bullet}}_{VB, X_r}}\otimes \Omega_e A_r\rightarrow 0
\end{equation}

Therefore, it follows that $\mu_r^{-1}(0)\cap \mathcal M^{a_{\bullet}}_{GHB}$ contains $\Omega_{\mathcal M^{a_{\bullet}}_{GVB}}$. Here, by abuse of notation, we denote both the sheaf and its total space by the same notation $\Omega_{\mathcal M^{a_{\bullet}}_{GVB}}$.

Now let $(\mathcal E, \phi)$ be a Gieseker-Higgs bundle on $X_r$ and $X_{\psi}$ be an element of $H^0(R[r]_i, \omega_{X_r}^{\vee})$. Let us denote by $x^+_i$ and $x^-_i$ the two nodes on $R[r]_i$. Then $R[r]_i=(R[r]_i\setminus x^+_i)\cup (R[r]_i\setminus x^+_i)$ is an open cover. Let $A$ denote the transition function $R[r]_i\setminus \{x^+_i, x^-_i\}\rightarrow GL_n$. We recall that $\mathcal E|_{R[r]_i}\cong \mathcal O(1)^{a_i}\oplus \mathcal O^{b_i}$, for some positive integer $a_i$ and some non-negative integer $b_i$ such that $a_i+b_i=n$. Therefore the matrix function $A$ has the following form 

\begin{equation}
z\mapsto \left[ 
\begin{array}{c|c} 
 B & C \\ 
 \hline 
 0 & D 
\end{array} 
\right], 
\end{equation}

where $B=\frac{1}{z}\cdot I_{a_i}$ and $D=I_{b_i}$. Similarly, $\phi$ is also of the following form 

\begin{equation}
\left[ 
\begin{array}{c|c} 
 \phi_1 & \phi_3 \\ 
 \hline 
 0 & \phi_2 
\end{array} 
\right]. 
\end{equation}

We easily see that 
\begin{equation}
Trace(\phi\circ[X_{\psi}, A])=Trace(\phi_1\circ [X_{\psi}, B]).
\end{equation}
Let $\overrightarrow{f}\in \Gamma(R[r]_i\setminus \{x^+_i, x^-_i\}, \mathcal O^{n}_{R[r]_i})$. Then 
\begin{equation}
[X_{\psi}, B](\overrightarrow{f})=X_{\psi}(\frac{1}{z}\cdot \overrightarrow{f})-\frac{1}{z}\cdot X_{\psi}(\overrightarrow{f})=\overrightarrow{f}\cdot X_{\psi}(\frac{1}{z}).
\end{equation} 

Also, notice that $X_{\psi}(z)$ is some scalar multiple of $z\cdot \frac{d}{dz}$. Therefore, 
\begin{equation}
Trace(\phi_1\circ [X_{\psi}, B])=Trace(\phi_1\circ (-\frac{1}{z}\cdot I))=-\frac{1}{z}\cdot Trace(\phi_1).
\end{equation}

Now using the identification $\Omega_{R[r]_i}(x^+_i+x^-_i)\cong \mathcal O_{R[r]_i}$, we can identify $-\frac{1}{z}\cdot Trace(\phi_1)$ with $Trace(\phi_1)$. Hence the $i$-th component of $\mu_r(\mathcal E, \phi)$ is $Trace~~\phi|_{\mathcal O_{R[r]_i}(1)^{\oplus a_i}}$.
\end{proof}

\begin{rema}
Let us denote by $\mathcal M^{\epsilon_{\bullet}}_{SPHB}$ the closed subscheme of $\mathcal M^{\epsilon_{\bullet}}_{PHB}$ (remark \ref{Sepa}) consisting of parabolic Higgs bundles whose eigen-values of the Higgs field at the two points $x^+$ and $x^-$ are all $0$. It follows from proposition \ref{Casimir} that $\mathcal M^{\epsilon_{\bullet}}_{SPHB}$ is a symplectic leaf of the most singular locus $\partial^n\mathcal M_{GHB}$.
\end{rema}

\begin{rema}
When the nodal curve is reducible as in remark \ref{Reducible}, the moduli space $\mathcal M_{GHB}$ is the union of two log-symplectic manifolds transversally intersecting along a smooth divisor (\cite[\S 9, remark 9.3]{3} and \cite{4}). It follows from remark \ref{Sepa} that the divisor is isomorphic (as a Poisson scheme) to the moduli space of stable (with respect to parabolic weights determined by the polarisation on the moduli space \cite[Lemma 3.4.2]{4}) parabolic-Higgs bundles with the same eigenvalues (of the Higgs field) at the two pre-images of the node. It follows from proposition \ref{Casimir} that the moduli space of strongly-parabolic Higgs bundles is a symplectic leaf of the divisor.
\end{rema}

\section{\textbf{Algebraically completely integrability}}

\subsection{\textbf{The Hitchin map and its general fibres}} There is a Hitchin map on the moduli of Gieseker-Higgs bundles which is defined as follows.

\begin{equation}
h: \mathcal M_{GHB}\rightarrow B:=\oplus_{i=1}^n H^0(X_0, \omega^{\otimes i}_{X_0})
\end{equation}
given by
\begin{center}
$(X_r, \mathcal E, \phi: \mathcal E\rightarrow \mathcal E\otimes \pi^*_r\omega_{X_0})\mapsto (\text{Trace}~~\phi, \dots, (-1)^{i-1}\text{Trace}~~(\wedge^i \phi),\dots, (-1)^{n-1}\text{Trace}~~(\wedge^n \phi))$
\end{center}
Notice, using properties \eqref{ProPro123}, it follows that 
\begin{equation}
\oplus_{i=1}^n H^0(X_r, \omega^{\otimes i}_{X_r})\cong \oplus_{i=1}^n H^0(X_0, \omega^{\otimes i}_{X_0}).
\end{equation} 
It is shown in \cite{3}, that the Hitchin map $h$ is proper.

For a general element $\xi\in B$, \cite[Section 7]{3} constructs a spectral curve $X_{\xi}$, which is an irreducible vine curve, ramified outside the nodes, such that there is the following correspondence.

\begin{equation}
\bigg\{\text{line bundles on the curve}~~X_{\xi} ~~\text{of degree}~~\delta \bigg\}\leftrightarrow\left\{
\begin{array}{@{}ll@{}}
\text{Gieseker-Higgs bundles on} X_0\\
\text{of rank}~~n~~\text{and degree}~~\delta-n(n-1)(g-1) \\
\text{with characteristic polynomial}~~ \xi
\end{array}\right\}.
\end{equation}

Therefore the subvariety consisting of the objects on the right is isomorphic to the Picard $Pic^{\delta}_{X_{\xi}}$ of the vine curve $X_{\xi}$, which is a semiabelian variety. The full Hitchin-fiber $h^{-1}({\xi})$ is a compactification of this semi-abelian variety with normal crossing singularity. Moreover, the smooth locus of $h^{-1}(\xi)$ is precisely $Pic^{\delta}_{X_{\xi}}$. For the precise statements we refer to \cite[Theorem 8.16((Quasi-abelianization))]{3}.

\subsection{\textbf{Completely integrability}}
\begin{defe}\label{Lagrangian}
Let $X$ be a variety with normal crossing singularity with a log-symplectic form i.e., a non-degenerate closed section of $\wedge^2 \Omega_X(log~~\partial X)$. An irreducible subvariety $Y\subset X$ is co-isotropic (resp. Lagrangian) if it is generically a co-isotropic(resp. Lagrangian ) subvariety of a symplectic leaf; i.e., Y is contained in the closure $\overline{S}$ of a symplectic leaf $S\subset X$ and the intersection $Y\cap S$ is a co-isotropic (resp. Lagrangian ) subvariety of $S$.
\end{defe}

\begin{defe}\label{ACIS}
A Poisson structure (may not be of uniform rank) on a variety $X$ (possibly singular) is an algebraically completely Integrable system structure on $h: X\rightarrow B$ if $h$ is a Lagrangian fibration over the complement of some properly closed subvariety of $B$.
\end{defe}

From Theorem \ref{Foliation11}, it follows that $\mathcal M_{GHB}\setminus \partial \mathcal M_{GHB}$ is the maximal symplectic leaf (dense open) of $\mathcal M_{GHB}$. From the description of the fibre of the Hitchin map \cite[Theorem 8.16((Quasi-abelianization))]{3}, it follows that the fibre of $h: \mathcal M_{GHB}\setminus \partial \mathcal M_{GHB}\rightarrow B$ over a general point $\xi$ is the Picard variety $J_{\xi}$ of the vine curve $X_{\xi}$. We will now show that the general fibre is a Lagrangian in $\mathcal M_{GHB}\setminus \partial \mathcal M_{GHB}$ following the strategy of \cite{23} and \cite{22}.

\begin{lema}
Let $f:X\rightarrow Y$ be a morphism between two projective nodal curves of degree $d$ and unramified along the nodes. Then
\begin{enumerate}
\item we have a short exact sequence
\begin{equation}\label{Eq1001}
0\rightarrow f^*\omega_Y\rightarrow \omega_X \rightarrow \omega_{X/Y} \rightarrow 0,
\end{equation}
where $\omega_X$ and $\omega_Y$ are the sheaf of differentials of the nodal curves with logarithmic poles along the nodes and $\omega_{X/Y}$ denote the sheaf of relative logarithmic differentials.
\item $\omega_{X/Y}\cong \mathcal O_R$, where $R$ is the divisor $R:=\sum_{_{p\in X}} \text{length}~~(\omega_{X/Y})_p\cdot p$.
\item $2g_X-2=d(2g_Y-2)+\text{deg}~~R$, where $g_X$ and $g_Y$ are the arithmetic genus of $X$ and $Y$, respectively.
\end{enumerate}
\end{lema}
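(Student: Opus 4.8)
The plan is to treat the three assertions as the logarithmic Riemann--Hurwitz package for a generically étale map of Gorenstein curves, deducing them in order from the log-cotangent sequence. Throughout I use that a nodal curve is Gorenstein, so its sheaf of log differentials is invertible; at a node $xy=0$ the canonical log structure contributes the relation $dx/x=-dy/y$, which is exactly the local description of the dualizing sheaf. Hence $\omega_X$ and $\omega_Y$ are invertible and coincide with the dualizing sheaves, a fact I will need for the degree count in (3).

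For (1), I would begin with the cotangent sequence of the log-smooth curves $X\to\spec k$ and $Y\to\spec k$ (with their canonical log structures from the nodes), which furnishes the right-exact
\[
f^*\omega_Y \xrightarrow{\,df\,} \omega_X \longrightarrow \omega_{X/Y}\longrightarrow 0,
\]
with $\omega_{X/Y}$ the relative log differentials by definition. The substance is the injectivity of $df$. Since $k=\mathbb C$ and $f$ is finite of degree $d$, its restriction to each irreducible component of $X$ is separable onto a component of $Y$ and contracts no component, so $df$ is an isomorphism at every generic point of $X$. As $f^*\omega_Y$ is torsion-free (being invertible), a map injective at all generic points has vanishing kernel, giving the short exact sequence \eqref{Eq1001}. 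The hypothesis that $f$ is unramified along the nodes ensures $df$ is an isomorphism of log structures at the nodes, so $\omega_{X/Y}$ is supported on the smooth locus of $X$.

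For (2), I would localize at the finitely many points $p$ in the support of $\omega_{X/Y}$, all smooth points of $X$ by the last remark. There $\omega_X$ and $f^*\omega_Y$ are free of rank one and $df$ is multiplication by a local function vanishing to order $\ell_p:=\mathrm{length}(\omega_{X/Y})_p$; thus $\omega_X\cong f^*\omega_Y\otimes\mathcal O_X(R)$ with $R=\sum_p\ell_p\cdot p$, and at each $p$ the cokernel is the cyclic module $\mathcal O_{X,p}/m_p^{\ell_p}$. Summing over $p$ identifies $\omega_{X/Y}$ with the structure sheaf $\mathcal O_R$ of the divisor $R$.

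For (3), I would take Euler characteristics (equivalently degrees) in \eqref{Eq1001}: additivity gives $\deg\omega_X=\deg f^*\omega_Y+\deg\mathcal O_R=d\deg\omega_Y+\deg R$, using the projection formula $\deg f^*\omega_Y=d\deg\omega_Y$ and $\deg\mathcal O_R=\deg R$. Finally I would compute $\deg\omega_X=2g_X-2$ from Serre duality $\chi(\omega_X)=-\chi(\mathcal O_X)=g_X-1$ together with Riemann--Roch $\chi(\omega_X)=\deg\omega_X+1-g_X$, and likewise $\deg\omega_Y=2g_Y-2$; substitution yields $2g_X-2=d(2g_Y-2)+\deg R$. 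I expect the only genuine obstacle to be step (1): proving injectivity of $df$ cleanly when $X$ or $Y$ is reducible (so the slick ``nonzero map of line bundles on an integral curve'' argument is unavailable) and pinning down that the log differential line bundle really is the dualizing sheaf at the nodes; granting these, (2) and (3) are formal.
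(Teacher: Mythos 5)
Your proposal is correct and follows essentially the same route as the paper's (much terser) proof: the functorial log-cotangent sequence gives the map $f^*\omega_Y\rightarrow\omega_X$, the ramification divisor $R$ is the locus where it fails to be an isomorphism so that the cokernel is $\mathcal O_R$, and $(3)$ follows by taking degrees in \eqref{Eq1001}. You simply supply details the paper leaves implicit — injectivity via torsion-freeness of an invertible sheaf on a reduced curve, the local identification of the cokernel at smooth points, and the identification of the log differentials with the dualizing sheaf needed for $\deg\omega_X=2g_X-2$.
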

\begin{proof}
Since $f$ is unramified at the node by the functorial property of the sheaf of logarithmic differentials, we have the following inclusion of rank $1$-locally free sheaves $f^*\omega_Y\rightarrow \omega_X$. The locus, where the inclusion is not an isomorphism, is, by definition, the ramification divisor $R$. Therefore $\mathcal O_R\cong \omega_{X/Y}$. The statement $(3)$ follows from \ref{Eq1001}.
\end{proof}

The following lemma is a straightforward generlisation of \cite[\S 5]{15}, \cite[Remark 3.7]{6} and \cite[\S 4.3]{16}. One can also find a proof in \cite[Proposition 4.1]{9}.
\vspace{-.2em}
\begin{lema}
Let $L$ be a line bundle on the vine curve $X_{\xi}$ such the push-forward of $L\rightarrow L\otimes f^* \omega_{X_0}$ is a Gieseker-Higgs bundle $\phi: \mathcal E\rightarrow \mathcal E\otimes \omega_{X_0}$, where the map $L\rightarrow L\otimes f^* \omega_{X_0}$ is given by the multiplication by the canonical section of $f^*\omega_{X_0}$ on $X_{\xi}$. Suppose also that the spectral curve $f: X_{\xi}\rightarrow X_0$ is ramified along a divisor $R\subset X_{\xi}$, which does not map to any of the nodes. Then we have the following exact sequence over $X_{\xi}$
\begin{equation}\label{Spectral1}
0\rightarrow L(-R)\rightarrow f^*\mathcal E\rightarrow f^*\mathcal E\otimes \pi^*_{\xi}\omega_{X_0}\rightarrow L\otimes f^*\omega_{X_0}\rightarrow 0.
\end{equation}
\end{lema}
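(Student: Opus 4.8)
The plan is to realise the displayed four-term sequence as the \emph{eigenvalue complex} of the Higgs field pulled back to the spectral curve. Write $\lambda\in H^0(X_{\xi}, f^*\omega_{X_0})$ for the tautological section obtained by restricting the canonical section of the pullback of $\omega_{X_0}$ to the total space $\mathrm{Tot}(\omega_{X_0})$ in which $X_{\xi}$ is embedded (so $\pi^*_{\xi}\omega_{X_0}=f^*\omega_{X_0}$ is the pullback of $\omega_{X_0}$ to $X_\xi$), and set
\begin{equation}
\Psi:=f^*\phi-\lambda\cdot\mathrm{id}\colon f^*\mathcal E\longrightarrow f^*\mathcal E\otimes f^*\omega_{X_0}.
\end{equation}
Since $X_{\xi}$ is cut out by the characteristic polynomial of $\phi$, the determinant of $\Psi$ is this characteristic polynomial evaluated at $\lambda$, which vanishes identically on $X_{\xi}$; hence $\Psi$ has generic rank $n-1$ and both $\ker\Psi$ and $\mathrm{coker}\,\Psi$ are rank-one sheaves on the integral curve $X_{\xi}$. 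The statement then reduces to the two identifications $\mathrm{coker}\,\Psi\cong L\otimes f^*\omega_{X_0}$ and $\ker\Psi\cong L(-R)$, together with exactness in the middle, i.e.\ $\mathrm{im}\,\Psi=\ker(\mathrm{ev}\otimes\mathrm{id})$.

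For the cokernel I would use the very construction of the spectral correspondence: by hypothesis $\mathcal E=f_*L$ and $\phi=f_*(\times\lambda)$, where $\times\lambda\colon L\to L\otimes f^*\omega_{X_0}$ is multiplication by the tautological section. The counit of adjunction $\mathrm{ev}\colon f^*\mathcal E=f^*f_*L\to L$ is surjective because $f$ is finite and flat, and naturality of the counit gives $(\mathrm{ev}\otimes\mathrm{id})\circ f^*\phi=(\times\lambda)\circ\mathrm{ev}$, whence $(\mathrm{ev}\otimes\mathrm{id})\circ\Psi=0$. Thus $\mathrm{ev}\otimes\mathrm{id}\colon f^*\mathcal E\otimes f^*\omega_{X_0}\to L\otimes f^*\omega_{X_0}$ is a surjection onto a line bundle annihilating $\mathrm{im}\,\Psi$; a rank count forces it to be exactly the cokernel of $\Psi$, and middle exactness follows once the quotient $\ker(\mathrm{ev}\otimes\mathrm{id})/\mathrm{im}\,\Psi$ is shown to be torsion-free, a local matter handled below.

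For the kernel I would use the adjugate of $\lambda\cdot\mathrm{id}-f^*\phi$. The identity $(\lambda\cdot\mathrm{id}-f^*\phi)\,\mathrm{adj}(\lambda\cdot\mathrm{id}-f^*\phi)=\det(\lambda\cdot\mathrm{id}-f^*\phi)\cdot\mathrm{id}=0$ on $X_{\xi}$ shows that $\mathrm{adj}$ lands in $\ker\Psi$, and comparison with the classical Beauville--Narasimhan--Ramanan computation on a smooth spectral cover (\cite{15},\cite{6},\cite{16}, and \cite[Proposition 4.1]{9}) identifies the saturated kernel as $L(-R)$: the twist by $-R$ appears precisely because at a ramification point the eigenvalue $\lambda$ has multiplicity at least two, so $\lambda\cdot\mathrm{id}-f^*\phi$ drops to rank at most $n-2$ and the adjugate vanishes to the order recorded by $R$. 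The only new feature beyond the smooth case is the presence of the nodes of the vine curve $X_{\xi}$, and here the hypothesis that $R$ does not meet the nodes is decisive, since it forces $f$ to be unramified (\'etale) near every node. \'Etale-locally at a node the cover splits into trivial sheets, the matrix $\lambda\cdot\mathrm{id}-f^*\phi$ is equivalent to the classical one with a simple eigenvalue, and one reads off directly that $\ker\Psi$ and $\mathrm{coker}\,\Psi$ are line bundles there with no contribution from $R$. Gluing the smooth-locus computation (which produces the $-R$) to the \'etale node computation (which does not) yields $\ker\Psi\cong L(-R)$ and the middle exactness asserted above.

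The main obstacle is exactly the behaviour at the nodes: a priori $\ker\Psi$ and $\mathrm{coker}\,\Psi$ are only rank-one torsion-free sheaves, and one must rule out embedded torsion at the nodes and confirm that they are genuinely the line bundles $L(-R)$ and $L\otimes f^*\omega_{X_0}$. This is where the standing assumptions enter: $X_{\xi}$ is an integral vine curve, $f$ is finite flat of degree $n$ and unramified at the nodes, and $L$ is an honest line bundle with $f_*L$ locally free of rank $n$ (the cyclic-vector condition underlying the spectral correspondence). Granting these, every verification---surjectivity of $\mathrm{ev}$, vanishing of the composites $(\mathrm{ev}\otimes\mathrm{id})\circ\Psi$ and $\Psi\circ\mathrm{adj}$, and the order-of-vanishing count for $\mathrm{adj}$---becomes a local computation at a point of $X_{\xi}$, carried out separately over the ramified smooth points and over the unramified nodes, and the global sequence follows by patching.
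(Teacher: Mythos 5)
Your overall strategy is the same one the paper relies on: the paper gives no argument of its own for this lemma, only the citations to Hitchin, Beauville--Narasimhan--Ramanan, Hurtubise and Dalakov, and your plan --- identify the cokernel via the counit $f^*f_*L\to L$ and naturality of $\phi=f_*(\times\lambda)$, reduce the remaining exactness to local computations, invoke the classical smooth-cover computation away from the nodes, and use the hypothesis that $R$ avoids the nodes to make $f$ \'etale there so that the cover splits into sheets with distinct eigenvalues --- is exactly the intended ``straightforward generalisation.'' The cokernel argument and the node analysis are correct as written.

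There is, however, one step that would fail if carried out literally: the claim that ``at a ramification point the eigenvalue $\lambda$ has multiplicity at least two, so $\lambda\cdot\mathrm{id}-f^*\phi$ drops to rank at most $n-2$ and the adjugate vanishes to the order recorded by $R$.'' For a reduced spectral curve that is smooth at the ramification point (the generic situation assumed here), $\phi$ is \emph{regular} at the branch point --- a single Jordan block per eigenvalue --- so $\lambda\cdot\mathrm{id}-f^*\phi$ has rank exactly $n-1$ there and the adjugate does \emph{not} vanish. (Already for $n=2$ with $\phi=\left(\begin{smallmatrix}0&q\\1&0\end{smallmatrix}\right)$ and spectral curve $y^2=q$, the matrix $y-f^*\phi$ has rank $1$ at $y=q=0$ and its adjugate $\left(\begin{smallmatrix}y&q\\1&y\end{smallmatrix}\right)$ is nonzero there.) A rank drop to $n-2$ would make the fibrewise kernel two-dimensional and would contradict $\ker\Psi$ being a line subbundle. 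The twist by $-R$ does not come from any extra vanishing of the adjugate; it comes from the fact that the composite $\ker\Psi\hookrightarrow f^*\mathcal E\xrightarrow{\ \mathrm{ev}\ }L$ is an injection of line bundles on the integral curve $X_{\xi}$ whose vanishing divisor is exactly $R$ (in the $n=2$ model the kernel is generated by $(y,1)$ and $\mathrm{ev}(y,1)=2y$, which cuts out $R$), equivalently from relative duality $\omega_{X_{\xi}/X_0}\cong\mathcal O(R)\cong f^*\omega_{X_0}^{\otimes(n-1)}$ applied to the dual of the cokernel identification. Replacing your adjugate paragraph by this evaluation-map (or duality) argument repairs the proof; everything else, including the decisive use of unramifiedness at the nodes, stands.
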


\begin{prop}\label{Inter420}
Let $p:=(X_0, \mathcal E, \phi)$ be a Gieseker-Higgs bundle in $h^{-1} ({\xi})$. Then we have the following short exact sequence
\begin{equation}\label{Hype}
0\rightarrow H^1(f_*\mathcal O_{X_{\xi}})\rightarrow \mathbb H^1(\mathcal C_{\bullet})\rightarrow H^0(f_*\omega_{X_{\xi}})\rightarrow 0
\end{equation}
Moreover, $T_{J_{\xi},p} \cong H^1(f_*\mathcal O_{X_{\xi}})$ and $\mathcal N_{J_{X_{\xi}}/\mathcal M_{GHB},p}\cong H^0(f_*\omega_{X_{\xi}})$.
\end{prop}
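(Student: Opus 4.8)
The plan is to read off \eqref{Hype} from the hypercohomology spectral sequence of the two–term deformation complex, after identifying its two cohomology sheaves via the spectral correspondence. Since $p=(X_0,\mathcal{E},\phi)$ lies in the open stratum $\mathcal{M}_{GHB}\setminus\partial\mathcal{M}_{GHB}$, the log–tangent bundle agrees with the ordinary tangent bundle there, so Theorem \ref{Tann} gives $T_{\mathcal{M}_{GHB},p}\cong\mathbb{H}^1(\mathcal{C}_\bullet)$ with $\mathcal{C}_\bullet=[\mathcal{E}nd\,\mathcal{E}\xrightarrow{[\phi,\cdot]}\mathcal{E}nd\,\mathcal{E}\otimes\omega_{X_0}]$ in degrees $0,1$. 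Thus it suffices to compute $\mathcal{H}^0(\mathcal{C}_\bullet)=\ker[\phi,\cdot]$ and $\mathcal{H}^1(\mathcal{C}_\bullet)=\operatorname{coker}[\phi,\cdot]$ and show they are $f_*\mathcal{O}_{X_\xi}$ and $f_*\omega_{X_\xi}$ respectively.

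First I would compute the two cohomology sheaves by abelianisation. Writing $\mathcal{E}\cong f_*L$ for the eigenline bundle $L$ on the vine curve $X_\xi$, with $\phi$ induced by the tautological section, an endomorphism of $\mathcal{E}$ commuting with $\phi$ is exactly an $f_*\mathcal{O}_{X_\xi}$–linear endomorphism of $f_*L$; as $L$ has rank one these are scalars, so $\mathcal{H}^0(\mathcal{C}_\bullet)=\ker[\phi,\cdot]\cong f_*\mathcal{O}_{X_\xi}$. For the cokernel I would use the four–term sequence \eqref{Spectral1} over $X_\xi$: splitting it into two short exact sequences, pushing forward by the finite map $f$, and invoking the anti–self–duality of $[\phi,\cdot]$ under the trace pairing recorded in \eqref{Pair} together with Grothendieck relative duality $f_*\omega_{X_\xi}\cong\mathcal{H}om_{\mathcal{O}_{X_0}}(f_*\mathcal{O}_{X_\xi},\omega_{X_0})$, yields $\mathcal{H}^1(\mathcal{C}_\bullet)=\operatorname{coker}[\phi,\cdot]\cong f_*\omega_{X_\xi}$. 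Conceptually these two identifications are the statement that $\mathcal{C}_\bullet\simeq Rf_*[\mathcal{O}_{X_\xi}\xrightarrow{0}\omega_{X_\xi}]$. The hypothesis that $X_\xi$ is ramified outside the nodes is used here: it makes $f$ étale over the nodes of $X_0$, so that $f_*L$ is locally free and the local spectral picture, hence the duality, is the standard one.

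Next I would feed these into the spectral sequence $E_2^{p,q}=H^p(X_0,\mathcal{H}^q(\mathcal{C}_\bullet))\Rightarrow\mathbb{H}^{p+q}(\mathcal{C}_\bullet)$. Because $X_0$ is a curve, $H^2$ of every sheaf vanishes, so the differential out of $E_2^{0,1}$ lands in $E_2^{2,0}=0$ and the edge sequence collapses to
\[
0\to H^1\!\big(X_0,\mathcal{H}^0(\mathcal{C}_\bullet)\big)\to\mathbb{H}^1(\mathcal{C}_\bullet)\to H^0\!\big(X_0,\mathcal{H}^1(\mathcal{C}_\bullet)\big)\to 0 .
\]
Substituting the two identifications and using $H^i(X_0,f_*\mathcal{F})=H^i(X_\xi,\mathcal{F})$ for the finite map $f$ gives precisely \eqref{Hype}.

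Finally I would match the outer terms with the geometry. The subspace $H^1(f_*\mathcal{O}_{X_\xi})=H^1(X_\xi,\mathcal{O}_{X_\xi})$ is the tangent space to $\operatorname{Pic}(X_\xi)=J_\xi$, and under the inclusion into $\mathbb{H}^1(\mathcal{C}_\bullet)$ it records the deformations of $L$ with $X_\xi$ fixed, i.e. those preserving the characteristic polynomial $\xi$; this is exactly $T_{J_\xi,p}$, so \eqref{Hype} is the normal–bundle sequence of $J_\xi\subset\mathcal{M}_{GHB}$. The quotient is therefore $\mathcal{N}_{J_\xi/\mathcal{M}_{GHB}}\cong H^0(f_*\omega_{X_\xi})=H^0(X_\xi,\omega_{X_\xi})$, and via $f_*\omega_{X_\xi}\cong\bigoplus_{i=1}^n\omega_{X_0}^{\otimes i}$ this is canonically $T_\xi B$, compatibly with the Hitchin differential $dh$. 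I expect the main obstacle to be the sheaf–level identification $\mathcal{H}^1(\mathcal{C}_\bullet)\cong f_*\omega_{X_\xi}$ on the singular curves $X_0$ and $X_\xi$: one must verify that \eqref{Spectral1} and the relative duality behave correctly at the nodes (this is where ``ramified outside the nodes'' is essential), and that the connecting map of the spectral sequence genuinely coincides with $dh$ rather than merely matching dimensions.
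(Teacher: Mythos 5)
Your proposal is correct and follows essentially the same route as the paper: both arguments reduce to identifying the cohomology sheaves of $\mathcal C_{\bullet}$ as $f_*\mathcal O_{X_{\xi}}$ and $f_*\omega_{X_{\xi}}$ via the pushed-forward four-term sequence \eqref{Spectral1}--\eqref{Spectral3}, and then extracting \eqref{Hype} by standard homological algebra (the paper filters $\mathcal C_{\bullet}$ by the subcomplex $[\pi_*\mathcal O_{X_{\xi}}\rightarrow 0]$ and takes the long exact sequence of hypercohomology, which is the same computation as your degenerate hypercohomology spectral sequence on a curve). Your extra remarks on relative duality for the cokernel and on matching the quotient map with $dh$ are harmless refinements of what the paper reads off directly from \eqref{Spectral3}.
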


\begin{proof}
Tensoring the sequence \eqref{Spectral1} with $L^{-1}\otimes \mathcal O(R)$ we get
\begin{equation}\label{Spectral2}
0\rightarrow \mathcal O_{_{X_{\xi}}}\rightarrow \pi^*E\otimes L^{-1}\otimes \mathcal O(R)\rightarrow \pi^*E\otimes \pi^*\omega_{_{X_0}}\otimes L^{-1}\otimes \mathcal O(R)\rightarrow \pi^*\omega_{_{X_0}}\otimes \mathcal O(R) \rightarrow 0
\end{equation}
Notice that the morphism $\pi:X_{\xi}\rightarrow X_0$ is a finite cover. Using Riemann-Hurwitz formula, we see that the push forward (under $\pi$) of the above exact sequence is the same as
\begin{equation}\label{Spectral3}
0\rightarrow \pi_*\mathcal O_{_{X_{\xi}}}\rightarrow \mathcal End E\xrightarrow{[\bullet, \phi]} \mathcal End E\otimes \omega_{_{X_0}}\rightarrow \pi_*\omega_{_{X_{\xi}}} \rightarrow 0
\end{equation}
Using this exact sequence, we can form the following short exact sequence of chains:

\begin{equation}\label{Spectral4}
\begin{tikzcd}
& 0\arrow{d} & 0\arrow{d} \\\
0 \arrow{r} & \pi_*\mathcal O_{_{X_{\xi}}}\arrow{r}\arrow{d} & 0\arrow{r}\arrow{d} & 0\\
0 \arrow{r} & {\mathcal End}~~ E \arrow{r}{[\bullet, \phi]} \arrow{d}{[\bullet, \phi]} & {\mathcal End}~~ E\otimes \omega_{_{X_0}}\arrow{r}\arrow{d} & 0\\
0\arrow{r}& {Im}~~ ({[\bullet, \phi]}) \arrow{r}\arrow{d} & {\mathcal End}~~ E\otimes \omega_{_{X_0}}\arrow{r}\arrow{d} & 0\\
& 0 & 0
\end{tikzcd}
\end{equation}
The last chain is quasi-isomorphic to the chain 
\begin{equation}\label{eq20225}
0\rightarrow 0\rightarrow \pi_*\omega_{_{X_{\xi}}} \rightarrow 0.
\end{equation}
This follows from the following short exact sequence of complexes:

\begin{equation}
\begin{tikzcd}
0\arrow{r} & {Im}~~({[\bullet, \phi]})\arrow["="]{r}\arrow["="]{d}& {Im}~~({[\bullet, \phi]})\arrow{r}\arrow{d}& 0\arrow{r}\arrow{d} & 0\\
0\arrow{r} & {Im}~~({[\bullet, \phi]})\arrow{r}& {\mathcal End}~~E\otimes \omega_{_{X_0}}\arrow{r}& \pi_*\omega_{_{X_{_{\xi}}}}\arrow{r} & 0
\end{tikzcd}
\end{equation}
Since the first vertical complex from the left is quasi-isomorphic to $0$, the vertical complex in the middle and the first vertical complex from the right are quasi-isomorphic.

Now from \ref{Spectral4} and using the long exact sequence of hypercohomology, we get equation \ref{Hype}.
The tangent space $T_{_{p}} J_{_{\xi}}\cong H^1(X_{_{\xi}}, \mathcal O_{_{X_{\xi}}})\cong H^1(X_{_{0}}, \pi_*\mathcal O_{_{X_{\xi}}})$. Therefore, $\mathcal N_{_{J_{_{X_{\xi}}}|\mathcal M_{_{{GHB}}}}}\cong H^0(\pi_*\omega_{_{X_{_{\xi}}}})$.
\end{proof}
\vspace{-.2em}
\begin{thm}
The generic fiber $h^{-1} (\xi)$ is Lagrangian in a symplectic leaf for the log-symplectic structure on $\mathcal M_{GHB}$. Therefore the Hitchin map $h: \mathcal M_{GHB}\rightarrow B:=\oplus_{i=1}^n H^0(X_0, \omega_{X_0}^{\otimes i})$ is an algebraically completely integrable system (\ref{ACIS}). 
\end{thm}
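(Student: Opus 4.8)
The plan is to reduce the statement to the assertion that the smooth part of a generic Hitchin fibre is a Lagrangian subvariety of the open dense symplectic leaf of $\mathcal M_{GHB}$, and then to verify the two defining properties of a Lagrangian subvariety — being of middle dimension and being isotropic — directly from Proposition \ref{Inter420}.

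First I would set $S:=\mathcal M_{GHB}\setminus\partial\mathcal M_{GHB}$. By Theorem \ref{Foliation11} this is the unique dense open symplectic leaf: it is a smooth Poisson variety of dimension $2(n^2(g-1)+1)$ on which the log-symplectic form $\omega$ restricts to an ordinary symplectic form. For a general $\xi\in B$, corresponding to a vine spectral curve $f:X_{\xi}\rightarrow X_0$ ramified outside the nodes, the quasi-abelianization description \cite[Theorem 8.16]{3} identifies $h^{-1}(\xi)\cap S$ with the Picard variety $J_{\xi}$ of $X_{\xi}$, a smooth dense open subset of $h^{-1}(\xi)$; in particular the points of this locus are honest Higgs bundles $(X_0,\mathcal E,\phi)$ on the nodal curve itself. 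By Definition \ref{Lagrangian} it then suffices to prove that $J_{\xi}$ is Lagrangian in the symplectic leaf $S$, and the algebraically completely integrable structure of Definition \ref{ACIS} follows at once, since this holds over the complement of a proper closed subvariety of $B$.

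For the dimension count I would use the short exact sequence \eqref{Hype} of Proposition \ref{Inter420}, together with the identifications $T_{J_{\xi},p}\cong H^1(f_*\mathcal O_{X_{\xi}})$ and $T_pS\cong\mathbb H^1(\mathcal C_{\bullet})$ given there. Serre duality on the nodal spectral curve $X_{\xi}$ yields $H^0(f_*\omega_{X_{\xi}})\cong H^1(f_*\mathcal O_{X_{\xi}})^{\vee}$, so the two outer terms of \eqref{Hype} have equal dimension and $\dim T_{J_{\xi},p}=\frac{1}{2}\dim\mathbb H^1(\mathcal C_{\bullet})=\frac{1}{2}\dim S$. Hence $J_{\xi}$ is of middle dimension, and it remains to establish isotropy.

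The heart of the argument is the vanishing $\omega|_{T_{J_{\xi},p}}=0$. My approach is to observe that the subspace $H^1(f_*\mathcal O_{X_{\xi}})\subset\mathbb H^1(\mathcal C_{\bullet})$ arises, via the exact sequence \eqref{Spectral3}, as the image of the edge map $H^1(\mathcal H^0(\mathcal C_{\bullet}))\rightarrow\mathbb H^1(\mathcal C_{\bullet})$, where $\mathcal H^0(\mathcal C_{\bullet})=f_*\mathcal O_{X_{\xi}}$ is the centralizer of $\phi$ appearing as the kernel term in \eqref{Spectral3}. Such classes are represented by Čech cocycles of the form $(s_{ij},0)$, with $s_{ij}$ commuting with $\phi$ and with vanishing Higgs component — these are precisely the first-order deformations that move the line bundle on the fixed spectral curve $X_{\xi}$. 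Granting this representation, the cocycle formula \eqref{Pyaar2} for the symplectic pairing (valid on the nodal curve $X_0$ by Remark \ref{Liou441}) gives, for two such classes,
\[
\omega\big((s_{ij},0),(s'_{ij},0)\big)=Trace\big(s_{ij}\circ 0-0\circ s'_{ij}\big)=0,
\]
so $T_{J_{\xi},p}$ is isotropic and, being of middle dimension, Lagrangian. I expect the main obstacle to be the rigorous justification that the tangent directions along $J_{\xi}$ carry vanishing Higgs component — equivalently, that the spectral deformation classes are compatible with the self-duality isomorphism \eqref{Pair} of $\mathcal C_{\bullet}$, so that $\omega^{\#}$ carries $H^1(f_*\mathcal O_{X_{\xi}})$ exactly onto its annihilator $H^0(f_*\omega_{X_{\xi}})^{\vee}$. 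This is the content of the spectral correspondence on the nodal curve, and it is where care over the gluing of the spectral data at the two branches over the node, following \cite{23} and \cite{22}, is needed.
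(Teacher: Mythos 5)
Your proposal is correct and follows essentially the same route as the paper: both rest on Proposition \ref{Inter420} and the self-duality \eqref{Pair} of $\mathcal C_{\bullet}$, your explicit \v{C}ech computation $\omega\big((s_{ij},0),(s'_{ij},0)\big)=0$ being the cocycle-level version of the paper's proof that the composite $H^0(\pi_*\omega_{_{X_{\xi}}})^{*}\rightarrow H^0(\pi_*\omega_{_{X_{\xi}}})$ induced by the morphism of complexes vanishes. The ``main obstacle'' you flag at the end is already settled by the subcomplex inclusion $(\pi_*\mathcal O_{_{X_{\xi}}}\rightarrow 0)\hookrightarrow\mathcal C_{\bullet}$ appearing in \eqref{Spectral4}: a class coming from $H^1(\pi_*\mathcal O_{_{X_{\xi}}})$ is by construction represented by a cocycle $(s_{ij},0)$ with $s_{ij}$ in the centraliser of $\phi$, so no further compatibility check with the spectral correspondence is needed.
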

\vspace{-.2em}
\begin{proof}
By definition \ref{Lagrangian}, it is enough to show that the isomorphism $\sigma^{\flat}: \mathbb H^1(\mathcal C_{_{\bullet}}^{\vee})\rightarrow \mathbb H^1(\mathcal C^{}_{_{\bullet}})$ maps $\mathcal N^{^{\vee}}_{_{J_{_{X_{\xi}}}|\mathcal M_{_{{GHB}}}}}$ to $T_{_{p}} J_{_{\xi}}$.
To see this, consider the following diagram:
\begin{equation}
\begin{tikzcd}
0\arrow{r} & H^1(\pi_*\mathcal O_{_{X_{\xi}}}) \arrow{r} & \mathbb H^1(\mathcal C_{_{\bullet}})\arrow{r} & H^0(\pi_*\omega_{_{X_{\xi}}})\arrow{r} & 0\\
0\arrow{r} & H^0(\pi_*\omega_{_{X_{\xi}}})^* \arrow{r} & \mathbb H^1(\mathcal C^{^{\vee}}_{_{\bullet}})\arrow["\sigma^{\flat}"]{u}\arrow{r} & H^1(\pi_*\mathcal O_{_{X_{\xi}}})^*\arrow{r} & 0
\end{tikzcd}
\end{equation}
Since $\mathcal N_{_{J_{_{X_{\xi}}}|\mathcal M_{_{{GHB}}}}}\cong H^0(\pi_*\omega_{_{X_{\xi}}})$ and $T_{_{p}} J_{_{\xi}} \cong H^1(\pi_*\mathcal O_{_{X_{\xi}}})$ it is enough to show that $\sigma^{\flat}(H^0(\pi_*\omega_{_{X_{\xi}}})^*)\subseteq H^1(\pi_*\mathcal O_{_{X_{\xi}}})$. Since the horizontal short exact sequences are exact, it is enough to show that the composite map 

\begin{equation}\label{comp420}
H^0(\pi_*\omega_{_{X_{\xi}}})^*\rightarrow H^0(\pi_*\omega_{_{X_{\xi}}})
\end{equation}

is $0$. Notice that $H^0(\pi_*\omega_{_{X_{\xi}}})^*\cong H^1(\pi_*\mathcal O_{X_{\xi}})$ and also recall from the proof of proposition \ref{Inter420} that the complex
\begin{equation}
\pi_*\mathcal O_{X_{\xi}}\rightarrow 0
\end{equation}
is quasi-isomorphic to the complex \eqref{eq20225}.
Now it is clear that the morphism \eqref{comp420} is the same as the composite of the morphisms induced on the $\mathbb H^1$'s of the following morphism of complexes.
\begin{equation}
\begin{tikzcd}
\pi_*\mathcal O_{_{X_{\xi}}}\arrow{r} \arrow{d} & \mathcal End E \arrow{d}{-[\bullet, \phi]}\arrow["\mathbb 1"]{r} & \mathcal End E\arrow{d}{[\bullet, \phi]}\arrow{r} & {Im}([\bullet, \phi])\arrow{d}\\
0\arrow{r} & \mathcal End E\otimes \omega_{_{X_{_0}}} \arrow["-\mathbb 1\otimes \mathbb 1"]{r} & \mathcal End E\otimes \omega_{_{X_{_0}}} \arrow{r}& \mathcal End E\otimes \omega_{_{X_{_0}}}
\end{tikzcd}
\end{equation}

Since the composition is $0$, therefore the morpshim $H^0(\pi_*\omega_{_{X_{\xi}}})^*\rightarrow H^0(\pi_*\omega_{_{X_{\xi}}})$ is also $0$.
\end{proof}

\end{document}